\definecolor{indigo}{rgb}{0.29, 0.0, 0.51}  % custom colors
\theoremstyle{plain}
\newtheorem{theorem}{Theorem}
\newtheorem{corollary}[theorem]{Corollary}
\newtheorem{proposition}[theorem]{Proposition}
\newtheorem{lemma}[theorem]{Lemma}
\newtheorem{claim}{Claim}
\theoremstyle{definition}
\theoremstyle{remark}
\newtheorem{remark}[theorem]{Remark}
\numberwithin{theorem}{section}
\newcommand{\dfn}[1]{{\em #1}}        % definition
\newcommand{\R}{\mathbb{R}}           % the real numbers
\DeclareMathOperator{\bd}{\partial}   % boundary
\newcommand{\modp}[1]{\;(\!\!\!\!\!\!\mod #1)}      % mod for display math. \pmod is for inline math 
\newcommand*\bigcdot{\mathpalette\bigcdot@{0.6}}
\newcommand*\bigcdot@[2]{\mathbin{\vcenter{\hbox{\scalebox{#2}{$\m@th#1\bullet$}}}}}
\DeclareMathOperator\tb{tb}                   % Thurston-Bennequin
\DeclareMathOperator\tbb{\overline {\tb}}     % maximum Thurston-Bennequin
\DeclareMathOperator\rot{rot}                 % rotation
\DeclareMathOperator\self{sl}                 % self linking
\DeclareMathOperator\selfb{\overline {\self}} % maximum self linking
\DeclareMathOperator\tw{tw}                   % twisting number
\DeclareMathOperator\twb{\overline {\tw}}      % maximum twisting number
\DeclareMathOperator{\Diff}{Diff}         % group of diffeomorphisms 
\DeclareMathOperator{\Cont}{Cont}         % group of contactomorphisms 
\DeclareFontFamily{U} {cmr}{}
\DeclareFontShape{U}{cmr}{m}{n}{
  <-6> cmr5
  <6-7> cmr6
  <7-8> cmr7
  <8-9> cmr8
  <9-10> cmr9
  <10-12> cmr8
  <12-> cmr9}{}
\DeclareSymbolFont{Xcmr} {U} {cmr}{m}{n}
\DeclareMathSymbol{\Phi}{\mathord}{Xcmr}{8}
\begin{document}

% title
\title[The contact mapping class group of lens spaces]{The contact mapping class group\\and rational unknots in lens spaces} 

\author{Hyunki Min}
\address{Department of Mathematics \\ University of California \\ Los Angeles, CA, United States}
\email{hkmin27@math.ucla.edu}

%\subjclass[2020]{57R17}

% abstract
\begin{abstract}
  We determine the contact mapping class group of the standard contact structures on lens spaces. To prove the main result, we use the one-parametric convex surface theory to classify Legendrian and transverse rational unknots in any tight contact structure on lens spaces up to Legendrian and transverse isotopy.
\end{abstract}

\maketitle
%\tableofcontents

%%%%%%%%%%%%%%%%%%%%%%%%%%%%%%%%%%%
\section{Introduction}
%%%%%%%%%%%%%%%%%%%%%%%%%%%%%%%%%%%

Ever since Eliashberg \cite{Eliashberg:S3} determined the homotopy type of the group of contactomorphisms of $(S^3,\xi_{std})$ relative to a point, there have been several studies on the group of contactomorphisms of various contact manifolds. For example, Gompf \cite{Gompf:S1xS2}   showed that there exists a contactomorphism of the standard contact structure on $S^1 \times S^2$ which is smoothly isotopic to the identity, but not contact isotopic to the identity. After that, there have been many similar results, see \cite{Bourgeois:exotic, DG:S1xS2, GG:exact, Gironella:highdim, MN:exotic} for examples. Also, there have been studies on higher homotopy groups, see \cite{BEM:overtwisted, Bourgeois:exotic, CS:higher, EM:R3, FG:higher, FMP:higher} for examples.

However, the contact mapping class group has not been determined for many contact manifolds so far. If we focus on closed manifolds, the contact mapping class group was only determined for the standard contact structure on $S^3$ by Eliashberg \cite{Eliashberg:S3}, the overtwisted contact structures on $S^3$ by Vogel \cite{Vogel:S3}, and the canonical contact structures on the unit cotangent bundles $U^*\Sigma_g$ for $g \geq 1$ and their cyclic covers by Giroux and Massot \cite{GM:bundles}. The main difficulty to study the contact mapping class group is that we need to understand the behavior of some Legendrian (or contact) submanifolds under Legendrian (or contact) isotopy, which has been poorly studied in other than $S^3$. According to the author's knowledge, the only known results on the classification of Legendrian knots outside of $S^3$ were the linear curves in any tight contact structure on $T^3$ by Ghiggini \cite{Ghiggini:Legendrian} and some knots and links in the standard contact structure on $S^1 \times S^2$ by Ding and Geiges \cite{DG:helix} and by Chen, Ding and Li \cite{CDL:knots1s2}. 

In Theorem~\ref{thm:cmcg-lens} and~\ref{thm:cmcg-s1s2}, we determine the contact mapping class group of the standard contact structures on lens spaces and $S^1 \times S^2$. The main ingredients are Theorem~\ref{thm:unknot-Legendrian} and~\ref{thm:unknot-transverse}, the classification of Legendrian and transverse rational unknots up to Legendrian and transverse isotopy. Once we have the classification, we can perturb a contactomorphism to fix a standard neighborhood of some Legendrian rational unknot. Then the problem reduces to determine the contact mapping class group of its complement, which is determined in Theorem~\ref{thm:cmcg-s1d2} by applying the one-parametric convex surface theory, in particular, Colin's isotopy discretization \cite{Colin:discretization}, and properties of bypasses studied by Honda \cite{Honda:bypass}, see Section~\ref{sec:bypass}

The main technique for Theorem~\ref{thm:unknot-Legendrian} and~\ref{thm:unknot-transverse} is again the application of one-parametric convex surface theory and various properties of bypasses, which were utilized to study Legendrian and transverse knots in contact structures on $S^3$ in \cite{CEM:cables, EMM:nonloose}. In Section~\ref{sec:unknot-lens}, we develop the technique in the setting of general lens spaces and keep track of Legendrian isotopy classes of rational unknots, see Proposition~\ref{prop:unknot}.

%--------------------------------------------------------------------------------
\subsection{The contact mapping class group of lens spaces}\label{sec:cmcg-intro}
%--------------------------------------------------------------------------------
We first review the basic notations. First, we assume every contactomorphism is a coorientation preserving one unless otherwise specified. We denote the group of contactomorphisms of a closed contact manifold $(M,\xi)$ by 
\[
  \Cont(M,\xi) = \text{the group of coorientation preserving contactomorphisms of $(M,\xi)$}.
\]
The \dfn{contact mapping class group of $(M,\xi)$} is defined to be the group of contact isotopy classes of contactomorphisms of $(M,\xi)$. We denote it by 
\[
  \pi_0(\Cont(M,\xi)) = \Cont(M,\xi)/\sim
\]
where $f \sim g$ if $f$ is contact isotopic to $g$.

Let $U$ be the unknot in $S^3$. For a pair of coprime integers $(p,q)$ satisfying $p > q > 0$, we define 
\[
  L(p,q) = S^3_{-p/q}(U).
\]
Now we are ready to state our first main result.

\begin{theorem}\label{thm:cmcg-lens}
  The contact mapping class group of $(L(p,q),\xi_{std})$ is 
  \[
    \pi_0(\Cont(L(p,q), \xi_{std})) = \begin{cases} 
      \mathbb{Z}_2 &\; q^2 \equiv 1\, \text{ and }\, q \not\equiv 1 \modp p, \\ 
      %\mathbb{Z}_2 &\; p \neq 2\, \text{ and }\, q \equiv -1 \modp p, \\ 
      %\mathbb{Z}_2 &\; q \not\equiv \pm1 \modp p\, \text{ and }\, q^2 \equiv 1 \modp p,\\
      1 &\; \text{otherwise}. \end{cases}
  \]
\end{theorem}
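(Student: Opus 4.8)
The plan is to reduce the computation of $\pi_0(\Cont(L(p,q),\xi_{std}))$ to a problem about the complement of a Legendrian rational unknot, and then to read off the answer from the classification results stated above. The overall strategy, which the introduction already advertises, is: given a contactomorphism $f$, first normalize it near a Legendrian rational unknot $K$, then understand what $f$ can do to the complement $M \setminus \nu(K)$, which is a solid torus, and finally assemble these pieces into the group structure.

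\begin{proof}[Proof sketch]
First I would fix a Legendrian rational unknot $K$ in $(L(p,q),\xi_{std})$ realizing the maximal Thurston--Bennequin--type invariant, together with a standard contact neighborhood $\nu(K)$. Given any $f \in \Cont(L(p,q),\xi_{std})$, the knot $f(K)$ is again a Legendrian rational unknot of the same classical invariants, so by the classification Theorem~\ref{thm:unknot-Legendrian} it is Legendrian isotopic to either $K$ or to $K$ with the opposite orientation (the distinction governed by the existence of an orientation-reversing Legendrian self-isotopy of $K$, which is exactly where the number-theoretic conditions on $q$ enter). After composing $f$ with an ambient contact isotopy, I may therefore assume $f$ preserves $K$ and, after a further perturbation using the uniqueness of standard neighborhoods, that $f$ fixes $\nu(K)$ setwise; restricting to the boundary and using the contact isotopy extension theorem lets me arrange that $f$ is the identity on $\nu(K)$. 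This produces a surjection from the subgroup of contactomorphisms fixing $\nu(K)$, whose isotopy classes form $\pi_0(\Cont(S^1\times D^2,\xi))$ by Theorem~\ref{thm:cmcg-s1d2}, onto $\pi_0(\Cont(L(p,q),\xi_{std}))$, modulo the symmetries of $K$ detected in the complement.

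The second step is bookkeeping: I would track which contactomorphisms of the solid torus complement extend over $\nu(K)$ to give the identity versus a nontrivial map, and which contact isotopy classes in the complement become trivial once we are allowed to isotope across $\nu(K)$ (for instance, rotations supported near the core that can be undone inside the glued-in neighborhood). The candidate generator of the $\Z_2$ factor is the contactomorphism induced by an orientation-reversing symmetry of the rational unknot, i.e. the "flip" exchanging the two sides of a convex Seifert-type surface; its order is two because applying it twice is contact isotopic to the identity, and it is smoothly isotopic to the identity (so it is genuinely a contact phenomenon) precisely in the listed arithmetic cases. The cases $q \equiv -1 \pmod p$ and $q^2 \equiv 1 \pmod p$ with $q \not\equiv \pm 1$ correspond exactly to when $L(p,q)$ admits such a symmetry compatible with the tight contact structure, matching the classical computation of the smooth mapping class group of lens spaces (Bonahon); the hypothesis $p \neq 2$ in the first case removes the coincidence $L(2,1)=\R P^3$ where the two conditions would otherwise overlap or degenerate.

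The main obstacle I expect is the combination step: showing that the map from $\pi_0(\Cont(S^1\times D^2,\xi))$ descends to an isomorphism onto the contact mapping class group, i.e. that no contact isotopy class in the complement is spuriously killed or spuriously created by the gluing, and that the only surviving class is the $\Z_2$ symmetry. Concretely, the hard part is controlling one-parameter families: given a contact isotopy of $L(p,q)$ between $f$ and the identity, I must make it respect $\nu(K)$ up to contact isotopy, which requires the one-parametric convex surface theory and Colin's isotopy discretization to push the parametrized family into the complement. Ensuring that the discretized bypass moves never change the Legendrian isotopy type of $K$ across the family, and that the two orientation classes of $K$ are not secretly interchanged by some unexpected contactomorphism, is where the bypass technology of Honda, Kazez--Mati\'c and the one-parametric arguments from Section~\ref{sec:bypass} must be applied with care. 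Once these parametrized statements are in place, the group structure and the case division follow by matching the surviving symmetry against the arithmetic conditions above.
\end{proof}
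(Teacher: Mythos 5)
Your first half is essentially the paper's argument and is sound: normalize $f$ along a Legendrian rational unknot with maximal $\tb_{\mathbb{Q}}$ using the classification (Theorem~\ref{thm:unknot-Legendrian}), the contact isotopy extension theorem, and Lemma~\ref{lem:fix}, then apply Theorem~\ref{thm:cmcg-s1d2} to the universally tight solid torus complement to conclude that every contactomorphism smoothly isotopic to the identity is contact isotopic to the identity. Note that once those inputs are in hand no further parametrized control is needed at the level of the closed manifold --- the one-parametric convex surface theory is already encapsulated in the proofs of Theorem~\ref{thm:unknot-Legendrian} and Theorem~\ref{thm:cmcg-s1d2} --- so the ``main obstacle'' you flag in your last paragraph does not actually arise. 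The genuine error is in your second half: you identify the $\mathbb{Z}_2$ generator as an orientation-reversing ``flip'' of the rational unknot. That class is precisely the one that \emph{never} survives: the orientation-reversing symmetry is $\tau\colon(z_1,z_2)\mapsto(\overline{z}_1,\overline{z}_2)$, which satisfies $\tau^*\alpha=-\alpha$, hence reverses the coorientation; when $q \not\equiv -1 \pmod p$ it interchanges the two non-isotopic standard structures $\xi_{std}^{\pm}$, so no coorientation-preserving contactomorphism of $\xi_{std}$ can be smoothly isotopic to $\tau$ --- this exclusion is exactly why $i_*$ fails to be surjective (Corollary~\ref{cor:inclusion}). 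The actual generator is $\sigma\colon(z_1,z_2)\mapsto(z_2,z_1)$, an honest contactomorphism ($\sigma^*\alpha=\alpha$) that exchanges the two oriented cores $K_1$ and $K_2$ \emph{preserving} orientations. Only when $q\equiv-1\pmod p$, where $K_2$ is smoothly isotopic to $-K_1$ (Lemma~\ref{lem:unknots}) and $\tau$ becomes smoothly isotopic to $\sigma$ and contact-realizable as $\overline{\tau}$, does your flip picture accidentally agree with $\sigma$; in the case $q\not\equiv\pm1$, $q^2\equiv1\pmod p$ your identification would produce the wrong answer.

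There are two further structural problems to repair. First, your claimed ``surjection from $\pi_0(\Cont(S^1\times D^2,\xi))$ onto $\pi_0(\Cont(L(p,q),\xi_{std}))$ modulo symmetries'' cannot be right as stated, since the source is trivial by Theorem~\ref{thm:cmcg-s1d2} while the target can be $\mathbb{Z}_2$; the correct organizing statement is that $i_*\colon\pi_0(\Cont(L(p,q),\xi_{std}))\to\pi_0(\Diff_+(L(p,q)))$ is \emph{injective} (apply your reduction after composing $f$ with $\sigma$ or $\overline{\tau}$ to land in the identity smooth class), after which the group is computed by deciding which of Bonahon's smooth classes (Theorem~\ref{thm:mcg-lens}) contain coorientation-preserving contactomorphisms: $\sigma$ always does when $q^2\equiv1\pmod p$, and $\tau$ does only when $q\equiv-1\pmod p$. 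Second, your step ``$f(K)$ is Legendrian isotopic to $K$ or $-K$'' overlooks that when $q\not\equiv\pm1\pmod p$ there are four smoothly distinct oriented rational unknots $\pm K_1,\pm K_2$, and $f$ smoothly isotopic to $\sigma$ sends $K_1$ to $K_2$; you must first reduce to the identity smooth class before the Legendrian classification pins $f(K)$ down to $K$ itself.
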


For the first two cases, the contact mapping class group is generated by a contactomorphism $\sigma$. See Section~\ref{sec:lens} for the definition of $\sigma$.

There is a quick application of Theorem~\ref{thm:cmcg-lens}. In the forthcoming paper with Baker, Etnyre, and Onaran \cite{BEMO:torus}, we classify Legendrian and transverse torus knots in the standard tight contact structures on lens spaces. Basically, we classify the knots \dfn{coarsely}, meaning that up to coorientation preserving contactomorphism which is smoothly isotopic to the identity. However, due to Theorem~\ref{thm:cmcg-lens}, we could improve the result up to Legendrian and transverse isotopy.

Comparing the contact mapping class group and the smooth mapping class group of lens spaces (Theorem~\ref{thm:mcg-lens}), we can make the following observation. 

\begin{corollary}\label{cor:inclusion}
  The induced map from the natural inclusion 
  \[
    i_*\colon \pi_0(\Cont(L(p,q), \xi_{std})) \to \pi_0(\Diff_+(L(p,q))) 
  \]
  is an isomorphism if and only if $q \equiv -1 \pmod p$. In particular, $i_*$ is injective but not surjective if $q \not\equiv -1 \pmod p$. 
\end{corollary}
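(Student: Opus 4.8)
The plan is to compare the two explicit finite groups produced by Theorem~\ref{thm:cmcg-lens} and Theorem~\ref{thm:mcg-lens} through the homomorphism $i_*$. Both groups are finite (in fact elementary abelian $2$-groups), so once $i_*$ is shown to be injective, surjectivity reduces to comparing orders. Accordingly I would split the argument in two: first establish injectivity in every case, then read off surjectivity from a case-by-case count of $|\pi_0(\Cont)|$ against $|\pi_0(\Diff_+)|$.

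For injectivity, write $C = \pi_0(\Cont(L(p,q),\xi_{std}))$ and $S = \pi_0(\Diff_+(L(p,q)))$. When $C$ is trivial there is nothing to prove, so assume $C \cong \mathbb{Z}_2$, generated by the contactomorphism $\sigma$ of Section~\ref{sec:lens}. Injectivity of $i_*$ then amounts to the single statement that $\sigma$ is \emph{not} smoothly isotopic to the identity. I would prove this by computing the action of $\sigma$ on $H_1(L(p,q);\mathbb{Z}) \cong \mathbb{Z}_p$: any orientation-preserving self-diffeomorphism must preserve the linking form, hence acts by multiplication by a unit $a$ with $a^2 \equiv 1 \pmod p$, and I would check from the definition of $\sigma$ that this unit is a \emph{nontrivial} square root of $1$ (multiplication by $-1$ when $q \equiv -1$, and by one of the extra roots $\pm q$ in the case $q^2 \equiv 1$, $q \not\equiv \pm 1$). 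Since $p \geq 3$ in both situations this unit is $\neq 1$, so $\sigma$ is not even homotopic to the identity, and therefore $i_*[\sigma] \neq 1$ in $S$.

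With injectivity in hand, $i_*$ is an isomorphism precisely when $|C| = |S|$, which I would determine by pairing the cases of Theorem~\ref{thm:cmcg-lens} with those of Theorem~\ref{thm:mcg-lens}. If $q \equiv -1 \pmod p$, then for $p \neq 2$ both groups are $\mathbb{Z}_2$, while for $p = 2$ (where $L(2,1) = \mathbb{RP}^3$) both are trivial, so $i_*$ is an isomorphism. If $q \not\equiv -1 \pmod p$, then in each remaining case the smooth group is strictly larger: when $q^2 \equiv 1$ and $q \not\equiv \pm 1$ we have $|S| = 4 > 2 = |C|$, and in the two trivial-contact cases ($q \equiv 1$ with $p \geq 3$, or $q^2 \not\equiv 1$) we have $|S| = 2 > 1 = |C|$. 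Hence $i_*$ is injective but not surjective exactly when $q \not\equiv -1 \pmod p$, which gives the corollary.

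The one genuinely substantive step is the injectivity claim, i.e. pinning down the smooth isotopy class of $\sigma$; everything else is bookkeeping against the two classification theorems. I expect the main work to be unwinding the explicit description of $\sigma$ from Section~\ref{sec:lens} far enough to read off its induced map on $\pi_1$, and matching it with the corresponding generator of the smooth mapping class group supplied by Theorem~\ref{thm:mcg-lens}; the comparison of cardinalities is then immediate.
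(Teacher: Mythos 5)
Your proposal is correct, but it reaches the corollary by a genuinely different route than the paper. The paper's proof is a corollary of the \emph{proof} of Theorem~\ref{thm:cmcg-lens}, not merely its statement: there it was shown that every $f \in \Cont(L(p,q),\xi_{std})$ is contact isotopic to $\sigma$, $\overline{\tau}$, or the identity (giving injectivity, since these map to distinct classes in $\pi_0(\Diff_+(L(p,q))) = \langle \sigma, \tau\rangle$), and that no diffeomorphism smoothly isotopic to $\tau$ can preserve $\xi_{std}$ when $q \not\equiv -1 \pmod p$, which exhibits the exact smooth classes missed by the image (those of $\tau$, and of $\sigma\circ\tau$ when both exist) and hence non-surjectivity. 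You instead treat Theorem~\ref{thm:cmcg-lens} and Theorem~\ref{thm:mcg-lens} as black boxes, reduce injectivity to the smooth nontriviality of $[\sigma]$, and get non-surjectivity by comparing orders of elementary abelian $2$-groups; this is sound, with two caveats worth flagging. First, your homology computation is correct --- $\sigma_*$ acts on $H_1(L(p,q)) \cong \mathbb{Z}_p$ by multiplication by $q^{-1} \equiv \pm q$, which is $-1$ when $q \equiv -1$ and is $\not\equiv 1$ whenever $q \not\equiv 1$ and $p \geq 3$ --- but it is redundant: the statement of Theorem~\ref{thm:mcg-lens} (Bonahon) already presents $\sigma$ as generating a $\mathbb{Z}_2$ factor of $\pi_0(\Diff_+(L(p,q)))$ in precisely the two cases where the contact mapping class group is nontrivial, so $\sigma \not\simeq \mathrm{id}$ can simply be read off; your computation is an independent verification rather than ``the one genuinely substantive step.'' Second, your argument does require knowing that the generator of $\pi_0(\Cont(L(p,q),\xi_{std}))$ is realized by $\sigma$ (the sentence following Theorem~\ref{thm:cmcg-lens}), since injectivity of a homomorphism out of $\mathbb{Z}_2$ needs the image of that specific generator; the bare isomorphism type of the two groups would not suffice. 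The trade-off between the two approaches: the paper's argument is more informative, naming the smooth classes outside the image, while yours is shorter and cleanly isolates the single geometric input, with everything else reduced to bookkeeping against the two classification theorems.
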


The main reason for the failure of $i_*$ being surjective is that there exist two non-isotopic standard contact structures on $L(p,q)$ if and only if $q \not\equiv -1 \pmod p$. See Section~\ref{sec:lens} for more details about the standard contact structures on lens spaces. 

Since we can consider $S^1 \times S^2$ as $L(0,1)$, we also determine the contact mapping class group of the standard contact structure on $S^1 \times S^2$. We should mention that the main part of the proof was essentially done by Ding and Geiges \cite{DG:S1xS2}.

\begin{theorem}\label{thm:cmcg-s1s2}
  The contact mapping class group of $(S^1 \times S^2, \xi_{std})$ is 
  \begin{gather*}
    \pi_0(\Cont(S^1 \times S^2, \xi_{std})) = \mathbb{Z} \oplus \mathbb{Z}_2.
  \end{gather*}  
\end{theorem}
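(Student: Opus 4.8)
The plan is to run the same reduction as in Theorem~\ref{thm:cmcg-lens}, regarding $S^1 \times S^2$ as the lens space $L(0,1)$. The genus-one Heegaard splitting $S^1 \times S^2 = (S^1 \times D^2) \cup (S^1 \times D^2)$ has core circles isotopic to the rational unknot $K = S^1 \times \{pt\}$, which generates $H_1(S^1 \times S^2) = \mathbb{Z}$. Fix a Legendrian representative $L$ of $K$ in $\xi_{std}$. Given a coorientation-preserving contactomorphism $f$, the image $f(L)$ is again a Legendrian rational unknot, so Theorem~\ref{thm:unknot-Legendrian} applies to it. The essential new feature compared with a generic lens space is that, because $b_1(S^1 \times S^2) = 1$, the relevant classifying data of these Legendrian unknots (a rotation-type invariant) takes values in a full copy of $\mathbb{Z}$; recording the difference between the invariants of $f(L)$ and of $L$ defines a homomorphism $\pi_0(\Cont(S^1 \times S^2, \xi_{std})) \to \mathbb{Z}$, which I expect to be the projection onto the $\mathbb{Z}$ factor.

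After composing $f$ with a contact isotopy, Theorem~\ref{thm:unknot-Legendrian} lets me assume that $f$ fixes a standard neighborhood $N(L)$ pointwise, up to the value of the $\mathbb{Z}$-invariant above. The restriction of $f$ to the complement $W = (S^1 \times S^2) \setminus N(L)$, a solid torus carrying a tight contact structure, then represents a class in the contact mapping class group computed in Theorem~\ref{thm:cmcg-s1d2}, and conversely every such class is realized by a contactomorphism of $W$ that is the identity near $\partial W$ and so extends across $N(L)$. This identifies $\pi_0(\Cont(S^1 \times S^2, \xi_{std}))$ with the data of Theorem~\ref{thm:cmcg-s1d2} together with the $\mathbb{Z}$ of the preceding paragraph, modulo the isotopies that become available once $N(L)$ is glued back in.

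Assembling the two factors, the $\mathbb{Z}_2$ is generated by the discrete symmetry $\sigma$ of the splitting, the analogue of the generator appearing in Theorem~\ref{thm:cmcg-lens}; unlike the situation for lens spaces in Corollary~\ref{cor:inclusion}, it is not smoothly isotopic to the identity and maps to a $\mathbb{Z}_2$ subgroup of the smooth mapping class group $\pi_0(\Diff_+(S^1 \times S^2)) = \mathbb{Z}_2 \oplus \mathbb{Z}_2$. The $\mathbb{Z}$ is generated by Gompf's contactomorphism \cite{Gompf:S1xS2}, which is smoothly isotopic to the identity but has infinite order and, I claim, generates the entire kernel of the natural map to $\pi_0(\Diff_+(S^1 \times S^2))$. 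Since this map annihilates the $\mathbb{Z}$ and embeds the $\mathbb{Z}_2$, the two factors are independent and the extension is the split direct sum $\mathbb{Z} \oplus \mathbb{Z}_2$. As the statement notes, this is essentially the computation of Ding and Geiges \cite{DG:S1xS2}, whose argument I would cite for the infinite order of Gompf's element and for its generating the kernel.

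The main obstacle is pinning down the $\mathbb{Z}$ factor from both sides. On one side I must show Gompf's contactomorphism has infinite order, which means producing an invariant separating all of its iterates; in this framework that invariant is exactly the shift it induces on the rotation-type data of $L$, and the point is that the $b_1 = 1$ topology allows an unbounded amount of twisting along $S^1$ that the smooth mapping class group cannot detect. On the other side I must show the kernel is no larger than $\mathbb{Z}$, i.e.\ that the solid-torus computation of Theorem~\ref{thm:cmcg-s1d2} closes up exactly after regluing $N(L)$ and introduces no further exotic classes; verifying that the regluing produces no additional collapse, and that the $\mathbb{Z}_2$ and $\mathbb{Z}$ do not interact, is where leaning on the Ding--Geiges analysis is most valuable.
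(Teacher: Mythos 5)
Your overall skeleton---normalize $f$ on a Legendrian core $L$ using a classification of Legendrian representatives, then use Lemma~\ref{lem:fix} to make $f$ fix a standard neighborhood $N(L)$, and apply Theorem~\ref{thm:cmcg-s1d2} to the complementary solid torus (which Ding--Geiges show has two longitudinal dividing curves)---is exactly the paper's strategy, and the rotation-shift homomorphism to $\mathbb{Z}$ is the right detecting invariant. But two of your supporting steps are genuinely wrong. First, Theorem~\ref{thm:unknot-Legendrian} does not apply here: it is stated and proved for $p>q>0$, and the core of $S^1\times S^2$ generates $H_1(S^1\times S^2)=\mathbb{Z}$, so it is not rationally null-homologous and the invariants $\tb_{\mathbb{Q}}$ and $\rot_{\mathbb{Q}}$ appearing in that theorem are not even defined. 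The input you actually need is Theorem~\ref{thm:unknot-s1s2} (Chen--Ding--Li): oriented Legendrian cores are classified by the rotation number, which is well-defined only because $e(\xi_{std})=0$ trivializes the plane field. This is the ingredient the paper uses, and it is why the paper credits the essential work to \cite{DG:S1xS2, CDL:knots1s2}.

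Second, your description of the two factors and of the natural map to $\pi_0(\Diff_+(S^1\times S^2))=\mathbb{Z}_2\oplus\mathbb{Z}_2$ is incorrect, and as a result your enumeration of contact mapping classes would be incomplete. In the paper the $\mathbb{Z}$ factor is generated by $\delta$, the contactomorphism smoothly isotopic to the Dehn twist along an essential sphere; by Theorem~\ref{thm:mcg-s1s2} its smooth class has order two---it is neither smoothly trivial nor of infinite smooth order. Gompf's exotic contactomorphism is (in the class of) $\delta^2$, the kernel of the map to the smooth mapping class group is the index-two subgroup $2\mathbb{Z}$ of the $\mathbb{Z}$ factor, and the map is surjective onto $\mathbb{Z}_2\oplus\mathbb{Z}_2$: it reduces the $\mathbb{Z}$ factor mod $2$ rather than annihilating it. Concretely, $\delta$ itself is a contactomorphism smoothly isotopic to the Gluck twist with rotation shift $1$ on $L$ (Lemma~\ref{lem:rot-s1s2}); in your framework, where the classes are generated by Gompf's element (shift $2$, smoothly trivial) together with a discrete symmetry, this class is unaccounted for. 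Relatedly, the $\mathbb{Z}_2$ is generated by $\eta$, which reverses the orientation of the core---the analogue of $\tau$, not of $\sigma$ (for lens spaces with $q\not\equiv -1 \pmod p$ the analogue of $\tau$ fails to preserve $\xi_{std}^\pm$, whereas $\eta$ is an honest contactomorphism here); you need $\eta$ to reduce the case where $f(L)$ is smoothly isotopic to $-L$ to the normalized case, and you must still verify that $\delta$ and $\eta$ commute up to contact isotopy (the paper checks this by the same normalization argument applied to $\delta^{-1}\circ\eta^{-1}\circ\delta\circ\eta$) before concluding that the group is the direct sum $\mathbb{Z}\oplus\mathbb{Z}_2$ rather than some other extension.
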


Recall that $\Diff_0(M)$ is the connected component of $\Diff_+(M)$ containing the identity. We can define a subgroup of $\Cont(M,\xi)$ as follows:  
\[
  \Cont_0(M,\xi) := \Cont(M,\xi) \cap \Diff_0(M).
\]
Ding and Geiges \cite{DG:S1xS2} proved that $\pi_0(\Cont_0(S^1 \times S^2, \xi_{std}))$ is isomorphic to $\mathbb{Z}$. From Corollary~\ref{cor:inclusion}, it is immediate that $\pi_0(\Cont_0(L(p,q),\xi_{std}))$ is trivial.

\begin{corollary}\label{cor:cont0}
  For every lens space $L(p,q)$, we have
  \[
    \pi_0(\Cont_0(L(p,q),\xi_{std})) = 1.
  \]
\end{corollary}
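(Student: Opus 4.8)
The plan is to read the statement off directly from the injectivity half of Corollary~\ref{cor:inclusion}, which holds for \emph{every} lens space regardless of the congruence class of $q$. First I would unwind the definitions so that the group in question becomes the kernel of $i_*$. By definition an element of $\Cont_0(L(p,q),\xi_{std})$ is a contactomorphism $f$ that lies in $\Diff_0(L(p,q))$, i.e.\ that is smoothly isotopic to the identity; hence its image $i_*([f])$ in the smooth mapping class group $\pi_0(\Diff_+(L(p,q)))$ is trivial. Conversely, any class in $\ker(i_*)$ is represented by a contactomorphism lying in $\Diff_0$, that is, in $\Cont_0$. Thus the classes of $\pi_0(\Cont(L(p,q),\xi_{std}))$ coming from $\Cont_0$ are precisely $\ker(i_*)$.

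Next I would invoke Corollary~\ref{cor:inclusion}: in both cases ($q \equiv -1 \pmod p$ or not) the map $i_*$ is injective, so $\ker(i_*) = 1$. This shows that any $f \in \Cont_0(L(p,q),\xi_{std})$ is contact isotopic, through contactomorphisms, to the identity. The one point that needs a sentence of care is that this contact isotopy can be taken inside $\Cont_0$ rather than merely inside $\Cont$: if $f_t$ is a path of contactomorphisms with $f_0 = \mathrm{id}$ and $f_1 = f$, then for each $t$ the truncated path $\{f_s\}_{s \in [0,t]}$ is in particular a smooth isotopy from the identity to $f_t$, so $f_t \in \Diff_0(L(p,q))$ and hence $f_t \in \Cont_0(L(p,q),\xi_{std})$. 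Therefore $f$ represents the trivial element of $\pi_0(\Cont_0(L(p,q),\xi_{std}))$, and the group is trivial.

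Since all of the genuine geometric content is already packaged inside Corollary~\ref{cor:inclusion} (which in turn rests on the computation of $\pi_0(\Cont(L(p,q),\xi_{std}))$ in Theorem~\ref{thm:cmcg-lens} and of the smooth mapping class group in Theorem~\ref{thm:mcg-lens}), I expect no real obstacle here beyond this bookkeeping; the deduction is purely formal. The only place where one could stumble is in conflating ``contact isotopic to the identity in $\Cont$'' with ``in $\Cont_0$'', which the truncation argument above resolves.
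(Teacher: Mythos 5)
Your proposal is correct and follows essentially the same route as the paper, which likewise identifies the classes of $\Cont_0(L(p,q),\xi_{std})$ with $\ker i_*$ and then quotes the injectivity of $i_*$ from Corollary~\ref{cor:inclusion}. Your extra truncation argument, showing that a contact isotopy from the identity to $f$ automatically stays inside $\Cont_0$, is a careful spelling-out of a point the paper leaves implicit, and it is correct.
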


%-------------------------------------------------------------------------------------------------
\subsection{Legendrian and transverse rational unknots in lens spaces}\label{sec:unknot-intro}
%-------------------------------------------------------------------------------------------------
A \dfn{rational unknot} in a lens space is a core of a Heegaard torus. See Section~\ref{sec:mcg-lens} for more details, and also see Figure~\ref{fig:unknots} and \ref{fig:unknots-smooth} for (contact) surgery presentations of rational unknots in lens spaces. 

\begin{figure}[htbp]
  \vspace{0.2cm}
  \begin{overpic}{figures/unknots}
    \put(130, 80){$\left(\frac{q-p}{q}\right)$}
    \put(130, 40){$L_1$}

    \put(336, 80){$\left(\frac{p'-p}{p'}\right)$}
    \put(336, 40){$L_2$}
  \end{overpic}
  \vspace{0.1cm}
  \caption{Two contact surgery presentations for the Legendrian rational unknots in tight contact structures on $L(p,q)$ with $\tbb_{\mathbb{Q}}$, the maximal rational Thurston--Bennequin number. Here, $p'/q'$ is the largest rational number satisfying $pq' - p'q = -1$.}
  \label{fig:unknots}
\end{figure}

Etnyre and Baker \cite{BE:rational} coarsely classified Legendrian rational unknots in any tight contact structure on lens spaces, see Theorem~\ref{thm:unknot-coarse}. Recall that the coarse classification is the classification up to contactomorphism which is smoothly isotopic to the identity. Also, Geiges and Onaran \cite{GO:unknots} coarsely classified non-loose Legendrian unknots in some lens spaces.  

However, to study the contact mapping class group, we need to understand the behavior of Legendrian (or contact) submanifolds under Legendrian (or contact) isotopy. Our second main result is the classification of Legendrian rational unknots in any tight contact structure on lens spaces up to Legendrian isotopy. 

\begin{theorem}\label{thm:unknot-Legendrian}
  Suppose $p > q > 0$ and $\xi$ is a tight contact structure on $L(p,q)$. Rational unknots in $\xi$ are Legendrian simple: there are Legendrian representatives 
  \[
  \begin{cases}
    L_1 \,& p = 2,  \\
    L_1,-L_1 \,& p \neq 2\, \text{ and }\, q \equiv \pm1 \modp p, \\
    L_1,-L_1,L_2,-L_2  & \text{otherwise}.
  \end{cases}
  \]
  with 
  \[
    \tb_{\mathbb{Q}}(\pm L_1)= -\frac{p-q}{p} \;\; \text{and}\;\; \tb_{\mathbb{Q}}(\pm L_2) = -\frac{p-p'}{p}\\
  \]
  where $p'/q'$ is the largest rational number satisfying $pq' - p'q = -1$. Also the rational rotation numbers are determined by the formula in Lemma~\ref{lem:rotq-surgery} or~\ref{lem:rotq-Farey}. Every Legendrian representative of rational unknots in $\xi$ is Legendrian isotopic to one of the Legendrian representatives above, or their stabilization.
\end{theorem}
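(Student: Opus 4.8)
The plan is to reduce the classification to the study of tight contact structures on a solid torus, namely the complement of a standard neighborhood of the Legendrian, and then to upgrade the resulting coarse statement to a genuine Legendrian isotopy statement by means of one-parametric convex surface theory.

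First I would set up the complementary picture. Since a rational unknot is a core of a Heegaard solid torus, the complement of a standard neighborhood $N(L)$ of a Legendrian representative $L$ is again a solid torus $V = L(p,q) \setminus \interior N(L)$. After making $\partial V$ convex, the rational invariants $\tb_{\Q}(L)$ and $\rot_{\Q}(L)$ can be read off from the slope of the dividing set and the relative rotation data on $\partial V$. Honda's classification of tight contact structures on solid tori then constrains which boundary slopes are carried by a tight structure, which should yield the maximal value of $\tb_{\Q}$ claimed (the two values $-(p-q)/p$ and $-(p-p')/p$ corresponding to the two Heegaard cores $L_1$ and $L_2$) together with the admissible rotation numbers at that level, as computed in Lemma~\ref{lem:rotq-surgery} and Lemma~\ref{lem:rotq-Farey}. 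Existence of the listed representatives, and the fact that every representative below the top $\tb_{\Q}$ destabilizes to one of them, follows from the coarse classification of Baker--Etnyre (Theorem~\ref{thm:unknot-coarse}) combined with the standard observation that a non-maximal Legendrian admits a destabilization.

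The heart of the matter is Legendrian simplicity: two representatives with equal classical invariants are Legendrian isotopic, not merely coarsely equivalent. Here I would run the one-parametric convex surface machinery. Starting from a smooth isotopy between two such representatives $L_0$ and $L_1$, together with the induced ambient isotopy of their complementary Heegaard tori, I would apply Colin's isotopy discretization \cite{Colin:discretization} to replace it by a finite sequence of convex tori in which consecutive members differ by a single bypass attachment. Using the bypass attachment and digging results of Honda and of Honda--Kazez--Mati\'c \cite{Honda:non-rotative,Honda:bypass,HKM:attach=dig}, each elementary step can be analyzed on the Farey graph: the goal is to show that every bypass occurring in the family is either trivial, so that the step is an honest Legendrian isotopy, or can be traded against a later step so that the whole discretized family reassembles into a Legendrian isotopy preserving the classical invariants.

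The main obstacle is controlling these bypasses in the one-parameter family and matching the count of surviving Legendrians exactly to the arithmetic cases in the statement. Two distinct bypasses attached to the complementary torus can produce Legendrians that share $\tb_{\Q}$ and $\rot_{\Q}$ yet lie in different isotopy classes, while conversely an unexpected contactomorphism of the complement can identify two that appear different; pinning down precisely when these coincidences occur is what produces the trichotomy. In particular I expect the degeneration at $p = 2$ (where the core and its conjugate collapse), the doubling of representatives when $q \equiv \pm 1 \pmod p$ against the appearance of all four classes $L_1, -L_1, L_2, -L_2$ otherwise, and the subtler identifications governed by $q^{2} \equiv 1 \pmod p$ to emerge from this analysis. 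The most delicate part of the argument will be the bookkeeping of rotation numbers along the Farey tessellation and the verification that no stray bypass relates two nominally distinct classes.
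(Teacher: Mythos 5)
Your setup is consistent with the paper: the invariants and the existence of the listed representatives, together with destabilization below the maximal $\tb_{\mathbb{Q}}$, do come from the coarse classification of Baker--Etnyre (Theorem~\ref{thm:unknot-coarse}), and the simplicity statement is indeed attacked by discretizing an isotopy of the boundary torus of a standard neighborhood via Theorem~\ref{thm:discretization}. But the heart of your plan --- ``every bypass occurring in the family is either trivial, or can be traded against a later step so that the family reassembles into a Legendrian isotopy'' --- is a hope, not an argument, and as stated it fails: in the discretized family the intermediate tori genuinely change dividing slope, so non-trivial bypasses do occur, and there is no cancellation mechanism for them. The paper's Proposition~\ref{prop:unknot} accommodates non-trivial bypasses instead of eliminating them: to each intermediate solid torus $N_i$ with dividing slope $s_i$ it associates an auxiliary maximal-twisting Legendrian $L_i$, namely the core of an $S(-1,0;u)$ contained in $N_i$ when $s_i \leq -1$ or containing $N_i$ when $s_i \geq -1$, and in the regime $s_i, s_{i+1} \geq -1$ it passes to the complementary Heegaard core, whose standard neighborhood $S(s_1,-p/q;l)$ lies in the complement. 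Consecutive $L_i$ are then shown to be Legendrian isotopic not by Farey-graph bookkeeping of the bypasses but by a prior classification of Legendrian cores of tight solid tori with two dividing curves (Proposition~\ref{prop:core-any}). That classification is the real work your outline omits: it is proved by a meridian-disk analysis (boundary-parallel dividing curves and Theorem~\ref{thm:imba}, plus the attach$=$dig principle, Theorem~\ref{thm:attach=dig}) reducing to the universally tight case (Proposition~\ref{prop:core-univ}), which in turn rests on the triviality of the contact mapping class group of the universally tight solid torus (Theorem~\ref{thm:cmcg-s1d2}). Without these inputs your reassembly step has no engine.

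A second, conceptual misreading: you expect the trichotomy of cases, and the identifications governed by $q^2 \equiv 1 \pmod p$, to ``emerge from the bypass analysis.'' They do not and need not. The case structure is inherited verbatim from the smooth isotopy classification of oriented rational unknots (Lemma~\ref{lem:unknots}, due to Bonahon) and is already present in the coarse Theorem~\ref{thm:unknot-coarse}; the condition $q^2 \equiv 1 \pmod p$ plays no role in this theorem at all (it enters only the contact mapping class group computation). The bypass analysis has the single job of proving simplicity within each smooth isotopy class at fixed $(\tb_{\mathbb{Q}},\rot_{\mathbb{Q}})$; there is no extra counting of classes or rotation-number bookkeeping along the Farey tessellation to perform. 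Relatedly, contactomorphisms of the complement work \emph{for} the argument, not against it: it is precisely the triviality of $\pi_0(\Cont(V,\xi))$ for the complementary solid torus that upgrades coarse equivalence to Legendrian isotopy. Finally, note that one needs Proposition~\ref{prop:core-any} in full generality, not merely the universally tight case, even when $\xi$ itself is universally tight, because standard neighborhoods arising in the family can be virtually overtwisted --- a subtlety the paper flags explicitly in the remark following Proposition~\ref{prop:unknot}.
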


See Figure~\ref{fig:RP3} for the Legendrian mountain range of the rational unknot in $L(2,1)$. We also give the classification of (positive) transverse rational unknots in any tight contact structure on lens spaces up to transverse isotopy. 

\begin{figure}[htbp]{\scriptsize 
  \vspace{0.7cm}
    \begin{overpic}[tics=20]{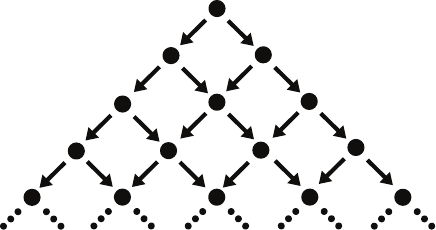}
    \put(74,125){$-1$}
    \put(102,125){$0$}
    \put(126,125){$1$}

    \put(-40, 103){$-1/2$}
    \put(-40, 81){$-3/2$}
    \put(-40, 58){$-5/2$}
    \put(-40, 36){$-7/2$}
    \put(-40, 14){$-9/2$}
  \end{overpic}}
  \vspace{0.1cm}
  \caption{The Legendrian mountain range of the rational unknot in $(\mathbb{R}\mathbb{P}^3,\xi_{std})$. Each dot represents a unique Legendrian representative with $(\rot_{\mathbb{Q}},\tb_{\mathbb{Q}})$.} 
  \label{fig:RP3}
\end{figure}

\begin{theorem} \label{thm:unknot-transverse}
  Suppose $p > q > 0$ and $\xi$ is a tight contact structure on $L(p,q)$. Rational unknots in $\xi$ are transversely simple: there are transverse representatives 
  \[
    \begin{cases}
      T_1 \,& p = 2,  \\
      T_1,\overline{T}_1 \,& p \neq 2\, \text{ and }\, q \equiv \pm1 \modp p, \\
      T_1,\overline{T}_1,T_2,\overline{T}_2  & \text{otherwise,}
    \end{cases}
  \]
  such that every transverse representative of rational unknots in $\xi$ is transversely isotopic to one of the transverse representatives above, or their stabilization. Also. $T_i$ is a positive transverse push-off of $L_i$ and $\overline{T}_i$ is a positive transverse push-off of $-L_i$.
\end{theorem}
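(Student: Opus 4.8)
The plan is to deduce the transverse classification entirely from the Legendrian classification already established in Theorem~\ref{thm:unknot-Legendrian}, using the standard correspondence between transverse and Legendrian knots furnished by the positive transverse push-off. Concretely, I would first recall that in any contact $3$-manifold the positive transverse push-off assigns to each Legendrian knot $L$ a transverse knot $T(L)$, and that this construction induces a bijection between transverse isotopy classes and Legendrian isotopy classes modulo negative stabilization: every transverse knot is transversely isotopic to $T(L)$ for some Legendrian $L$, and $T(L_0)$ is transversely isotopic to $T(L_1)$ if and only if $L_0$ and $L_1$ admit a common negative stabilization. Since both the push-off and the stabilization operations are supported in a Darboux ball, this correspondence is entirely local and applies verbatim to rational unknots in $(L(p,q),\xi)$.

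Granting this, the classification of transverse representatives follows from the shape of the Legendrian mountain ranges in Theorem~\ref{thm:unknot-Legendrian}. Modding out by negative stabilization collapses each mountain range onto its positive-stabilization edge, so by Legendrian simplicity every Legendrian rational unknot is, after negative destabilization, one of $S_+^{a}(L_i)$ or $S_+^{a}(-L_i)$ for some $a \geq 0$. Because negative Legendrian stabilization leaves the positive push-off unchanged while positive Legendrian stabilization corresponds to a transverse stabilization, the transverse representatives are precisely the push-offs $T_i := T(L_i)$ and $\overline{T}_i := T(-L_i)$ together with their transverse stabilizations. This reproduces the case division of the statement verbatim, since the number of peaks in the Legendrian mountain range (one, two, or four, according to whether $p=2$, or $q \equiv \pm 1 \pmod p$, or otherwise) is exactly the number of maximal transverse representatives.

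It then remains to verify that the listed representatives are pairwise non-transversely-isotopic, i.e. that distinct peaks are not identified after negative stabilization. For this I would compute the rational self-linking numbers via $\self_{\mathbb{Q}}(T(L)) = \tb_{\mathbb{Q}}(L) - \rot_{\mathbb{Q}}(L)$ for rationally null-homologous $L$ (see \cite{BE:rational}) together with the values of $\tb_{\mathbb{Q}}$ and $\rot_{\mathbb{Q}}$ recorded in Theorem~\ref{thm:unknot-Legendrian}: since $L_i$ and $-L_i$ share $\tb_{\mathbb{Q}}$ but have opposite nonzero $\rot_{\mathbb{Q}}$, their push-offs have distinct $\self_{\mathbb{Q}}$, while the $L_1$- and $L_2$-families are already separated by their maximal $\tb_{\mathbb{Q}}$. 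Distinct self-linking numbers obstruct both transverse isotopy and any common negative stabilization, so the peaks stay distinct. The main obstacle I anticipate is not this combinatorics but rather pinning down the push-off correspondence rigorously in the rational setting: one must confirm that the rational self-linking of a transverse push-off obeys the expected formula and that the bijection ``transverse $=$ Legendrian mod negative stabilization'' survives when the knots are only rationally null-homologous, so that the Legendrian simplicity of rational unknots transfers cleanly to transverse simplicity. Once these foundational points are secured, the remaining verification is routine.
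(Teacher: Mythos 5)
Your proposal is correct and follows essentially the same route as the paper, whose entire proof is to cite Etnyre--Honda \cite[Proof of Theorem~2.10]{EH:transverse} for the equivalence ``transverse classification $=$ Legendrian classification modulo negative stabilization'' and then invoke Theorem~\ref{thm:unknot-Legendrian}; the foundational point you flag (that the correspondence and the self-linking formula persist for rationally null-homologous knots) is exactly what \cite{BE:rational} supplies. One small caveat: your final distinctness check is both unnecessary and, in one case, wrong --- the listed representatives lie in pairwise smoothly non-isotopic oriented knot types by Lemma~\ref{lem:unknots}, so no invariant computation is needed, and your claim that $\pm L_i$ have opposite \emph{nonzero} $\rot_{\mathbb{Q}}$ fails when $q \equiv -1 \pmod p$, where Lemma~\ref{lem:rotq-Farey} gives $\rot_{\mathbb{Q}}(\pm L_1)=0$ and hence $T_1$ and $\overline{T}_1$ share the same $\self_{\mathbb{Q}}$ while still being distinguished smoothly.
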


There is an application of Theorem~\ref{thm:unknot-Legendrian} and \ref{thm:unknot-transverse}. In the forthcoming paper with Baker, Etnyre, and Onaran \cite{BEMO:torus}, we classify Legendrian and transverse positive torus knots in any tight contact structure on lens spaces. Basically, we classify the knots coarsely. However, due to Theorem~\ref{thm:unknot-Legendrian} and \ref{thm:unknot-transverse}, we could improve the result up to Legendrian and transverse isotopy. Unfortunately, there are some subtleties for negative torus knots (\dfn{e.g.}~Legendrian large cables, see \cite{CEM:cables}), so we could only improve the result for the negative torus knots with sufficiently negative $\tb_{\mathbb{Q}}$ and $\self_{\mathbb{Q}}$.

%---------------------------------------------------
\subsection*{Acknowledgements}
%---------------------------------------------------
The author thanks Anthony Conway for helpful comments on the first draft of the paper and John Etnyre for a useful conversation. The author also appreciates the referee for helpful comments.

%%%%%%%%%%%%%%%%%%%%%%%%%%%%%%%%%%%%%%%%%%%%%%%%%%%%%%%%%%%%
\section{Background and preliminary results}
%%%%%%%%%%%%%%%%%%%%%%%%%%%%%%%%%%%%%%%%%%%%%%%%%%%%%%%%%%%%

In this section, we review and prove some useful results on contact topology and the mapping class group of lens spaces that will be used throughout the paper. We assume the reader is familiar with $3$-dimensional contact topology, in particular, Legendrian and transverse knots and the convex surface theory. See \cite{Etnyre:knots,Geiges:book,Honda:classification1} for more details.

%--------------------------------------------------------
\subsection{Convex surfaces and bypasses} \label{sec:convex}
%--------------------------------------------------------
First, we warn the reader that our convention is that slopes of curves on the boundary of a solid torus with a preferred longitude are given by $\frac{\rm{meridian}}{\rm{longitude}}$. This has led to some differences between how we cite results and how they were initially stated.  

We will use several properties of convex surfaces without explicitly mentioning them (for more details, see \cite{Giroux:criteria, Giroux:classification,Giroux:cmcg,Honda:classification1}): perturbing a compact surface to be convex, realizing a particular characteristic foliation for the given dividing set, and using the Legendrian realization principle. We also assume that the boundary of any convex surface $\Sigma$ is Legendrian, if non-empty. When $\bd \Sigma$ is connected, then $\bd \Sigma$ is null-homologous and $tb(\bd \Sigma)$ is well-defined. Kanda \cite{Kanda:tb} proved
\begin{equation}\label{eq:tb}
  tb(\bd \Sigma) = -\frac{1}{2}\left|\bd \Sigma \cap \Gamma_\Sigma\right|,
\end{equation}
where $\Gamma_\Sigma$ is the dividing set of $\Sigma$. Suppose $\Sigma$ is a properly embedded convex surface in a contact $3$--manifold with convex boundary. The Euler class of the contact structure evaluates to $\chi(\Sigma_+) - \chi(\Sigma_-)$ on $\Sigma$ where $\Sigma_\pm$ are the positive/negative regions of the convex surface. If $(M,\xi)$ is a contact manifold with boundary, then we use the relative Euler class of $\xi$ to a nowhere vanishing section of $\xi|_{\bd M}$.

We can modify a convex surface by attaching a \dfn{bypass}, introduced by Honda \cite{Honda:classification1}. Consider a convex overtwisted disk whose dividing set consists of a single contractible closed curve. Take a properly embedded arc $\gamma$ on the disk intersecting the dividing curve in two points. By applying the Legendrian realization principle, we can assume that $\gamma$ is a Legendrian arc, and cut the disk along $\gamma$; each half-disk is called a bypass. Now, suppose a bypass $D$ transversely intersects a convex surface $\Sigma$ such that $D \cap \Sigma = \gamma$. Let $\Gamma_\Sigma$ be the dividing set of $\Sigma$. Since the dividing set interleaves, $\gamma$ intersects $\Gamma_\Sigma$ in three points. We call the Legendrian arc $\gamma$ on $\Sigma$ the \dfn{attaching arc} of the bypass $D$ and say $D$ is a bypass for $\Sigma$. After edge-rounding, the convex boundary of a neighborhood of $D \cup \Sigma$ is a surface isotopic to $\Sigma$ but with its dividing set changed in a neighborhood of the attaching arc as shown in Figure~\ref{fig:bypass-attachment}. We call this process a \dfn{bypass attachment along $\gamma$}. Note that Figure~\ref{fig:bypass-attachment} is drawn for the case that the bypass $D$ is attached ``from the front'', that is, sitting above the page. If we attach a bypass ``from the back'' of $\Sigma$, the result will be the mirror image of Figure~\ref{fig:bypass-attachment}. 

\begin{figure}[htbp]
  \vspace{0.2cm}
  \begin{overpic}[tics=20]{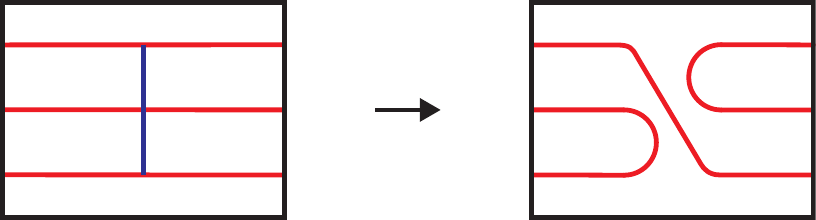}
  \end{overpic}
  \vspace{0.1cm}
  \caption{The effect of a bypass attachment from the front.}
  \label{fig:bypass-attachment}
\end{figure}

To study the effect of a bypass attachment on a torus, we first need to review the \dfn{Farey graph}. Given two rational numbers $a/b$ and $c/d$, we define their \dfn{Farey sum} to be 
\[
  \frac ab \oplus \frac cd = \frac{a+c}{b+d}.
\] 
We define their \dfn{Farey multiplication} to be
\[
  \frac ab \bigcdot \frac cd = ad - bc.
\]
We also define their \dfn{Farey subtraction} to be
\[
  \frac ab \ominus \frac cd = \frac{a-c}{b-d}.
\]
Take the Poincar\'e disk in $\R^2$ and label the points $(0,1)$ as $0=0/1$ and $(0,-1)$ as $\infty=1/0$. Take the half circle with non-negative $x$-coordinate. Pick a point in a half-way between two labeled points and label it with the Farey sum of the two points and connect it to both points by a geodesic. Repeat this process until all the positive rational numbers are a label on some point on the unit disk. Repeat the same for the half circle with non-positive $x$-coordinate (for $\infty$, use the fraction $-1/0$). We call this disk with the labels the Farey graph, see Figure~\ref{fig:Farey}. Also notice that two rational numbers $r$ and $s$ satisfy $|r \bigcdot s| = 1$ if and only if there is an edge between them in the Farey graph.

\begin{figure}[htbp]{\scriptsize
  \vspace{0.2cm}
  \begin{overpic}[tics=20]{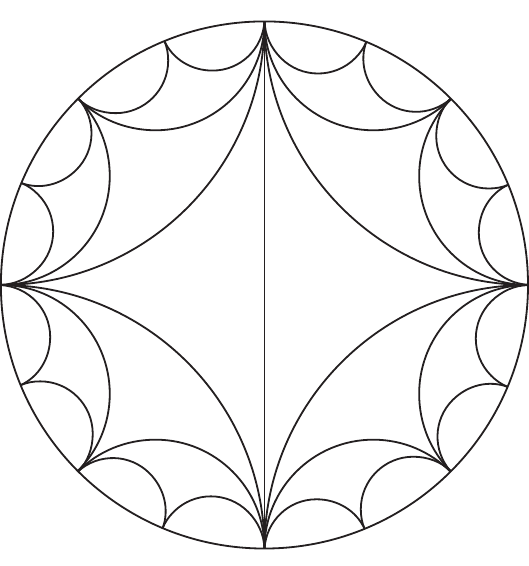}
    \put(123, 0){$\infty$}
    \put(125, 266){$0$}
    \put(-15, 132){$-1$}
    \put(257, 132){$1$}
    \put(20, 38){$-2$}
    \put(222, 38){$2$}
    \put(19, 232){$-1/2$}
    \put(218, 232){$1/2$}
    \put(60, 260){$-1/3$}
    \put(175, 260){$1/3$}
    \put(-13, 185){$-2/3$}
    \put(248, 185){$2/3$}
    \put(-17, 80){$-3/2$}
    \put(248, 80){$3/2$}
    \put(63, 8){$-3$}
    \put(175, 8){$3$}
  \end{overpic}}
  \vspace{0.1cm}
  \caption{The Farey graph.}
  \label{fig:Farey}
\end{figure}

We say a convex torus $T$ is a \dfn{standard convex torus} if the dividing set $\Gamma_T$ consists of two homologically essential closed curves and $T$ is foliated by Legendrian curves of any slope different from the dividing slope, called \dfn{Legendrian ruling curves}, and there are two singular lines parallel to the dividing curves, called \dfn{Legendrian divides}. By Giroux flexibility, we can make a $C^{0}$-small perturbation so that any convex torus becomes standard with any ruling slope (other than the dividing slope). Let $\gamma$ be an attaching arc of a bypass for $T$. Honda \cite{Honda:classification1} completely studied what happens when $\gamma$ is a part of a ruling curve for $T$. 

\begin{theorem}[Honda \cite{Honda:classification1}]\label{thm:bypass-torus}
	Suppose a standard convex torus $T$ has two dividing curves of slope $s$, and $\gamma$ is an attaching arc of a bypass for $T$, which is a part of a ruling curve of slope $r$. Let $T'$ be the convex torus obtained from $T$ by attaching a bypass along $\gamma$. Then the dividing set $\Gamma_{T'}$ consists of two dividing curves of slope $s'$, where 
  \begin{itemize}
    \item if the bypass is attached from the front, then $s'$ is the farthest point from $s$ on the Farey graph clockwise of $s$ and counterclockwise of $r$ that is connected to $s$ by an edge (and if $s$ and $r$ are connected by an edge, then $s' = r$),
    \item if the bypass is attached from the back, then $s'$ is the farthest point from $s$ on the Farey graph counterclockwise of $s$ and clockwise of $r$ that is connected to $s$ by an edge (and if $s$ and $r$ are connected by an edge, then $s' = r$).
  \end{itemize}
\end{theorem}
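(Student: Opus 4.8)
The plan is to reduce the statement to a normalized model by a change of coordinates and then to read the new dividing slope off the local effect of the bypass attachment recorded in Figure~\ref{fig:bypass-attachment}. First I would apply an orientation-preserving diffeomorphism of $T$, i.e.\ an element of $SL(2,\Z)$, carrying the dividing slope $s$ to $\infty$, so that $\Gamma_T$ becomes a pair of vertical curves. The M\"obius action of $PSL(2,\Z)$ on $\Q\cup\{\infty\}$ preserves the relation $|r\bigcdot s|=1$, hence every edge of the Farey graph, as well as the cyclic orientation of the boundary circle (the clockwise/counterclockwise distinction) and the co-orientation of $T$ (the front/back distinction). The whole statement is therefore invariant under this normalization, so it suffices to treat $s=\infty$. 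In these coordinates the neighbors of $s$ in the Farey graph are exactly the integers, and the theorem asserts $s'=\lfloor r\rfloor$ for a front attachment and $s'=\lceil r\rceil$ for a back attachment; the parenthetical case $s'=r$ is just the instance in which $r$ is itself an integer. Using the stabilizer $z\mapsto z+n$ of $\infty$ I may additionally assume $r\in[0,1)$, so that the goal becomes $s'=0$ for the front attachment.

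With $s=\infty$ fixed, I would realize a ruling curve of slope $r$ and take an attaching arc $\gamma$ meeting the two vertical dividing curves in exactly three points. The bypass attachment modifies $\Gamma_T$ only near these three intersection points, reconnecting the three local strands of the dividing set according to Figure~\ref{fig:bypass-attachment}. The core computation is then to record the cyclic arrangement of the three crossings along the two vertical curves and to determine the homology class of the reconnected multicurve after edge-rounding. Carrying this out should show that the reconnection merges the two vertical strands into a pair of parallel curves of slope $0$, giving $s'=0$, and the mirror picture (Figure~\ref{fig:bypass-attachment} attached from the back) gives $s'=1=\lceil r\rceil$, matching the counterclockwise wording of the theorem. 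Undoing the two normalizations by the inverse M\"obius maps then recovers the general formula, since $\lfloor\,\cdot\,\rfloor$ and $\lceil\,\cdot\,\rceil$ transport precisely to ``the farthest vertex adjacent to $s$ in the arc toward $r$.''

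Two auxiliary points need attention. I must confirm that the result is exactly two dividing curves of a single slope rather than a larger multicurve: since the ambient contact structure is tight near $T$, Giroux's criterion forbids contractible dividing curves, so every component of the reconnected $\Gamma_{T'}$ is essential, and I would check from the explicit rerouting that these components are two parallel copies. I would also verify orientation conventions---that a front attachment corresponds to clockwise, given the paper's slope convention of $\frac{\text{meridian}}{\text{longitude}}$---by testing the computation against the degenerate adjacent case, where the answer is forced to be $s'=r$.

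I expect the main obstacle to be the fact that the attaching arc $\gamma$, although it meets $\Gamma_T$ in only three points, is a long arc winding around $T$ with slope $r$, so the bypass modification is genuinely global and the reconnected slope depends on how $r$ winds, not merely on a local germ. Establishing that the answer is nonetheless the uniform value $\lfloor r\rfloor$ for every $r$ in a given Farey interval is the crux. I would attack this either by a direct homological bookkeeping of the three crossings for arbitrary $r\in[0,1)$, or by an induction on the continued-fraction length of $r$: peeling off one Farey step at a time by interpolating a convex torus whose slope is the Farey parent of $r$ closer to $s$, and reducing each step to the adjacent base case already computed. Keeping the cyclic positions of the crossings and the front/back bookkeeping consistent throughout is the part most prone to sign errors, which is why the adjacent-case sanity check is essential.
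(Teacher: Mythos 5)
The paper does not prove Theorem~\ref{thm:bypass-torus} at all: it is quoted as background from Honda \cite{Honda:classification1}, so the relevant comparison is with Honda's original argument, which your outline essentially reconstructs. The normalization by $\mathrm{SL}(2,\Z)$ taking $s$ to $\infty$ (the M\"obius action does preserve Farey edges and the circular order), the reduction to $s'=\lfloor r\rfloor$ for a front attachment and $s'=\lceil r\rceil$ for a back attachment, and the direct combinatorial rerouting of $\Gamma_T$ near the three intersection points is exactly the standard proof; your identification of ``front'' with $\lfloor r\rfloor$ is also consistent with the orientation of the Farey disk as drawn in Figure~\ref{fig:Farey}. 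You are likewise right that the crux is the global winding of $\gamma$: after cutting $T$ along the two vertical dividing curves, each spanning arc of an annulus is individually unique up to sliding endpoints, but the \emph{relative} twisting of the consecutive segments of $\gamma$ is not, and that relative twisting is precisely where the dependence on $r$ enters the bookkeeping.

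Two concrete flaws in the plan, though. First, your proposed calibration of the front/back versus clockwise/counterclockwise correspondence ``against the degenerate adjacent case'' is vacuous: when $s$ and $r$ are joined by a Farey edge, the theorem returns $s'=r$ for \emph{both} a front and a back attachment ($\lfloor r\rfloor=\lceil r\rceil$ exactly when $r\in\Z$ in the normalized picture), so that case cannot distinguish the two correspondences. You would need either a genuinely non-adjacent test case (e.g.\ $s=\infty$, $r=1/2$ drawn explicitly in the square) or to track the coorientation through the edge-rounding in Figure~\ref{fig:bypass-attachment} and its mirror. Second, the fallback induction on continued-fraction length by ``interpolating a convex torus at the Farey parent of $r$'' does not obviously make sense: the effect of the attachment on $\Gamma_T$ is determined purely by the isotopy class of the pair $(\gamma,\Gamma_T)$, and a single bypass is an elementary move that cannot be factored into bypasses along shorter attaching arcs, so inserting an intermediate convex torus inside the bypass layer does not decompose the computation. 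The honest route is the direct homological bookkeeping you list first (which is what Honda does); as stated, that computation is a plan rather than a proof, but it is the correct plan. A minor point: appealing to Giroux's criterion to exclude contractible components is unnecessary here---the rerouted multicurve can be checked by hand to consist of exactly two essential parallel curves, and the criterion would anyway require a tightness hypothesis the statement does not impose.
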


In general, we can find a bypass lying on a convex surface if there exist at least two dividing curves and a properly embedded boundary-parallel dividing curve on the surface by applying the Legendrian realization principle. In particular, if $\chi(\Sigma) < 1$ and there exists only one dividing curve on the surface which is properly embedded and boundary-parallel, then we can wiggle the surface and increase the number of dividing curves so we can still find a bypass. 

\begin{theorem}[Honda \cite{Honda:classification1}]\label{thm:imba} Let $\Sigma$ be a convex surface and $D$ be a convex disk with Legendrian boundary. Suppose $\Sigma$ and $D$ intersect transversely and $\Sigma \cap D = \bd D$. Suppose $tb(\bd D) < -1$. Then for any boundary-parallel dividing curve $d$ on $D$, there exists a bypass for $\Sigma$ containing $d$.
\end{theorem}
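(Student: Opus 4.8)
The plan is to realize the desired bypass as the innermost half-disk that the boundary-parallel arc $d$ cuts off from $D$, after arranging its attaching arc to meet $\Gamma_\Sigma$ in exactly three points. First I would convert the hypothesis into a count of dividing curves: by Kanda's formula~\eqref{eq:tb}, the bound $tb(\bd D) < -1$ forces $|\bd D \cap \Gamma_D| \geq 4$. Each component of $\Gamma_D$ meeting $\bd D$ is an arc with two endpoints on $\bd D$, so there are at least two such dividing arcs. I would then use that $\bd D$ is a common Legendrian boundary of the two convex surfaces $\Sigma$ and $D$: along $\bd D$ the two dividing sets meet the curve in the same finite set of points, so $\Gamma_\Sigma \cap \bd D = \Gamma_D \cap \bd D$.

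Next, since $d$ is boundary-parallel it cuts off an innermost half-disk $H \subset D$ whose interior meets no other component of $\Gamma_D$. In particular the sub-arc $\gamma_0 = \bd H \cap \bd D$ meets $\Gamma_\Sigma$ only at the two endpoints $q_1, q_2$ of $d$. I would promote $H$ to an honest bypass for $\Sigma$ whose own dividing arc is a Legendrian realization of $d$, so that the bypass contains $d$ as claimed. Its attaching arc $\gamma$ is a Legendrian arc on $\Sigma$ obtained from $\gamma_0$ by extending slightly past one endpoint so as to pick up one additional crossing with $\Gamma_\Sigma$. This is exactly where the hypothesis is used: the bound $|\bd D \cap \Gamma_D| \geq 4$ guarantees a dividing arc adjacent to $H$ whose endpoint supplies the third crossing, while the innermostness of $H$ guarantees there are no further crossings, so $\gamma \cap \Gamma_\Sigma$ consists of precisely three points. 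A Legendrian realization of $\gamma$ together with a perturbation of $H$ rel $\Sigma$, keeping $H \cap \Sigma = \gamma$, then puts $H$ into the standard model of a bypass half-disk.

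The main obstacle is this last step: checking that the attaching arc can be arranged to meet $\Gamma_\Sigma$ in exactly three points — neither fewer, which is what $tb(\bd D) < -1$ rules out, nor more, which the innermostness of $H$ rules out — and that the perturbation turning $H$ into the model bypass keeps it embedded and disjoint from $\Sigma$ away from $\gamma$. Once the three-point configuration is in place, the remainder is routine bookkeeping with Kanda's formula~\eqref{eq:tb} and the Legendrian realization principle.
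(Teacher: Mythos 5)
Your overall route is the standard one for this statement (it is Honda's own construction, which the paper cites rather than reproves): the half-disk $H$ cut off by the boundary-parallel arc $d$, slightly enlarged and with its attaching arc Legendrian-realized as an extension of the subarc $\gamma_0 = \bd H \cap \bd D$, becomes the bypass whose dividing arc is $d$. However, there is a genuine error at the pivotal counting step. You assert that since $\bd D$ is a common Legendrian boundary of the two transverse convex surfaces, $\Gamma_\Sigma \cap \bd D = \Gamma_D \cap \bd D$. This is false: the two dividing sets meet $\bd D$ in the same \emph{number} of points, namely $-2\tb(\bd D)$ each, but the two collections are disjoint and \emph{interleave} (alternate) around $\bd D$ --- this is exactly the interleaving property the paper invokes repeatedly, e.g.\ in the definition of a bypass attachment and in the proof of Proposition~3.3, where it is used to conclude $|\overline{c} \cap \Gamma_D| = |\overline{c} \cap \Gamma_{\partial V}|$, an equality of cardinalities, not of sets.

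Because of this, your three-point count is wrong in structure, not just in detail. The endpoints $q_1, q_2$ of $d$ lie on $\Gamma_D$, not on $\Gamma_\Sigma$, and by interleaving the subarc $\gamma_0$ meets $\Gamma_\Sigma$ in exactly \emph{one} interior point (the unique $\Gamma_\Sigma$-point between the consecutive $\Gamma_D$-points $q_1$ and $q_2$), while its endpoints lie on no dividing curve of $\Sigma$ at all. So extending past \emph{one} endpoint, as you propose, cannot produce a legitimate attaching arc: one must extend $\gamma_0$ past \emph{both} endpoints, each extension picking up one new intersection with $\Gamma_\Sigma$, giving the required three points with the endpoints of the attaching arc on $\Gamma_\Sigma$. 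This also corrects the role of the hypothesis: $\tb(\bd D) < -1$ is needed so that $|\bd D \cap \Gamma_\Sigma| \geq 4$, which guarantees the two points picked up at the two ends are \emph{distinct}; when $\tb(\bd D) = -1$ there are only two points of $\Gamma_\Sigma$ on $\bd D$, both extensions run into the same point, and no embedded attaching arc with three crossings exists --- which is why the statement genuinely fails at $\tb = -1$ rather than merely being harder to prove. Your stated use of the hypothesis (an adjacent dividing arc of $\Gamma_D$ ``supplies the third crossing'') conflates intersections with $\Gamma_D$ and with $\Gamma_\Sigma$, and only appears to close the argument because of the false identification above.
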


%------------------------------------------------------------------
\subsection{Bypasses and contact isotopy} \label{sec:bypass}
%------------------------------------------------------------------
We continue to review the properties of bypasses. Let $\Sigma$ be a convex surface and $D$ be a bypass for $\Sigma$. Suppose the attaching arc of $D$ passes three dividing curves $d_1$, $d_2$ and $d_3$ consecutively. We say the bypass $D$ is \dfn{effective} if $d_2$ is different from $d_1$ and $d_3$. Honda showed \cite{Honda:classification1} attaching an effective bypass to a torus will decrease the number of dividing curves if $d_1$, $d_2$ and $d_3$ are all different, or change the dividing slope of $T$ if $d_1$ and $d_3$ are the same (Theorem~\ref{thm:bypass-torus}). 

Suppose $D$ is a non-effective bypass for a convex surface $\Sigma$ and let $\Sigma'$ be the resulting convex surface after attaching the bypass $D$ to $\Sigma$. Define $|\Gamma_{\Sigma}|$ to be the number of dividing curves on $\Sigma$. There are three types of non-effective bypasses for $\Sigma$ according to the effect on the dividing set:

\begin{enumerate}
  \item $\Gamma_{\Sigma'} = \Gamma_\Sigma$,
  \item $\Gamma_{\Sigma'}$ contains a contractible closed curve and $|\Gamma_{\Sigma'}| > |\Gamma_{\Sigma}|$,
  \item $\Gamma_{\Sigma'} \neq \Gamma_\Sigma$ and $|\Gamma_{\Sigma'}| \geq |\Gamma_{\Sigma}|$.
\end{enumerate} 

See Figure~\ref{fig:non-effective} for the first two cases. Recall that Giroux \cite{Giroux:criteria} proved that an $I$-invariant neighborhood of a convex surface $\Sigma$ is tight if and only if $\Sigma \not\cong S^2$ and there is no closed contractible dividing curve on $\Sigma$, or $\Sigma \cong S^2$ and there is a single dividing curve on $\Sigma$. Thus the second type of bypasses does not occur in a tight contact structure. If a bypass does not change the dividing set, we call it a \dfn{trivial bypass}. Honda \cite{Honda:bypass} showed that a trivial bypass is indeed trivial. 

\begin{lemma}[Honda \cite{Honda:bypass}]\label{lem:trivial}
  Suppose $\Sigma$ is a convex surface which is closed or compact with Legendrian boundary. If $D$ is a trivial bypass for $\Sigma$, then a neighborhood $N(\Sigma \cup D)$, which is a result of the bypass attachment, is an $I$-invariant neighborhood of $\Sigma$. 
\end{lemma}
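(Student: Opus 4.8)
The plan is to prove this by localizing the bypass attachment to a $3$--ball and then appealing to Eliashberg's uniqueness of the tight contact structure on $B^3$. The guiding principle is that a bypass attachment alters the contact manifold only in an arbitrarily small neighborhood of the attaching arc: away from the attaching arc $\gamma$, the region $N(\Sigma \cup D)$ is tautologically a collar carrying the $I$--invariant contact structure. Consequently it suffices to understand the contact germ along $\gamma$, which lives in a ball $B \cong D^2 \times I$, where $D^2 = N(\gamma) \subset \Sigma$ is a disk neighborhood of the attaching arc and the $I$--factor records the attachment.

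First I would set up $B$ precisely. The bottom face $D^2 \times \{0\} \subset \Sigma$ is convex with dividing set $\Gamma_\Sigma \cap D^2$, consisting of the three arcs that $\gamma$ crosses, while the top face $D^2 \times \{1\} \subset \Sigma'$ is convex with dividing set $\Gamma_{\Sigma'} \cap D^2$. This is where the hypothesis enters: since $D$ is a trivial bypass, $\Gamma_{\Sigma'} = \Gamma_\Sigma$, so the top and bottom faces carry \emph{identical} dividing arcs. The side $\bd D^2 \times I$ is $I$--invariant by construction, its vertical dividing arcs joining the endpoints on $\bd D^2 \times \{0\}$ to those on $\bd D^2 \times \{1\}$.

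Next I would identify the dividing set $\Gamma_{\bd B}$ on $\bd B \cong S^2$. After Legendrian--realizing the edges and applying Honda's edge--rounding \cite{Honda:classification1} at the corners $\bd D^2 \times \{0,1\}$, the arcs on the three faces assemble into $\Gamma_{\bd B}$. Because the bypass is trivial, the top and bottom dividing sets coincide and the side is $I$--invariant, so $\bd B$ carries exactly the boundary configuration of the corresponding sub--ball of a genuine $I$--invariant collar of $\Sigma$. That model sub--ball is a standard neighborhood of an arc on a convex surface, hence tight, so by Giroux's criterion \cite{Giroux:criteria} its boundary $S^2$ has a single dividing curve; therefore so does $\bd B$, and $B$ is tight as well. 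By Eliashberg's uniqueness of the tight contact structure on $B^3$ with prescribed convex boundary, the contact structure on $B$ is isotopic rel $\bd B$ to that of the model. Gluing this identification to the manifestly $I$--invariant complement of $\gamma$ exhibits all of $N(\Sigma \cup D)$ as an $I$--invariant neighborhood of $\Sigma$, as desired.

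The main obstacle is making the localization rigorous and carrying out the edge--rounding comparison: one must verify that the attachment really is supported in $N(\gamma) \times I$, that its complement is honestly a product collar, and that the triviality hypothesis $\Gamma_{\Sigma'} = \Gamma_\Sigma$ forces $\bd B$ to carry precisely one dividing curve (matching the $I$--invariant model), rather than the multi--curve or contractible--curve configurations produced by effective bypasses or by the type--(2)/(3) non--effective bypasses discussed above. Once this boundary bookkeeping is settled, the remaining steps — invoking Giroux's criterion and Eliashberg's ball--uniqueness — are formal.
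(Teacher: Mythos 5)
There is a genuine gap, and it sits exactly where you localize. Your key step asserts that because $D$ is trivial, i.e.\ $\Gamma_{\Sigma'} = \Gamma_\Sigma$, the top and bottom faces of $B = N(\gamma) \times I$ carry \emph{identical} dividing arcs. This is false: the equality $\Gamma_{\Sigma'} = \Gamma_\Sigma$ holds only up to isotopy of the full multicurve on $\Sigma$, and that isotopy is \emph{not} supported in $N(\gamma)$. Inside a disk neighborhood of the attaching arc, the effect of any bypass attachment is the universal local move of Figure~\ref{fig:bypass-attachment}, which genuinely changes the chord diagram of the three arcs rel $\bd D^2$ (if it did not, no bypass could ever change a dividing set, contradicting Theorem~\ref{thm:bypass-torus}). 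Triviality is detected only semi-globally, by how the three arcs that $\gamma$ crosses close up into dividing curves outside $N(\gamma)$. Consequently, the localized ball $B$, its boundary sphere, and its (single) dividing curve are \emph{the same} for a trivial bypass as for an effective one; your boundary bookkeeping cannot distinguish the two cases, and if the Eliashberg comparison went through as written it would prove that \emph{every} bypass attachment yields an $I$-invariant layer, which is absurd. Concretely, the comparison with the ``model sub-ball of an $I$-invariant collar'' fails rel the gluing region: fixing the bottom face and the side annulus (as one must, to reglue to the manifestly invariant complement), the top-face arc pattern of the bypass ball is the moved pattern, not the invariant one, so no identification of boundary data exists and Eliashberg's uniqueness rel boundary never gets off the ground. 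You partially flag this as the ``main obstacle,'' but it is not a bookkeeping issue to be settled — it is insurmountable within a purely local argument. (A secondary point: Eliashberg's theorem also requires tightness of $B$, which you assert rather than establish, though for the standard local model this part is repairable.)

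For comparison, the paper does not prove this lemma; it quotes Honda, and Honda's actual argument is structured to capture precisely the semi-global information your localization discards. The two inputs are: (i) the \emph{existence} of trivial bypasses (the ``right-to-life'' principle) — for a trivial attaching arc $\gamma$, one can find a bypass along $\gamma$ \emph{inside} an $I$-invariant collar of $\Sigma$, and this is where the trivial configuration of $\gamma$ relative to the dividing curves (doubling back on a single curve) is used in an essential way; and (ii) \emph{uniqueness} of the contact germ of a bypass attachment along a given arc from a given side, so that your given attachment layer is contactomorphic to the one found inside the invariant collar, after which one checks the resulting sub-layer of the collar is itself invariant. If you want to salvage your approach, you would have to enlarge $D^2$ to contain the support of the isotopy realizing $\Gamma_{\Sigma'} = \Gamma_\Sigma$ (so that the top and bottom patterns really do agree rel $\bd D^2$), and then you would still need tightness of the enlarged layer — which in practice is established via (i), at which point you have reconstructed Honda's proof rather than bypassed it.
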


\begin{figure}[htbp]
  \vspace{0.2cm}
  \begin{overpic}[tics=20]{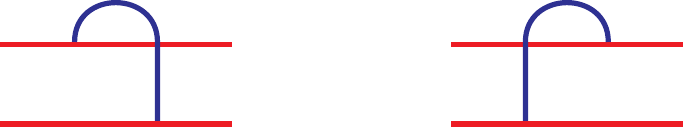}
  \end{overpic}
  \vspace{0.1cm}
  \caption{Two attaching arcs of non-effective bypasses.}
  \label{fig:non-effective}
\end{figure}

A \dfn{rotative layer} is a tight contact structure on $T^2 \times I$ with convex boundary such that the dividing slopes of $T^2 \times \{0\}$ and $T^2 \times \{1\}$ are different. Also, a \dfn{non-rotative layer} is a tight contact structure on $T^2 \times [0,1]$ with convex boundary such that any convex tori parallel to the boundary have the same dividing slope. Non-rotative layers were studied in \cite{Honda:non-rotative, HKM:attach=dig}. One useful property is the attach=dig principle. We introduce a version of the principle for a simple case.

\begin{theorem}[The attach=dig principle, Honda--Kazez--Mati\'c \cite{HKM:attach=dig}]\label{thm:attach=dig}
  Let $(T^2 \times [0,4], \xi)$ be a rotative layer. Denote $T^2 \times \{i\}$ by $T_i$ for $i \in \mathbb{Z}$ and suppose they are convex. Let $s_i$ and $n_i$ be the dividing slope and the number of dividing curves on $T_i$, respectively. Suppose $s_0 < s_2 < s_4$. Then after contact isotopy relative to $T_2$ and the boundary, $T^2 \times [1,3]$ becomes an $I$-invariant neighborhood with $s_1 = s_2 = s_3$ and $n_1 = n_3 = 2$. Also, $T_1$ can be obtained by attaching a sequence of bypasses from the back of $T_2$, and $T_3$ can be obtained by attaching a sequence of bypasses from the front of $T_2$.
\end{theorem}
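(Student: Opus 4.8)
The plan is to reduce the statement to Honda's classification of tight contact structures on $T^2\times I$ \cite{Honda:classification1}, analyzing the two halves $L^- = T^2\times[0,2]$ and $L^+ = T^2\times[2,4]$ separately and then gluing the resulting isotopies along $T_2$. After a $C^0$-small perturbation I may assume $n_2 = 2$; this is forced anyway by the desired conclusion, since an $I$-invariant neighborhood of $T_2$ has $n_1 = n_3 = n_2$. Because every convex torus parallel to the boundary of a tight $T^2\times I$ has dividing slope on the minimal Farey path between the two boundary slopes, the hypothesis $s_0 < s_2 < s_4$ together with tightness guarantees that $s_2$ lies on the minimal path from $s_0$ to $s_4$; hence $L^-$ is a rotative layer from $s_0$ to $s_2$ and $L^+$ is a rotative layer from $s_2$ to $s_4$. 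By the mirror symmetry interchanging ``front'' and ``back'', it suffices to analyze $L^+$.

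First I would produce the bypass description of the ``also'' clause. By Honda's classification \cite{Honda:classification1}, the rotative layer $L^+$ factors into basic slices along the minimal path $s_2 = r_0, r_1, \dots, r_m = s_4$ in the Farey graph; concretely there is a nested family of convex tori $T_2 = P_0, P_1, \dots, P_m = T_4$ with $P_i$ of slope $r_i$ and $n = 2$, where each $P_{i+1}$ is obtained from $P_i$ by attaching a single bypass from the front. Such bypasses are supplied by boundary-parallel dividing arcs on a convex vertical annulus running from $P_i$ to $T_4$, and their effect on the dividing slope is governed by Theorem~\ref{thm:bypass-torus}, each step advancing the slope one edge clockwise along the path. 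Since the original $T_3$ is a convex torus whose slope $s_3$ lies on this path, the uniqueness part of Honda's classification shows $T_3$ is contact isotopic rel $\bd L^+$ to the corresponding $P_j$; hence $T_3$ is obtained from $T_2$ by a sequence of bypasses attached from the front, and symmetrically $T_1$ from the back of $T_2$.

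Next I would establish the $I$-invariance after isotopy. Take the standard contact neighborhood of the convex torus $T_2$ inside $L^+$ (Giroux): because $T_2$ has two dividing curves of slope $s_2$, this neighborhood is an $I$-invariant layer $T^2\times[2, 2{+}\delta]$ of constant slope $s_2$, and its far boundary $T_3'$ is a convex torus of slope $s_2$ with $n=2$ cobounding an $I$-invariant layer with $T_2$, exactly as in Lemma~\ref{lem:trivial}. The complementary layer $T^2\times[2{+}\delta, 4]$ is again a rotative layer from $s_2$ to $s_4$ realizing the same minimal Farey path and the same basic-slice sign data as $L^+$, since both restrict the single contact structure $\xi$. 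By the uniqueness half of Honda's classification there is then a contact isotopy of $L^+$, fixed on $\bd L^+ = T_2\cup T_4$, carrying the original product decomposition to the one whose torus at level $3$ is $T_3'$; equivalently, after this isotopy the convex torus at level $3$ has slope $s_2$ and $n=2$, and $T^2\times[2,3]$ is $I$-invariant. Running the mirror construction on $L^-$ and gluing the two isotopies along $T_2$ yields a contact isotopy of $T^2\times[0,4]$, relative to $T_2$ and the boundary, after which $T^2\times[1,3]$ is an $I$-invariant neighborhood of $T_2$ with $s_1 = s_2 = s_3$ and $n_1 = n_3 = 2$.

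The main obstacle is this last step: making precise, and compatible on the two sides, the uniqueness that lets me replace the original $T_3$ of slope $s_3$ by the parallel copy $T_3'$ of slope $s_2$ through a contact isotopy that is genuinely relative to $T_2$ and to the outer boundary. This is where the real force of Honda's classification enters — that a rotative layer is determined up to isotopy rel boundary by its boundary slopes together with the signs on its basic slices — together with the triviality of trivial bypasses (Lemma~\ref{lem:trivial}); the rotation of each half is simply shuffled outward into the collars $T^2\times[0,1]$ and $T^2\times[3,4]$ while the inner region collapses to an $I$-invariant neighborhood. The bookkeeping of the basic-slice signs, and the verification that the isotopies on $L^-$ and $L^+$ agree on $T_2$ so that they glue, are the points that require the most care.
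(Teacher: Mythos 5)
The paper does not prove this statement at all --- it is quoted from Honda--Kazez--Mati\'c \cite{HKM:attach=dig} as background --- so the comparison is with the cited source; but your proposal has a genuine gap that no amount of polishing fixes. The fatal step is the opening reduction ``after a $C^0$-small perturbation I may assume $n_2=2$.'' First, the number of dividing curves on a fixed convex surface cannot be decreased by a small perturbation: a $C^0$-small isotopy can only \emph{create} pairs of dividing curves (folding along a Legendrian divide), while removing a pair requires attaching an effective bypass, which is a large move and is exactly what this theorem is supposed to supply. Second, the isotopy in the conclusion is relative to $T_2$, so you are not permitted to touch $T_2$ at all. Third, your justification --- that $n_2=2$ is ``forced by the desired conclusion since an $I$-invariant neighborhood of $T_2$ has $n_1=n_3=n_2$'' --- misreads the statement: an $I$-invariant layer whose level tori have two dividing curves contains many convex parallel tori with more dividing curves (folded copies), so the conclusion is perfectly consistent with $n_2>2$. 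Indeed the case $n_2>2$ is the entire point: the paper invokes this theorem precisely when $\partial N_i$ has more than two dividing curves (proof of Proposition~\ref{prop:unknot}) in order to thicken and reduce the number of dividing curves. Under your assumption $n_2=2$, the invariant-collar argument you give makes the statement a near-triviality (the bypasses in the ``also'' clause become trivial ones in the sense of Lemma~\ref{lem:trivial}), and the hypothesis $s_0<s_2<s_4$ --- whose role is to guarantee, via the rotation on \emph{each} side of $T_2$, the existence of effective bypasses that can be attached directly to $T_2$ to dig $\Gamma_{T_2}$ down to two curves without changing the slope --- is never used in an essential way.

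Two secondary points. Your claim that every convex torus parallel to the boundary of a tight $T^2\times I$ has slope a vertex of the minimal Farey path is false: intermediate convex tori realize every slope in the clockwise arc between the boundary slopes (e.g.\ slope $-3/2$ inside a basic slice with boundary slopes $-2$ and $-1$); moreover a rotative layer as defined in this paper is only assumed universally tight, not minimally twisting, so the basic-slice factorization along the minimal path needs a preliminary reduction. These are repairable, and your collar-plus-uniqueness scheme for the halves $L^\pm$ is a reasonable skeleton once $n_2=2$ is \emph{known}; but be careful even there that an ambient contact isotopy preserves dividing sets, so it cannot carry the original level torus $T_3$ (slope $s_3$) to a torus of slope $s_2$ --- the conclusion must be read as deforming the contact structure (or reparametrizing the product) rel $T_2$ and the boundary. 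The content your proposal never engages is the actual attach$\,$=$\,$dig mechanism of \cite{HKM:attach=dig}, resting on the factoring of non-rotative layers from \cite{Honda:non-rotative}: one shows that the rotation in front of (resp.\ behind) $T_2$ produces a sequence of effective bypasses attachable from the front (resp.\ back) of $T_2$ itself, equivalently dug out of the adjacent layer, terminating in convex tori $T_3$ (resp.\ $T_1$) with exactly two dividing curves of slope $s_2$ that cobound the desired $I$-invariant region containing $T_2$.
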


Let $\Sigma$ be a closed surface or a compact surface with boundary. Giroux \cite{Giroux:cmcg} showed that we can perturb a contact structure on $\Sigma \times [0,1]$ so that $\Sigma \times \{t\}$ are convex for all but finite $t \in [0,1]$, and a neighborhood of the non-convex $\Sigma \times \{t\}$ is contactomorphic to a bypass attachment. After that, Honda and Huang \cite{HH:parametric} generalized it to every dimension.

\begin{theorem}[Giroux \cite{Giroux:cmcg}, Honda--Huang \cite{HH:parametric}]\label{thm:parametric}
  Let $\xi$ be a contact structure on $\Sigma \times [0,1]$ such that $\Sigma \times \{0\}$ and $\Sigma \times \{1\}$ are convex. Then up to contact isotopy relative to the boundary, there exists a finite sequence $0 < t_1 < \cdots < t_n < 1$ such that  
  \begin{itemize}
    \item $\Sigma \times \{t\}$ is convex except for $t = t_i$.
    \item There exists $\epsilon > 0$ for each $i$ such that $\Sigma \times [t_i - \epsilon, t_i + \epsilon]$ is contactomorphic to a bypass attachment.
  \end{itemize}
\end{theorem}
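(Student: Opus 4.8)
The plan is to run Giroux's one-parametric convex surface theory: study the family of characteristic foliations $\mathcal{F}_t$ that $\xi$ induces on the slices $\Sigma_t := \Sigma \times \{t\}$, make this family generic by a contact isotopy rel boundary, and then identify each failure of convexity with a bypass attachment.

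First I would translate convexity into a dynamical condition on the characteristic foliation. By Giroux's convexity criterion, a slice $\Sigma_t$ with generic (Morse) characteristic foliation is convex exactly when $\mathcal{F}_t$ admits a dividing set, and Giroux characterizes this in dynamical terms: $\mathcal{F}_t$ must be Morse--Smale with no \emph{retrograde saddle--saddle connection}. Thus detecting convexity reduces to tracking the qualitative dynamics of the flow directing $\mathcal{F}_t$ as $t$ varies.

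Next I would make the one-parameter family $\{\mathcal{F}_t\}_{t\in[0,1]}$ generic, realizing the perturbation by a contact isotopy fixing $\Sigma\times\{0,1\}$. By the standard transversality theory of generic one-parameter families of flows (Cerf theory), the non-Morse--Smale parameter values are isolated, and the only codimension-one degeneracy that actually obstructs the existence of a dividing set is the momentary appearance of a retrograde saddle connection; births and deaths of singularities, and handle slides that preserve the ``no retrograde connection'' condition, keep the surface convex. Since $[0,1]$ is compact, only finitely many times $0<t_1<\dots<t_n<1$ carry such a degeneracy, and $\Sigma_t$ is convex for all other $t$.

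The heart of the argument, and the main obstacle, is the local analysis near each $t_i$. As $t$ increases through $t_i$, a retrograde saddle connection forms and then breaks, and the dividing set of $\Sigma_t$ changes only inside a neighborhood of the connecting Legendrian arc. Working in contact normal-form coordinates around this arc, I would show that the resulting slab $\Sigma\times[t_i-\epsilon,\,t_i+\epsilon]$ is contactomorphic to the standard half-disk bypass layer, with the connecting arc as the attaching arc, so that the local dividing-set change is precisely the bypass move of Figure~\ref{fig:bypass-attachment}. Establishing both the correct codimension of the ``bad'' stratum and this identification of the bifurcation with a bypass is exactly the content of Giroux's work in dimension three and of Honda--Huang's convex hypersurface theory in general; I would invoke those results and, for completeness, verify the local model directly in coordinates.
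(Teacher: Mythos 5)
The paper does not prove this statement: Theorem~\ref{thm:parametric} is quoted as background, with the proof deferred entirely to Giroux \cite{Giroux:classification} and Honda--Huang \cite{HH:parametric}, so there is no internal proof to compare yours against. That said, your sketch is a faithful outline of the argument in those sources: translate convexity of a slice into dynamics of the characteristic foliation via Giroux's criterion, make the movie $\{\mathcal{F}_t\}$ generic, observe that convexity persists through births/deaths of singularities and through non-retrograde saddle switchings and fails only at retrograde saddle--saddle connections, which are isolated by compactness, and finally identify the slab around each retrograde crossing with the standard bypass layer --- the last identification being Honda's reinterpretation of Giroux's bifurcation analysis, generalized to all dimensions by Honda--Huang.

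Two caveats. First, a small imprecision: a Morse--Smale foliation has no saddle--saddle connections by definition, so ``Morse--Smale with no retrograde saddle--saddle connection'' is redundant as a criterion; the correct formulation is that in a generic family the foliation fails to be Morse--Smale at isolated times, through either a degenerate (birth--death) singularity or a saddle--saddle connection, and the slice remains convex at such a time exactly when the connection, if any, is not retrograde. Second, be aware of the circularity in your final step: the codimension count for the bad stratum and the identification of the retrograde crossing with a bypass attachment \emph{are} the content of the theorem being proved, so ``invoking those results'' there amounts to citing the statement itself. A genuinely self-contained proof would have to carry out the normal-form computation near the connecting orbit that you defer, which is the technical heart of Giroux's paper; as a blind reconstruction of the cited proof's architecture, however, your outline is accurate.
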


Colin \cite{Colin:discretization} improved this result for a one-parameter family of embedded surfaces in a contact $3$--manifold.  

\begin{theorem}[Isotopy discretization, Colin \cite{Colin:discretization}, see also Honda \cite{Honda:bypass}]\label{thm:discretization}
  Let $(M,\xi)$ be a contact $3$--manifold and $\Sigma, \Sigma'$ be embedded convex surfaces in $M$ which are closed or compact with identical Legendrian boundary. Suppose $\Sigma$ and $\Sigma'$ are smoothly isotopic relative to boundary. Then there exists a finite sequence of convex surfaces $(\Sigma_1, \Sigma_2, \cdots, \Sigma_n)$ such that $\Sigma_1 = \Sigma$, $\Sigma_n = \Sigma'$ and $\Sigma_{i+1}$ is obtained by a single bypass attachment to $\Sigma_i$.  
\end{theorem}

We end this section by showing that if a contactomorphism fixes a convex surface or a Legendrian knot, then after contact isotopy, the contactomorphism also fixes a neighborhood of them. The second statement of Lemma~\ref{lem:fix} was proved in \cite[Lemma~6]{DG:S1xS2}, but we present a proof for completeness.

\begin{lemma}\label{lem:fix}
  Let $C$ be a subset in a compact contact $3$--manifold $(M,\xi)$ and $f\colon (M,\xi) \to (M,\xi)$ be a contactomorphism. Suppose $f|_C = id$. Then,  
  \begin{enumerate}
    \item if $C = \Sigma$ is a compact convex surface, then there exist an $I$-invariant neighborhood $N$ of $\Sigma$ and a contactomorphism $\widetilde{f}$ of $(M,\xi)$ such that $\widetilde{f}|_N = id$ and $\widetilde{f}$ is contact isotopic to $f$.  
    \item if $C = L$ is a Legendrian knot, then there exist a standard neighborhood $N$ of $L$ and a contactomorphism $\widetilde{f}$ of $(M,\xi)$ such that $\widetilde{f}|_N = id$ and $\widetilde{f}$ is contact isotopic to $f$.
  \end{enumerate}
\end{lemma}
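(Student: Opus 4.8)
The plan is to reduce both statements to a single \emph{germ} assertion --- that $f$ is contact isotopic to the identity on a neighborhood of $C$ through contactomorphisms --- and then to globalize by cutting off the generating contact Hamiltonian. Concretely, suppose we can produce a contact isotopy $\psi_t$, defined on a neighborhood $U$ of $C$ and through contactomorphisms onto their images, with $\psi_0 = \mathrm{id}$ and $\psi_1 = f$ near $C$. Writing $\psi_t$ as the flow of a time-dependent contact vector field $X_t$ on $U$ and $H_t = \alpha(X_t)$ for its contact Hamiltonian with respect to a contact form $\alpha$ for $\xi$, I would multiply $H_t$ by a bump function that is $\equiv 1$ on a smaller neighborhood $N$ of $C$ and supported in $U$. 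Since $M$ is compact the resulting global contact Hamiltonian integrates to a global contact isotopy $\Phi_t$ of $(M,\xi)$ with $\Phi_0 = \mathrm{id}$ and $\Phi_t = \psi_t$ on $N$ (for $N$ small enough that the relevant trajectories stay where the bump is $1$). Then $\widetilde f := \Phi_1^{-1}\circ f$ is contact isotopic to $f$ via $t \mapsto \Phi_t^{-1}\circ f$, and on $N$ we have $\Phi_1 = f$, whence $\widetilde f|_N = \mathrm{id}$. Everything thus comes down to the germ assertion, which I would treat separately in the two cases.

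For part (1), fix an $I$-invariant neighborhood $\Sigma \times \mathbb{R}$ of $\Sigma = \Sigma \times \{0\}$ coming from a contact vector field transverse to $\Sigma$, and let $\eta = \ker(\beta + u\,dt)$ be the corresponding $t$-invariant model. Conjugating by this parametrization turns $f$ into a germ-contactomorphism $F$ of $(\Sigma \times \mathbb{R}, \eta)$ that fixes $\Sigma \times \{0\}$ pointwise; since $F|_\Sigma = \mathrm{id}$ it preserves the characteristic foliation of $\Sigma$, which by the contact neighborhood theorem for surfaces is the complete invariant of this germ. It therefore suffices to connect $F$ to the identity through germ-contactomorphisms of $\eta$. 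I would do this in two steps: first kill the $1$-jet of $F$ along $\Sigma$ --- the linear contact automorphisms of $(T_pM, \xi_p)$ fixing $T_p\Sigma$ pointwise form a connected group, and a path to the identity can be realized as the flow of explicit contact Hamiltonians vanishing on $\Sigma$ --- and then connect the resulting map, which agrees with the identity to first order along $\Sigma$, to the identity by a Moser-type argument contracting toward $\Sigma$. Transporting the isotopy back through the parametrization yields the required local $\psi_t$.

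For part (2) I would run the same template with the Legendrian (Weinstein) neighborhood theorem in place of the convex-surface neighborhood theorem. A standard neighborhood of $L$ is determined by $L$ together with its contact framing, i.e.\ by $tb$; since $f$ fixes $L$ pointwise and preserves $\xi$, it preserves this framing and hence carries one standard neighborhood of $L$ to another with the same invariant. The parametric Weinstein theorem then connects $f$ to the identity near $L$ through contactomorphisms, and cutting off the contact Hamiltonian as above produces $\widetilde f$. This is essentially the argument of \cite[Lemma~6]{DG:S1xS2}.

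The main obstacle is the germ assertion itself, and within it the passage from ``the germ of $\xi$ is determined up to contactomorphism by the characteristic foliation (resp.\ the contact framing)'' to the genuinely \emph{parametric} statement that $f$ lies in the identity component of the germ contactomorphism group --- that is, the $1$-jet-killing step together with the Moser contraction, and the verification that the linear data can be trivialized through contact automorphisms rather than merely smooth ones. A secondary point requiring care is the globalization: one must choose $N$ small enough, and arrange the cutoff so that the trajectories of the local isotopy through $N$ remain in the region where the bump function equals $1$, so that $\Phi_1$ really equals $f$ on $N$.
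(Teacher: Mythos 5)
Your global architecture is sound and matches the paper's: both reduce to producing a local isotopy of contact embeddings from $f|_N$ to the identity and then globalizing, and your cutoff-of-the-Hamiltonian step is precisely the proof of the contact isotopy extension theorem that the paper invokes at this point. For part (2), deferring to \cite[Lemma~6]{DG:S1xS2} is legitimate --- the paper itself credits that lemma and reproves it only for completeness, via the dilation $\delta_s(x,y,z)=(x,sy,sz)$, which satisfies $\delta_s^*(dz-y\,dx)=s(dz-y\,dx)$ and so realizes exactly your ``linearize, then contract'' template in the Legendrian model.

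The genuine gap is in part (1), in both halves of your germ argument. First, the flow of a contact Hamiltonian $H$ that merely vanishes on $\Sigma$ does not fix $\Sigma$ pointwise: $H|_\Sigma=0$ only forces $X_H|_\Sigma$ to lie in $\xi$, and at points where $\xi_p \neq T_p\Sigma$ the field $X_H$ can have a nonzero component transverse to $\Sigma$ --- such flows are exactly how one moves surfaces in convex surface theory. To fix $\Sigma$ pointwise you need $X_H|_\Sigma=0$, i.e., $H|_\Sigma=0$ \emph{and} $dH|_{\xi_p}=0$ for all $p\in\Sigma$, so your 1-jet-killing step requires Hamiltonians vanishing to higher order along $\Sigma$, and whether their flows realize an arbitrary path of the required 1-jets is not addressed. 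Second, and more seriously, the ``Moser-type argument contracting toward $\Sigma$'' has no available contraction: in the $I$-invariant model $\ker(\beta+u\,dt)$ the transverse dilation $(p,t)\mapsto(p,st)$ pulls the contact form back to $\beta+su\,dt$, which is not conformally equivalent to $\beta+u\,dt$ and degenerates (to $\ker\beta$, not a contact structure) as $s\to0$; so the Alexander-type contraction that makes the Legendrian case work has no analogue for convex surfaces. This is precisely the difficulty the paper's proof is designed to avoid: it interpolates the two transverse contact vector fields linearly, $v_s=s\,\partial_t+(1-s)f_*(\partial_t)$, which is contact for every $s$ because $\mathcal{L}_{v}\alpha$ is linear in $v$ with $\mathcal{L}_{v_0}\alpha=0$ and $\mathcal{L}_{v_1}\alpha=\lambda\alpha$, and then flows out of $\Sigma$ along $v_s$ to obtain the isotopy of contact embeddings directly (using $f|_\Sigma=\mathrm{id}$ to see that the pulled-back structures all agree with $\xi|_N$), with no jet analysis and no Moser step. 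Until you supply a working substitute for the contraction, part (1) is unproven.
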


\begin{proof}
  Consider the case $C = \Sigma$ first. Take an $I$-invariant neighborhood of $\Sigma$ which is strictly contactomorphic to $(\Sigma \times \mathbb{R},\, \beta + u\,dt)$ where $\Sigma = \Sigma \times \{0\}$, $\beta \in \Omega^1(\Sigma)$ and $u\colon \Sigma \to \mathbb{R}$. Take another small neighborhood $N = \Sigma \times [-\epsilon, \epsilon]$ satisfying $f(N) \subset \Sigma \times \mathbb{R}$.  
  We will use the following strategy: we will find an isotopy of contact embeddings $f_s\colon (N,\xi|_N) \to (M,\xi)$ where $f_0 = f|_N$ and $f_1 = id$. According to the contact isotopy extension theorem~\cite[Theorem~2.6.12]{Geiges:book}, there exists a contact isotopy $\phi_s\colon (M,\xi) \to (M,\xi)$ satisfying $\phi_0 = id$ and $\phi_s \circ f_0 = f_s$. Then $\widetilde{f} := \phi_1 \circ f$ is our desired contactomorphism.

  Let $v_0 := \partial_t$ and $v_1 := f_*(\partial_t)$. It is not hard to check $\mathcal{L}_{v_0}\alpha = 0$ and $\mathcal{L}_{v_1}\alpha = \lambda\,\alpha$ where $\alpha = \beta + u\,dt$ and $\lambda\colon \Sigma \times \mathbb{R} \to \mathbb{R}$, so both $v_0$ and $v_1$ are contact vector fields transverse to $\Sigma$. Notice that $v_1$ is well-defined on $f(N)$ but we can extend it to entire $\Sigma \times \mathbb{R}$ by extending the corresponding contact Hamiltonian. Now we define $v_s := sv_0 + (1-s)v_1$. Then for every $s \in [0,1]$, the vector field $v_s$ is also a contact vector field since 
  \begin{align*}
    \mathcal{L}_{v_s}\alpha &= s\mathcal{L}_{v_0}\alpha + (1-s)\mathcal{L}_{v_1}\alpha\\
    &= (1-s)\lambda\,\alpha.
  \end{align*} 
  Let $\psi^t_s$ be the flow of $v_s$. Since $v_0 = \partial_t$, we have 
  \[
    \psi^t_0(p,0) = (p,t) \,\text{ for }\, (p,t) \in \Sigma \times \mathbb{R}.
  \] 
  Also, since $v_1 = f_*(v_0)$ on $N$ and $f|_{\Sigma} = id$, we have 
  \begin{align*}
    \psi^t_1(p,0) &= f \circ \psi^t_0 \circ f^{-1}(p,0)\\
    &= f \circ \psi^t_0(p,0)\\    % naturality of flows. Corollary 9.14 of Lee, smooth manifolds
    &= f(p,t)
  \end{align*}
  for $(p,t) \in N$. Define an isotopy of contact embeddings $f_s(p,t) := \psi^t_{1-s}(p,0)$ for $(p,t) \in N$ and it is our desired isotopy.

  Now consider the case $C = L$. Take a standard neighborhood of $L$ which is strictly contactomorphic to $(S^1 \times \mathbb{R}^2, dz - y\,dx)$ where $x \sim x+1$ is the coordinate on $S^1 = \mathbb{R}/\mathbb{Z}$, the pair $(y,z)$ is the coordinates on $\mathbb{R}^2$ and $L$ is identified with $S^1 \times \{0\}$. Take another standard neighborhood $N = S^1 \times D^2$ where $D^2$ is a small disk containing the origin such that $f(N) \subset S^1 \times \mathbb{R}^2$. We will follow the same strategy as in the case of $C = \Sigma$. That is, it is enough to find an isotopy of contact embeddings $f_s\colon (N,\xi|_N) \to (M,\xi)$ satisfying $f_0 = f|_N$ and $f_1 = id$. 

  We can write $f|_N$ in the form 
  \[
    f|_N(x,y,z) = (u(x,y,z),v(x,y,z),w(x,y,z))
  \] 
  where $u\colon N \to S^1$, $v\colon N \to \mathbb{R}$ and $w\colon N \to \mathbb{R}$. In the local coordinates, we can rewrite the condition $f^*(\alpha) = \lambda\alpha$ where $\lambda\colon N \to \mathbb{R}^+$ for $f|_N$ to be a contact embedding as follows: 
  \[
    dw - v\,du = \lambda(dz - y\,dx),
  \]
  which is equivalent to 
  \begin{equation}\label{eq:contact}
  \left\{\begin{array}{rcl}  
    \displaystyle \frac{\partial w}{\partial x} - v\frac{\partial u}{\partial x} & = & -\lambda y,\\ 
    \displaystyle \frac{\partial w}{\partial y} - v\frac{\partial u}{\partial y} & = & 0,\rule{0cm}{9mm}\\
    \displaystyle \frac{\partial w}{\partial z} - v\frac{\partial u}{\partial z} & = & \lambda.\rule{0cm}{9mm}
  \end{array}\right. 
  \end{equation} 
  Since $f|_L = id$, we have
  \begin{equation}\label{eq:fix}
    u(x,0,0) = x,\quad v(x,0,0) = w(x,0,0) = 0.
  \end{equation} 
  Notice that for $s>0$, the dilation $\delta_s(x,y,z) = (x, sy, sz)$ is a strict contactomorphism of $(S^1 \times \mathbb{R}^2, dz - y\,dx)$. Thus we have an isotopy of contact embeddings
  \[
    g_s := \delta_s^{-1} \circ f|_N \circ \delta_s (x,y,z) = (u(x,sy,sz), \frac{1}{s} v(x,sy,sz), \frac{1}{s} w(x,sy,sz)).
  \]
  Let $\lambda_0(x) := \lambda(x,0,0)$. Since $u$, $v$ and $w$ are $C^\infty$, we have
  \[
    g_0 := \lim_{s\to0}g_s = (x,\, y \cdot v_y(x,0,0) + z \cdot v_z(x,0,0),\, y \cdot w_y(x,0,0) + z \cdot w_z(x,0,0)).
  \] 
  Differentiate the first equation in (\ref{eq:contact}) with respect to $z$ and we obtain
  \[
    w_{xz} - v_z u_x - v u_{xz} = -y\lambda_z. 
  \]
  Evaluate this equation at $(x,0,0)$. Then by the equations in~(\ref{eq:fix}), we obtain 
  \begin{equation}\label{eq:1}
    v_z(x,0,0) = w_{xz}(x,0,0).
  \end{equation}
  Differentiate the third equation in (\ref{eq:contact}) with respect to $x$ and we obtain
  \[
    w_{xz} - v_x u_z - v u_{xz} = \lambda_x. 
  \]
  Evaluate this equation at $(x,0,0)$. Then by the equations in~(\ref{eq:fix}), we obtain 
  \begin{equation}\label{eq:2}
    w_{xz}(x,0,0) = \lambda_x(x,0,0) = \lambda'_0(x).
  \end{equation} 
  Differentiate the second equation in (\ref{eq:contact}) with respect to $x$ and we obtain
  \[
    w_{xy} - v_x u_y - v u_{xy} = 0. 
  \]
  Evaluate this equation at $(x,0,0)$. Then by the equations in~(\ref{eq:fix}), we obtain
  \begin{equation}\label{eq:wxy}
    w_{xy}(x,0,0)=0.
  \end{equation}
  Differentiate the first equation in (\ref{eq:contact}) with respect to $y$ and we obtain
  \[
    w_{xy} - v_y u_x - v u_{xy} = -\lambda - y\lambda_y. 
  \]
  Evaluate this equation at $(x,0,0)$. Then by the equations in~(\ref{eq:fix})  and (\ref{eq:wxy}), we obtain 
  \begin{equation}\label{eq:3}
    v_y(x,0,0) = \lambda(x,0,0) = \lambda_0(x).
  \end{equation}  
  Evaluate the equations in~(\ref{eq:contact}) at $(x,0,0)$. Then by the equations in~(\ref{eq:fix}), we obtain
  \begin{equation}\label{eq:4}
    w_y(x,0,0) = 0, \quad w_z(x,0,0) = \lambda_0(x). 
  \end{equation}
  Finally, from the equations~(\ref{eq:1}),~(\ref{eq:2}),~(\ref{eq:3}) and~(\ref{eq:4}), we obtain 
  \[
    g_0(x,y,z) = (x,\, y \cdot \lambda_0(x) + z \cdot \lambda_0'(x),\, z \cdot \lambda_0(x)),
  \]
  which is a contact embedding from $(N,\xi|_N)$ to $(S^1 \times \mathbb{R}^2, dz - y\,dx)$. Now define $\lambda_s(x) := s + (1-s)\lambda_0(x)$. Then we can define another isotopy of contact embeddings as follows:  
  \[
    h_s(x,y,z) := (x,\, y \cdot \lambda_s(x) + z \cdot \lambda_s'(x),\, z \cdot \lambda_s(x)).
  \]
  Let $f_s$ be a concatenation of $g_{1-s}$ and $h_s$. This is our desired isotopy.
\end{proof}

%-------------------------------------------------------------------------------------------------------------
\subsection{Tight contact structures on a solid torus and lens spaces} \label{sec:lens}
%-------------------------------------------------------------------------------------------------------------
Consider a tight contact structure $\xi$ on $T(s_1,s_2) = T^2 \times I$ with a characteristic foliation $\mathcal F$ on the boundary that is divided by two dividing curves of slope $s_i$ on $T^2 \times \{i\}$ for $i=0,1$, where $s_0$ and $s_1$ are connected by an edge in the Farey graph. We say that a contact structure $\xi$ is \dfn{minimally twisting} if for any boundary-parallel convex torus $T$ in $\xi$, the dividing slope is clockwise of $s_0$ and counterclockwise of $s_1$ in the Farey graph.

\begin{theorem}[Giroux \cite{Giroux:classification}, Honda \cite{Honda:classification1}]\label{thm:basic-slice}
  If $T(s_1,s_2)$ and $\mathcal F$ are as above, then there exist exactly two minimally twisting tight contact structures on $T(s_1,s_2)$ that induce $\mathcal F$ on the boundary, up to isotopy fixing $\mathcal F$.
\end{theorem}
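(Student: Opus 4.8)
The plan is to prove the sharp count---exactly two---by separately establishing an upper bound (at most two) via convex surface theory on a spanning annulus, and a lower bound (at least two) via the relative Euler class. First I would normalize the setup: since $s_0$ and $s_1$ are joined by an edge of the Farey graph, i.e. $|s_0 \bigcdot s_1| = 1$, there is an element of $SL(2,\mathbb{Z})$, acting by a diffeomorphism of $T^2 \times I$ isotopic to a product, carrying the pair $(s_0,s_1)$ to a standard adjacent pair such as $(-1,0)$; the number of dividing curves is fixed to two on each boundary by the hypothesis on $\mathcal F$. Next I would choose a slope $r$ that is a common Farey neighbor of both $s_0$ and $s_1$ (one of the two completions of the edge $s_0 s_1$ to a triangle) and use the Legendrian realization principle to arrange ruling curves of slope $r$ on the boundary tori. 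A vertical annulus $A$ connecting a ruling curve $\gamma_0 \subset T_0$ to a ruling curve $\gamma_1 \subset T_1$ can then be perturbed to be convex with Legendrian boundary. Because $|r \bigcdot s_i| = 1$ and each $T_i$ has two dividing curves, each $\gamma_i$ meets $\Gamma_{T_i}$ in exactly two points, so \eqref{eq:tb} gives $tb(\gamma_i) = -1$ and $|\partial A \cap \Gamma| = 4$.

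The core step is to pin down $\Gamma_A$. I would first rule out closed dividing curves: a contractible one is forbidden by tightness (Giroux's criterion, as recalled above), while a closed curve parallel to the core of $A$ would produce a boundary-parallel convex torus of the \emph{exterior} slope $r$ (which is not interior to the arc from $s_0$ to $s_1$), contradicting minimal twisting. Hence $\Gamma_A$ consists of two arcs with the four endpoints distributed two per boundary circle. The only two combinatorial types are (a) two arcs running from $T_0$ to $T_1$, and (b) one boundary-parallel arc hugging $T_0$ and one hugging $T_1$. Type (a) is excluded because cutting along $A$ and edge-rounding would leave the dividing slope unchanged, forcing $s_0 = s_1$; so $\Gamma_A$ must be of type (b). The two half-disks cut off by these arcs necessarily carry the \emph{same} sign---the regions of $A \setminus \Gamma_A$ alternate, and the two half-disks are separated by a single connected middle region---so the only remaining datum is one sign $\epsilon \in \{+,-\}$.

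To finish the upper bound, I would cut $T(s_0,s_1)$ along $A$ and round edges to obtain a tight contact solid torus whose dividing set is completely determined by $\epsilon$; cutting once more along a convex meridian disk, whose dividing set is again forced, reduces to a tight contact ball, which carries a unique tight structure by Eliashberg's theorem. Reassembling shows the isotopy class rel $\mathcal F$ is determined by $\epsilon$, giving at most two structures. For the lower bound, both signs are realized---for instance as the two bypass attachments of Theorem~\ref{thm:bypass-torus} (from the front and from the back), which each take slope $s_0$ to slope $s_1$---and they are distinguished by the relative Euler class: evaluating $\chi(A_+) - \chi(A_-)$ on the annulus $A$ yields $\pm$ a fixed nonzero value according to $\epsilon$. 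Since the relative Euler class is an invariant of the contact structure rel the boundary foliation, the two structures are not isotopic, and the count is exactly two.

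I expect the main obstacle to be the upper-bound reduction in the last paragraph: controlling the dividing data after each cut and certifying that $\epsilon$ is the \emph{only} remaining freedom---that no hidden choices appear when passing to the cut-open solid torus and then to the ball---requires care that $\Gamma_A$ together with the edge-rounding genuinely determines the dividing sets of the smaller pieces, and that tightness is preserved throughout. By contrast the distinctness (lower bound) is comparatively clean once the relative Euler class is computed, since it is a bona fide rel-boundary invariant.
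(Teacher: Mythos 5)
This theorem is quoted from Honda's classification paper and not proved in the present paper, so the benchmark is Honda's argument, whose overall architecture --- normalize by $SL(2,\mathbb{Z})$, analyze a spanning annulus $A$ with $tb=-1$ boundary, cut down to a solid torus, and separate the two structures by the relative Euler class --- your proposal does share. However, there is a genuine gap at the decisive step, the determination of $\Gamma_A$, and it comes from not fixing which of the two common Farey neighbors you take as the ruling slope $r$. If $r$ is the neighbor \emph{not} lying in the minimally twisting arc between $s_0$ and $s_1$ (e.g.\ $r=\infty$ for $(s_0,s_1)=(-1,0)$, which is Honda's choice), then minimal twisting excludes your type (b), not type (a): a boundary-parallel dividing arc on $A$ is a bypass whose attaching arc has slope $r$, and since $s_i$ and $r$ are Farey-adjacent, Theorem~\ref{thm:bypass-torus} says attaching it produces a convex torus of slope exactly $r$, which lies outside the allowed arc --- contradiction. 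So the two arcs must traverse, and your stated reason for excluding the traversing configuration --- that cutting and edge-rounding would ``leave the dividing slope unchanged, forcing $s_0=s_1$'' --- is not a valid deduction: Honda's proof cuts along precisely such an annulus, and edge-rounding yields a perfectly consistent solid torus with two dividing curves (of slope $-2$ in suitable coordinates), whose classification supplies exactly two tight structures. That solid torus, not a sign on $A$, is where the binary choice lives on this route; nothing in the rounding computation ever forces $s_0=s_1$.

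If instead you intend $r$ to be the neighbor strictly between $s_0$ and $s_1$, then your type-(b) picture is in fact what occurs in a basic slice, but now two of your earlier exclusions lose their force: a convex torus of slope $r$ no longer violates minimal twisting (it lies in the allowed arc), so your argument ruling out closed core-parallel components of $\Gamma_A$ evaporates, and the one-line exclusion of type (a) remains unsupported, so the reduction to a single sign $\epsilon$ is not established. Under either reading of ``one of the two completions,'' a decisive step is missing, and the two readings cannot be mixed as the write-up implicitly does. A related symptom: in the traversing configuration one has $\chi(A_+)-\chi(A_-)=1-1=0$, so the relative Euler class vanishes on the outside-neighbor annulus and must be evaluated on an annulus of \emph{interior} slope (where the boundary-parallel picture and the value $\pm 2$ do appear) --- which is how Honda actually distinguishes the two slices; your lower bound is therefore correct in spirit but is tied to the type-(b) configuration you have not justified. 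To repair the proof, either run the outside-$r$ analysis (traversing arcs forced by the bypass/minimal-twisting argument above, then cut and invoke the solid-torus count of Theorem~\ref{thm:classification-s1d2}), or genuinely prove type (b) for interior $r$, e.g.\ by first factoring through a convex torus of slope $r$.
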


The two contact structures given by Theorem~\ref{thm:basic-slice} are distinguished by their relative Euler class. Let $A = S^1 \times I \subset T^2 \times I$ be an oriented annulus where the slope of $S^1 \times \{0,1\}$ is $s_2$. We call a basic slice \dfn{positive} or \dfn{negative basic slices}, respectively, according to the sign of the relative Euler class evaluated on $A$.

Let $V = S^1 \times D^2$ and choose coordinates for $H_1(\bd V)$ such that $0$ is a longitude $S^1 \times \{p\}$ (product framing), and $\infty$ is a meridian. Let $p/q$ is a rational number and $k$ be the unique integer such that $\frac{p+kq}{q} \in [-1,0)$, and  
\[
  \frac q{p+kq} = [r_0, \ldots, r_n] = r_0-\frac{1}{r_1-\frac{1}{\cdots - \frac{1}{r_n}}}
\]
where $r_n \leq -1$, and $r_i \leq -2$ for $i=0,\ldots,n-1$.
  
\begin{theorem}[Giroux \cite{Giroux:classification}, Honda \cite{Honda:classification1}]\label{thm:classification-s1d2}
  Suppose $V = S^1 \times D^2$ with two dividing curves of slope $s = p/q$. Fix a characteristic foliation $\mathcal F$ on $\partial V$ that is divided by the dividing curves. Then  
  \begin{enumerate}
    \item there are $\left|(r_0+1)\cdots(r_{n-1}+1)r_n\right|$ tight contact structures up to isotopy fixing $\mathcal{F}$,
    \item there is one to one correspondence between tight contact structures on $V$ and $T(\lfloor s \rfloor,s)$,
    \item if $s \in \mathbb{Z}$, there is a unique tight contact structure and it is universally tight, 
    \item if $s \notin \mathbb{Z}$, there are exactly two universally tight contact structures,
    \item a tight contact structure on $V$ is universally tight if and only if it has the extremal relative Euler class, which evaluates to $\pm(|q| - 1)$ on a convex meridian disk $D$ of $V$ whose boundary intersects the dividing curves on $\partial V$ minimally. In this case, all dividing curves on $D$ is boundary-parallel. See Figure~\ref{fig:univ} for example.
  \end{enumerate} 
\end{theorem}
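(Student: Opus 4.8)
The plan is to combine two complementary viewpoints on a tight contact structure $\xi$ on $V = S^1 \times D^2$ with boundary slope $s = p/q$: cutting along a convex meridian disk, which gives the finite upper bound, and decomposing $\xi$ into a stack of basic slices along a path in the Farey graph, which supplies both the count and the distinguishing invariant. Throughout I may replace $\xi$ by its image under a diffeomorphism of $V$ fixing the meridian $\infty$; such maps act on $H_1(\partial V)$ by the parabolic subgroup fixing $\infty$ and merely shift integer slopes, so I may normalize $s = \frac{p+kq}{q}\in[-1,0)$ (this is the role of $k$) without affecting any of the counts. For the upper bound, let $D$ be a convex meridian disk with Legendrian boundary meeting $\Gamma_{\partial V}$ minimally. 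Since the meridian has slope $\infty$ and the two dividing curves have slope $p/q$, equation~\eqref{eq:tb} gives $tb(\bd D) = -|q|$, so $D$ carries exactly $|q|$ dividing arcs. Cutting $V$ along $D$ and rounding edges yields a tight ball with convex boundary, and by the uniqueness of the tight contact structure on $B^3$ the isotopy class of $\xi$ is determined by the combinatorics of $\Gamma_D$; hence there are only finitely many tight structures.

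Next I would build the basic slice decomposition. Using the bypass and thickening machinery (Theorem~\ref{thm:bypass-torus}) one realizes a nested family of boundary-parallel convex tori whose slopes march along the minimal Farey path from $s$ down to $\lfloor s\rfloor$ and on to the core. Consecutive slopes are Farey neighbors, so each layer is a basic slice, positive or negative by Theorem~\ref{thm:basic-slice}, and the continued-fraction data $[r_0,\dots,r_n]$ records the lengths of the straight runs of this path. A choice of sign for every basic slice determines $\xi$, and the key reduction is Honda's \emph{shuffling} relation: two adjacent basic slices of the same sign inside one continued-fraction block can be interchanged by a contact isotopy, so within each block only the number of positive slices matters. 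Counting sign multisets block by block---$|r_i+1|$ choices in each interior block and $|r_n|$ in the terminal block, the different bookkeeping reflecting the special role of the innermost layer abutting the meridian disk---produces the asserted total $|(r_0+1)\cdots(r_{n-1}+1)r_n|$, which is part~(1). Locating a convex torus of integer slope $\lfloor s\rfloor$ inside $V$ and capping the integer-slope inner solid torus with its unique tight filling then identifies tight structures on $V$ with tight structures on the outer layer $T(\lfloor s\rfloor, s)$, giving part~(2).

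For the remaining parts I would distinguish the structures by the relative Euler class on $D$, which equals $\chi(D_+)-\chi(D_-)$ and is read directly from $\Gamma_D$: nested, all-boundary-parallel arcs give the extremal value, and each non-boundary-parallel arc strictly decreases its magnitude. Thus the extremal value $\pm(|q|-1)$ is attained exactly when every basic slice carries the same sign, equivalently when all dividing arcs on $D$ are boundary-parallel; this is part~(5). To close parts~(3)--(5) I use that a stack of equal-sign basic slices is universally tight (after passing to a cover it embeds in a universally tight $T^2\times\mathbb{R}$ model and so has no overtwisted disk), whereas any stack mixing both signs becomes overtwisted in a finite cover. When $s\in\mathbb{Z}$ we have $q=1$, so $D$ has a single dividing arc, $\xi$ is unique, and it is universally tight by the equal-sign criterion (part~(3)); when $s\notin\mathbb{Z}$ there is at least one genuine sign choice, and the only universally tight structures are the all-positive and all-negative ones, distinct by their opposite extremal Euler classes, hence exactly two (part~(4)).

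The main obstacle is the counting in part~(1). An honest argument needs both halves of the shuffling lemma---that same-sign adjacent basic slices commute (the upper bound) and that sign assignments differing by something other than a shuffle yield non-isotopic structures (the lower bound, supplied by the relative Euler class computation above)---together with the continued-fraction bookkeeping that correctly separates the interior blocks contributing $|r_i+1|$ from the terminal block contributing $|r_n|$. Pinning down the endpoint conventions of this bookkeeping, so that the product lands on $|(r_0+1)\cdots(r_{n-1}+1)r_n|$ rather than an off-by-one variant, is where the real care is required.
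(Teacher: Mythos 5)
The paper offers no proof of this theorem at all---it is quoted as background from Honda \cite{Honda:classification1}---and your sketch correctly reconstructs the argument of that source: finiteness by cutting along a convex meridian disk with $tb(\bd D)=-|q|$ and invoking uniqueness of the tight ball, the exact count via the basic-slice factorization along the minimal Farey path with same-sign shuffling supplying the upper bound and relative Euler classes the lower bound, the correspondence with $T(\lfloor s\rfloor,s)$ by capping the integer-slope inner torus with its unique tight filling, and universal tightness characterized by all-equal signs, equivalently the extremal relative Euler class on $D$. So your proposal is correct and takes essentially the same route as the proof the paper defers to, the only soft spot being the digit-to-block bookkeeping you yourself flag as delicate (note the anomalous factor $\left|r_n\right|$ is attached to the block of the path adjacent to the boundary slope rather than to the innermost layer, though this affects only your explanatory gloss and not the count).
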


To study tight contact structures on lens spaces, it is useful to use different coordinates for a solid torus. First, we fix a homology basis of $T^2 \times [0,1]$ so that the slopes of curves on $T^2 \times \{t\}$ are well-defined. We say a solid torus is a \dfn{solid torus with lower meridian of slope r} if it is formed from $T^2\times [0,1]$ by collapsing the leaves of a linear foliation (closed curves) on $T^2\times \{0\}$ of slope $r$. We denote it by $S_r$. If there is a contact structure on $S_r$ such that the boundary is convex with two dividing curves of slope $s$, then we denote the contact manifold by $S_r(s)$. We also note that any convex torus in $S_r(s)$ parallel to the boundary has a dividing slope clockwise of $r$ and counterclockwise of $s$ in the Farey graph. We can similarly define the \dfn{solid torus with upper meridian of slope $r$} similarly except we collapse leaves of a linear foliation on $T^2\times \{1\}$ of slope $r$ and denote the result by $S^r$. We also denote by $S^r(s)$ the contact structure on $S^r$ with convex boundary having two dividing curves of slope $s$. We note that any convex torus in $S^r(s)$ parallel to the boundary has a dividing slope counterclockwise of $r$ and clockwise of $s$ in the Farey graph.

According to Theorem~\ref{thm:classification-s1d2}, both $S_r(s)$ and $S^r(s)$ admit a unique tight contact structure if and only if there is an edge between $s$ and $r$ in the Farey graph. We assume a solid torus $S^1 \times D^2$ has lower meridian of slope $\infty$ unless otherwise specified.

Recall the standard contact structure $\xi_{std}$ on $S^3$ is a union of standard neighborhoods of the Legendrian Hopf link $L_1 \cup L_2$ with $\tb(L_1) = \tb(L_2) = -1$. This gives a decomposition of $\xi_{std}$ into $S^0(-1)$ and $S_{\infty}(-1)$, where $L_1$ and $L_2$ are the cores of $S^0(-1)$ and $S_{\infty}(-1)$, respectively. Suppose $(p,q)$ is a pair of coprime integers satisfying $p > q > 0$. According to Giroux \cite{Giroux:classification} and Honda \cite{Honda:classification1}, we can obtain any tight contact structure on a lens space $L(p,q)$ by performing contact $-(p/q-1)$--surgery on $L_2$, that is, remove a standard neighborhood $S_{\infty}(-1)$ of $L_2$ and glue $S_{-p/q}(-1)$ to the complement. Thus any tight contact structure on $L(p,q)$ can be decomposed into $S^0(-1) \cup S_{-p/q}(-1)$. See the first drawing of Figure~\ref{fig:unknots}. Notice that $L_1$ still has a standard neighborhood $S^0(-1)$. 

We can also represent this decomposition on the Farey graph. Let $s_0,\ldots,s_n$ be the shortest path from $-p/q$ to $0$ in the Farey graph clockwise of $-p/q$ and counterclockwise of $0$. Notice that $s_0=-p/q$, $s_{n-1}=-1$ and $s_n = 0$. Decorate the edges in the path with $+$ or $-$ except for the first and the last ones. Each decorated edge from $s_i$ to $s_{i+1}$ represents a basic slice $T(s_i,s_{i+1})$ and the decoration on the edge represents the sign of the basic slice. The first and the last edges represent the solid tori $S_{-p/q}(s_1)$ and $S^0(-1)$, respectively. We can consider $S_{-p/q}(-1)$ as a union of $S_{-p/q}(s_1)$ and $T(s_1,-1)$. See Figure~\ref{fig:path}. For later usage, notice that $s_1 = (p'-p)/(q-q')$ where $p'/q'$ is the largest (extended) rational number satisfying $pq' - p'q = -1$. We set $(p',q')=(1,0)$ if $q \equiv 1 \pmod p$.

\begin{figure}[htbp]{\scriptsize
  \vspace{0.1cm}
  \begin{overpic}[tics=20]{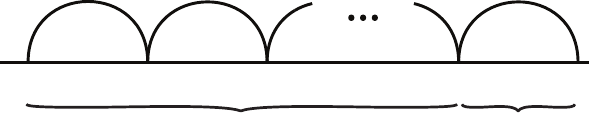}
    \put(6,13){$-\frac pq$}
    \put(66,15){$s_1$}
    \put(125,15){$s_2$}
    \put(215,15){$-1$}
    \put(276,15){$0$}

    \put(92,-15){$S_{-p/q}(-1)$}
    \put(232,-15){$S^0(-1)$}
  \end{overpic}}
  \vspace{0.4cm}
  \caption{A path in the Farey graph representing a tight contact structure on $L(p,q)$.}
  \label{fig:path}
\end{figure}

Let $(p,q)$ be a pair of coprime integers satisfying $p > q > 0$ and 
\[
  -\frac pq = [r_0, \ldots, r_n] 
\]
where $r_i \leq -2$ for $0 \leq i \leq n$.

\begin{theorem}[Giroux \cite{Giroux:classification}, Honda \cite{Honda:classification1}]\label{thm:classification-lens}
  Suppose $(p,q)$ are as above. Then    
  \begin{enumerate}
    \item there are $\left|(r_0+1)\cdots(r_{n-1}+1)(r_n+1)\right|$ tight contact structures on $L(p,q)$ up to isotopy,
    \item if $q \equiv -1 \pmod p$, there exists a unique tight contact structure on $L(p,q)$ and it is universally tight, 
    \item if $q \not\equiv -1 \pmod p$, there are exactly two universally tight contact structures on $L(p,q)$,
    \item Any tight contact structure $\xi$ on $L(p,q)$ can be decomposed into tight contact structures on $S_{-p/q}(-1)$ and $S^0(-1)$. In particular, $\xi$ is universally tight if and only if the contact structure $\xi$ restricted to $S_{-p/q}(-1)$ is universally tight. 
  \end{enumerate}
\end{theorem}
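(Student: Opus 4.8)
The plan is to follow Honda's convex-surface strategy: decompose an arbitrary tight $\xi$ on $L(p,q)$ along a convex Heegaard torus into tight pieces on two solid tori, reduce everything to the solid-torus classification (Theorem~\ref{thm:classification-s1d2}), and then read off the count, the universally tight structures, and the role of $q \equiv -1 \pmod p$ from the continued fraction of $-p/q$.

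First I would prove the decomposition in part~(4). Given a tight $\xi$, I take a Heegaard torus $T$ separating neighborhoods of the two cores $L_1$ and $L_2$, perturb it to be convex, and apply Giroux's criterion to conclude that $\Gamma_T$ has no contractible component, hence consists of $2k$ parallel essential curves of some slope. By thickening the two solid-torus sides to maximal tight pieces and shaving off bypasses (Theorems~\ref{thm:bypass-torus} and~\ref{thm:imba}), I would reduce to a single convex $T$ with two dividing curves of slope $-1$, with the $L_1$-side equal to the standard piece $S(-1,0;u)$; the latter carries a unique tight structure because $-1$ and $0$ span an edge of the Farey graph. Isotopy discretization (Theorem~\ref{thm:discretization}) guarantees that each intermediate slope on the relevant Farey arc is realized by a convex torus, so this normalization is achievable by a contact isotopy. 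This exhibits $\xi$ as a gluing of the unique structure on $S(-1,0;u)$ with a tight structure on $S(-1,-p/q;l)$, which is part~(4).

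For the count in part~(1), I would transport $S(-1,-p/q;l)$ to standard meridian coordinates by the element of $\mathrm{SL}_2(\Z)$ sending $-p/q \mapsto \infty$, apply Theorem~\ref{thm:classification-s1d2}(1), and convert the output to $|(r_0+1)\cdots(r_n+1)|$ by a continued-fraction identity. The version I would actually write uses the Farey path of Figure~\ref{fig:path}: the shortest path $-p/q = s_0, \dots, s_{n-1} = -1, s_n = 0$ splits $\xi$ into the two frozen solid-torus edges $s_0 s_1$ and $s_{n-1} s_n$ together with a chain of sign-decorated basic slices on the interior edges. The naive count $2^{\#\{\text{interior edges}\}}$ is then cut down by Honda's shuffling relations, which let adjacent basic slices inside a common ``straight'' continued-fraction block interchange their signs; counting sign sequences modulo these relations block by block gives exactly $\prod_i |r_i+1|$, the symmetric product arising because in the closed manifold \emph{both} extreme edges are capped by solid tori. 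For the matching lower bound I would stack basic slices with the prescribed signs and distinguish the results using the relative Euler class on a convex meridian disk of $S(-1,-p/q;l)$, exactly as in Theorem~\ref{thm:classification-s1d2}(5).

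Parts~(2) and~(3) then follow from the universal-tightness criterion asserted in part~(4). The forward direction is immediate, since a restriction of a universally tight structure is universally tight; for the converse I would pull the normalized convex decomposition back to the universal cover $S^3$ and verify that gluing the universally tight $S(-1,-p/q;l)$ to the always universally tight $S(-1,0;u)$ leaves no overtwisted disk upstairs. Granting the criterion, the number of universally tight structures on $L(p,q)$ equals that on the solid torus, which by Theorem~\ref{thm:classification-s1d2}(3)--(4) is $1$ when the transported dividing slope is integral and $2$ otherwise; a continued-fraction computation identifies the integral case with ``every $r_i = -2$,'' that is, with $q \equiv -1 \pmod p$, which is also precisely the case $\prod_i |r_i+1| = 1$ in which the single tight structure is forced to be universally tight. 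I expect the main obstacle to be the combinatorial core of part~(1): simultaneously showing that the shuffling relations are complete, so the upper bound is sharp, and that the stacked structures are pairwise non-isotopic, since both directions demand careful control of how the basic-slice signs and the relative Euler class behave under the gluing and under ambient isotopy of the Heegaard torus.
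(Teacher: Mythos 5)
The paper does not prove Theorem~\ref{thm:classification-lens}; it quotes it as background from Giroux and Honda, so your proposal can only be measured against the cited arguments. Your skeleton is the right one and matches Honda's: factor along a convex Heegaard torus (Giroux's criterion ruling out contractible dividing curves), normalize to two dividing curves of slope $-1$ with the standard piece $S(-1,0;u)$ on one side, count sign-decorated Farey paths modulo shuffling for the upper bound, and realize the candidates by stacking basic slices for the lower bound. You also correctly identify where the real work is (completeness of the shuffling relations and pairwise non-isotopy of the glued structures).

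There is, however, a genuine gap in how you derive parts (2)--(4): you call the forward direction of the criterion in (4) ``immediate, since a restriction of a universally tight structure is universally tight.'' That general principle is false. Universal tightness does not pass to compact submanifolds, because the universal cover of the submanifold is a further cover of its preimage upstairs, and tightness is not preserved under passing to covers. Concretely, let $L$ be the Legendrian unknot in $(S^3,\xi_{std})$ with $\tb(L)=-3$ and $\rot(L)=0$ (one positive and one negative stabilization of the $\tb=-1$ unknot). The complement of a standard neighborhood of $L$ is a solid torus carrying a tight structure with non-extremal relative Euler class, which by Theorem~\ref{thm:classification-s1d2}(5) is virtually overtwisted --- yet it sits inside the universally tight $S^3$. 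For the same reason the converse direction cannot be handled by ``verifying that gluing two universally tight pieces leaves no overtwisted disk upstairs'': tightness of a union of universally tight pieces along a convex torus is not automatic (mixed-sign stacks of basic slices, each universally tight, are exactly how virtually overtwisted and overtwisted structures arise). In the actual Giroux--Honda proof the criterion in (4) is an \emph{output} of the classification, not an input: one first proves (1), shows the mixed-sign structures are virtually overtwisted by exhibiting an overtwisted disk in an explicit finite cover, and identifies the two extremal (all-$+$ and all-$-$) structures with the $\Z_p$-quotients $\xi_{std}^{\pm}$ of the round sphere constructed in Section~\ref{sec:lens}, which are universally tight by construction; (2), (3), and the biconditional in (4) then follow, with $q \equiv -1 \pmod p$ corresponding to all $r_i=-2$ and $\prod_i\lvert r_i+1\rvert = 1$ exactly as you computed. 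Your proposal should be reorganized along these lines, since as written parts (2)--(3) rest on the unproved (and, as a general principle, false) restriction and gluing claims.
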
 

There is another way to construct universally tight contact structures on lens spaces $L(p,q)$. Consider $S^3$ as a unit sphere in $\mathbb{C}^2$. Then the standard contact structure $\xi_{std}$ on $S^3$ is the kernel of 
\[
  \alpha = (x_1dy_1 - y_1dx_1 + x_2dy_2 - y_2dx_2)|_{S^3}.
\]
We can consider $L(p,q)$ as the quotient of $S^3 \subset \mathbb{C}^2$ under the $\mathbb{Z}_p$-action generated by
\[
  (z_1,z_2) \mapsto (e^{2\pi i/p}z_1, e^{2\pi qi / p}z_2).
\]
Since $\alpha$ is invariant under this $\mathbb{Z}_p$-action, we obtain an induced contact structure on $L(p,q)$. We call this contact structure the \dfn{standard contact structure $\xi_{std}$} on $L(p,q)$. However, one should notice that the standard contact structure is not unique in general. In fact, we can repeat the same construction on $-\alpha$ and obtain another contact structure on $L(p,q)$. Although $\xi_{std}$ and $-\xi_{std}$ are isotopic in $S^3$, this is not the case for the induced contact structures on lens spaces in general. Thus we denote them by $\xi_{std}^+$ and $\xi_{std}^-$, respectively. According to Theorem~\ref{thm:classification-lens}, $\xi_{std}^+$ and $\xi_{std}^-$ are isotopic if and only if $q \equiv -1 \pmod p$. Thus in this case, we just denote them by $\xi_{std}$. Even in the case of $q \not\equiv -1 \pmod p$, since most of the arguments work in the same way, we will frequently denote them by $\xi_{std}$ and this means that we fix one of two standard contact structures.  

If $q^2 \equiv 1 \pmod p$, there exists an orientation preserving diffeomorphism $\sigma$ on $L(p,q)$, which is defined by
\begin{align*}
  \sigma \colon L(p,q) &\to L(p,q)\\
  (z_1,z_2) &\mapsto (z_2, z_1)
\end{align*}
Also, there exists an orientation preserving diffeomorphism $\tau$ on any lens spaces $L(p,q)$, which is defined by
\begin{align*}
  \tau \colon L(p,q) &\to L(p,q)\\
  (z_1,z_2) &\mapsto (\overline{z}_1, \overline{z}_2)
\end{align*}
We can check $\sigma$ is a coorientation preserving contactomorphism of $\xi_{std}$ as follows:   
\begin{align*}
  \sigma^*(\alpha) &= \sigma^*(x_1dy_1 - y_1dx_1 + x_2dy_2 - y_2dx_2)\\
  &= x_2dy_2 - y_2dx_2 + x_1dy_1 - y_1dx_1\\
  &= \alpha.
\end{align*}
We can also check $\tau$ is a coorientation reversing contactomorphism of $\xi_{std}$ as follows:
\begin{align*}
  \tau^*(\alpha) &= \tau^*(x_1dy_1 - y_1dx_1 + x_2dy_2 - y_2dx_2)\\
  &= -x_1dy_1 + y_1dx_1 - x_2dy_2 + y_2dx_2\\
  &= -\alpha.
\end{align*}
If $q \equiv -1 \pmod p$, since $\xi_{std}^+$ and $\xi_{std}^-$ are isotopic, we can apply the Moser's trick (see \cite[Theorem~2.2.2]{Geiges:book}) and find an isotopy $\psi_t$ such that $(\psi_1)_*(\xi_{std}^\pm) = \xi_{std}^\mp$. Thus $\psi_1 \circ \tau$ is a coorientation preserving contactomorphism of $\xi_{std}$ which is smoothly isotopic to $\tau$. We denote this by $\overline{\tau}$.

%---------------------------------------------------------------------------------------------------
\subsection{The mapping class group and rational unknots in lens spaces} \label{sec:mcg-lens}
%---------------------------------------------------------------------------------------------------
The mapping class group of lens spaces was determined by Bonahon \cite{Bonahon:lens}. We warn the reader that his definition of lens spaces is different from ours. He defined $L(p,q)$ to be $p/q$-surgery on the unknot in $S^3$, while we defined $L(p,q)$ to be $-p/q$-surgery on the unknot in $S^3$ (which is commonly used by contact topologists). Thus some statements below are different from the ones initially stated. 

Recall that in Section~\ref{sec:lens}, we defined two diffeomorphisms, $\sigma$ by $(z_1,z_2) \mapsto (z_2,z_1)$ if $q^2 \equiv 1 \pmod p$, and $\tau$ by $(z_1,z_2)\mapsto(\overline{z}_1,\overline{z}_2)$. Bonahon \cite{Bonahon:lens} determined the mapping class group of lens spaces in terms of $\sigma$ and $\tau$. 

\begin{theorem}[Bonahon \cite{Bonahon:lens}]\label{thm:mcg-lens}
  The mapping class group of $L(p,q)$ is 
  \[
    \pi_0(\Diff_+(L(p,q))) = \begin{cases} 
      \mathbb{Z}_2 \oplus \mathbb{Z}_2 \cong \langle \sigma, \tau \rangle \,& p \neq 2,\; q \not\equiv \pm1 \modp p\, \text{ and }\, q^2 \equiv 1 \modp p,\\
      \mathbb{Z}_2 \cong \langle \sigma \rangle \cong  \langle \tau \rangle \,& p \neq 2\, \text{ and }\, q \equiv -1 \modp p,\\
      \mathbb{Z}_2 \cong \langle \tau \rangle \,& p \neq 2\, \text{ and }\, q \equiv 1 \modp p,\\
      \mathbb{Z}_2 \cong \langle \tau \rangle \,& p \neq 2\, \text{ and }\, q^2 \not\equiv 1 \modp p,\\
      1 & p = 2. \end{cases}
  \]
\end{theorem}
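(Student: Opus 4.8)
The plan is to (i) realize the claimed generators explicitly, (ii) bound the group from above by isotoping an arbitrary diffeomorphism onto the Heegaard torus, and (iii) separate the resulting isotopy classes by their action on first homology. I would start by recording the homological data of the two maps. Both $\sigma$ (defined when $q^2\equiv1\pmod p$) and $\tau$ are orientation-preserving involutions, so each has order dividing $2$ in $\pi_0(\Diff_+)$. Writing $H_1(L(p,q);\mathbb{Z})\cong\mathbb{Z}/p$ with generator the core $c_1$ of the first Heegaard solid torus $V_1$, the conjugation $\tau$ reverses every circle factor and so acts by $-1$, while $\sigma\colon(z_1,z_2)\mapsto(z_2,z_1)$ sends $c_1$ to the core $c_2$ of $V_2$, which represents $q^{\pm1}c_1$; the exponent ambiguity is immaterial since $\sigma$ exists only when $q^2\equiv1$, so $\sigma$ acts by multiplication by $q$. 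Hence $id,\tau,\sigma,\sigma\tau$ act on $\mathbb{Z}/p$ by $1,-1,q,-q$, and these four units are pairwise distinct precisely when $p\neq2$, $q\not\equiv\pm1$, and $q^2\equiv1$.

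For the upper bound, the essential geometric input is the isotopy-uniqueness of the genus-one Heegaard splitting $L(p,q)=V_1\cup_T V_2$ (Bonahon, \cite{Bonahon:lens}). Granting this, any $f\in\Diff_+(L(p,q))$ may be isotoped so that $f(T)=T$, after which $f$ either preserves $\{V_1,V_2\}$ or swaps the two solid tori. The induced automorphism $M\in GL(2,\mathbb{Z})$ of $H_1(T)\cong\mathbb{Z}^2$ must carry the unordered pair of meridian slopes $\{s_1,s_2\}$ to itself, and $M$ extends over a given $V_i$ exactly when it fixes that solid torus's meridian slope. I would then classify the admissible $M$. If $f$ fixes each $V_i$, then $M$ fixes both slopes $s_1\neq s_2$; such an $M$ is simultaneously diagonalizable with eigenvalues $\pm1$, and the requirement that $f$ preserve the orientation of $L(p,q)$ forces $\det M=+1$, leaving only $M=\pm I$, i.e. the classes of $id$ and $\tau$. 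If $f$ swaps the handlebodies, then $M$ exchanges $s_1\leftrightarrow s_2$, which is solvable over $\mathbb{Z}$ precisely when $q^2\equiv1\pmod p$ and then realizes $\sigma$. Thus every class lies in $\{id,\tau,\sigma,\sigma\tau\}=\langle\sigma,\tau\rangle$.

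Combining the upper bound with the homological action reads off each case. When $p\neq2$, $q\not\equiv\pm1$, $q^2\equiv1$, the four elements act distinctly and the group is $\mathbb{Z}_2\oplus\mathbb{Z}_2$. When $q\equiv-1$ (so $q^2\equiv1$), $\sigma$ and $\tau$ both act by $-1$ and coincide, giving $\mathbb{Z}_2\cong\langle\sigma\rangle\cong\langle\tau\rangle$. When $q\equiv1$ and $p\neq2$, the swap $\sigma$ acts trivially and is isotopic to the identity while $\tau$ acts by $-1\neq1$, giving $\mathbb{Z}_2\cong\langle\tau\rangle$. When $q^2\not\equiv1$ the swap is unrealizable, so only $id,\tau$ survive, again $\mathbb{Z}_2$. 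Finally for $p=2$ we have $\mathbb{R}P^3$, where $-1\equiv1$ forces $\tau$ (and $\sigma$) to act trivially and to be isotopic to the identity, giving the trivial group. The coincidences that $H_1$ cannot detect on its own---namely $\sigma\simeq id$ for $q\equiv1$, $\sigma\simeq\tau$ for $q\equiv-1$, and $\tau\simeq id$ for $p=2$---must be extracted from the $GL(2,\mathbb{Z})$ extension analysis (the relevant mapping classes of the solid torus and of the closed manifold), not from homology alone.

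The main obstacle is step (ii): reducing to a Heegaard-torus-preserving map. Lens spaces are not Haken, so this is not a Waldhausen-type rigidity statement and genuinely requires the uniqueness of Heegaard splittings of lens spaces, proved by a thin-position/sweep-out argument. The remaining bookkeeping over $GL(2,\mathbb{Z})$ and $\mathbb{Z}/p$ is elementary; the one point demanding care is verifying that each admissible boundary automorphism actually extends to an \emph{orientation-preserving} diffeomorphism of the closed manifold, and that the only coincidences among $id,\tau,\sigma,\sigma\tau$ are those forced by the stated congruences.
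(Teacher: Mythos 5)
You should first note what the paper itself does with this statement: it is an attributed theorem, and the paper's ``proof'' is the one-line observation (after Theorem~\ref{thm:mcg-lens-kernel}) that the result follows by dividing the relative mapping class group $\pi_0(\Diff_+(L(p,q);T))$ of Theorem~\ref{thm:mcg-lens-torus} by the kernel computed in Theorem~\ref{thm:mcg-lens-kernel}, both quoted from Bonahon. Your proposal instead reconstructs Bonahon's own argument from further back: isotope an arbitrary diffeomorphism onto the Heegaard torus via uniqueness of the genus-one splitting, classify the admissible automorphisms of $H_1(T)\cong\Z^2$, and separate classes by the action on $H_1(L(p,q))\cong\Z/p$. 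Most of your bookkeeping is correct: the units $1,-1,q,-q$ are pairwise distinct exactly when $p\neq2$ and $q\not\equiv\pm1\pmod p$; a matrix in $GL(2,\Z)$ fixing two distinct slopes with determinant $+1$ is $\pm I$; the swap is realizable over $\Z$ precisely when $q^2\equiv1\pmod p$; and all the lower bounds for $p\neq2$ do follow from homology alone.

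The genuine gap is your claim that the coincidences $\sigma\simeq\mathrm{id}$ (for $q\equiv1$), $\sigma\circ\tau\simeq\mathrm{id}$ (for $q\equiv-1$), and $\sigma,\tau\simeq\mathrm{id}$ (for $p=2$) can be ``extracted from the $GL(2,\Z)$ extension analysis'' of the torus and the solid tori. They cannot: by Theorem~\ref{thm:mcg-lens-torus}, $\sigma$ and $\tau$ remain nontrivial (indeed generate $\Z_2\oplus\Z_2$ when $q^2\equiv1\pmod p$) in $\pi_0(\Diff_+(L(p,q);T))$ for \emph{every} such $p,q$, so no analysis confined to torus-preserving diffeomorphisms can produce these isotopies --- any isotopy killing them must move $T$ off itself, and establishing this is exactly the hard content of Bonahon's kernel theorem (Theorem~\ref{thm:mcg-lens-kernel}), not elementary bookkeeping. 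If you want these coincidences without citing that theorem, direct isometry arguments work: for $q\equiv1$ the $\Z_p$-action is central in $U(2)$ and $\sigma\in U(2)$, which is connected, so $\sigma$ is isotopic to the identity equivariantly; for $q\equiv-1$, in quaternionic coordinates the action is right multiplication by $e^{i\theta}$, and $\sigma\circ\tau$ agrees, up to a path in the maximal torus, with left multiplication by $j$, which lies in the connected group of left multiplications commuting with the action; for $p=2$, $\tau$ lies in $SO(4)$ and descends to the connected group $SO(4)/\{\pm1\}$ acting on $\R P^3$. With those repairs your outline becomes a correct, self-contained reconstruction of Bonahon's proof --- a genuinely different and much longer route than the paper's quotient derivation, whose other expensive input (which you correctly flag and also cite to Bonahon) is the isotopy uniqueness of the genus-one Heegaard splitting.
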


It will be useful to consider the mapping class group of lens spaces relative to a Heegaard torus. Let $T$ be a Heegaard torus of $L(p,q)$, which is unique up to smooth isotopy by Bonahon \cite{Bonahon:lens}. Define $\Diff_+(L(p,q);T)$ to be the group of orientation preserving diffeomorphisms fixing $T$ setwise. Bonahon \cite{Bonahon:lens} determined the mapping class group of $L(p,q)$ relative to $T$.

\begin{theorem}[Bonahon \cite{Bonahon:lens}]\label{thm:mcg-lens-torus}
  The mapping class group of $L(p,q)$ relative to $T$ is 
  \[
    \pi_0(\Diff_+(L(p,q);T)) = \begin{cases} 
      \mathbb{Z}_2 \oplus \mathbb{Z}_2 \cong \langle \sigma, \tau \rangle \,& q^2 \equiv 1 \modp p,\\
      \mathbb{Z}_2 \cong \langle \tau \rangle \,& q^2 \not\equiv 1 \modp p.\end{cases}
  \]
\end{theorem}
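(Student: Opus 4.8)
The plan is to analyze $\Diff_+(L(p,q);T)$ through its action on the Heegaard torus $T$, which splits $L(p,q)$ into two solid tori $V_1$ and $V_2$. Restriction gives a homomorphism $\rho\colon \pi_0(\Diff_+(L(p,q);T)) \to \mathrm{MCG}(T) \cong \mathrm{GL}(2,\mathbb{Z})$, well-defined because an isotopy through diffeomorphisms fixing $T$ setwise restricts to an isotopy of $T$. I would first pin down the image. Writing $\mu_i \in H_1(T)$ for the meridian of $V_i$ (a primitive class defined up to sign), the gluing that produces $L(p,q)$ makes the slopes of $\mu_1$ and $\mu_2$ distinct and records $(p,q)$. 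An element $f$ either preserves each $V_i$, in which case $f_*$ sends $\mu_i \mapsto \pm\mu_i$ and, since it preserves the transverse coorientation of $T$, acts with determinant $+1$ on $H_1(T)$; or it swaps $V_1$ and $V_2$, in which case $f_*$ sends $\mu_1 \mapsto \pm\mu_2$ and $\mu_2 \mapsto \pm\mu_1$, reverses the transverse coorientation, and therefore, in order to preserve the orientation of $L(p,q)$, acts with determinant $-1$ on $H_1(T)$. A direct and $p$-uniform computation in $\mathrm{GL}(2,\mathbb{Z})$ shows the non-swapping classes are exactly $\{I,-I\}$, while a swapping class of determinant $-1$ exists if and only if $q^2 \equiv 1 \modp p$. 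Thus the image has order $2$ or $4$, and I would realize its elements geometrically by the identity, by $\tau$ (inducing $-I$), and, when $q^2 \equiv 1 \modp p$, by the swap $\sigma$ and the product $\sigma\tau$.

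The second, harder, step is to show $\rho$ is injective. Given $f$ with $\rho(f) = I$, I would isotope $f$ so that $f|_T = \mathrm{id}$; then $f$ preserves each $V_i$ and restricts to the identity on $\partial V_i$, so $f|_{V_i}$ defines a class in $\pi_0(\Diff(S^1\times D^2\ \text{rel}\ \partial))$. Using that $\Diff(D^2\ \text{rel}\ \partial)$ is contractible (Smale), one reduces $f|_{V_i}$ up to isotopy rel $\partial$ to a power of the meridional twist, the generator of $\pi_0(\Diff(S^1\times D^2\ \text{rel}\ \partial)) \cong \mathbb{Z}$. The crux is then to see that these twists die inside $\Diff_+(L(p,q);T)$: the twist in $V_i$ is supported in a collar of $T$ and is, up to isotopy, the time-one map of a rotation of $V_i$ about its core. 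I would exhibit this rotation as coming from a global circle action on $L(p,q)$ having $T$ as an invariant torus, namely a Seifert fibration of $L(p,q)$ with the core of $V_i$ as a fiber, so that the loop of rotations extends to a loop in $\Diff_+(L(p,q);T)$ and the twist is isotopic to the identity through diffeomorphisms fixing $T$ setwise. This forces $\ker\rho$ to be trivial.

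Combining the two steps identifies $\pi_0(\Diff_+(L(p,q);T))$ with the image of $\rho$, which is $\langle\tau\rangle \cong \mathbb{Z}_2$ when $q^2 \not\equiv 1 \modp p$ and $\langle\sigma,\tau\rangle \cong \mathbb{Z}_2 \oplus \mathbb{Z}_2$ when $q^2 \equiv 1 \modp p$; the two commuting involutions are independent because one swaps the solid tori and the other does not. The main obstacle is the injectivity of $\rho$. The homological bookkeeping and the realization of the generators are routine, but controlling $\pi_0(\Diff(S^1\times D^2\ \text{rel}\ \partial))$ and verifying that the resulting meridional twists are killed by the circle action, without leaving a residual $\mathbb{Z}$ that would contradict the finiteness of the answer, is the delicate point, and is exactly where the special geometry of lens spaces, the Seifert fibration and the uniqueness of the Heegaard torus, must be used.
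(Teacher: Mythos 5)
The paper offers no proof of this statement: it is quoted directly from Bonahon, so your proposal can only be measured against the standard argument. Your architecture --- restrict to $T$ to get $\rho\colon \pi_0(\Diff_+(L(p,q);T)) \to \pi_0(\Diff(T)) \cong \mathrm{GL}(2,\mathbb{Z})$, pin down the image homologically, realize it by $\sigma$ and $\tau$, and prove injectivity by rigidity of solid tori rel boundary --- is the natural direct route to the relative statement (Bonahon's own method is more global, via connectedness of the space of Heegaard tori, which is also what he needs for the absolute mapping class group). Your image computation is correct and uniform in $p$: writing $\mu_2 = q\mu_1 + p\lambda_1$ in a basis $(\mu_1,\lambda_1)$ of $H_1(T)$, the coorientation/determinant bookkeeping forces non-swapping classes to $\pm I$, and integrality of a determinant $-1$ swap matrix $S$ (which satisfies $S^2 = I$ by Cayley--Hamilton, since $\mathrm{tr}\,S = 0$, $\det S = -1$) holds exactly when $q^2 \equiv 1 \pmod p$.

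The concrete flaw sits precisely at the step you flag as the crux: $\pi_0(\Diff(S^1 \times D^2 \text{ rel } \partial))$ is \emph{trivial}, not $\mathbb{Z}$, and your reduction omits the ingredient that proves it. Given $f$ rel $\partial$, the disk $f(D)$ is isotopic to the meridian disk $D$ rel boundary (innermost circles plus irreducibility); Smale's theorem handles $\Diff(D^2 \text{ rel } \partial)$; but you then must cut along $D$ and invoke Cerf's theorem $\pi_0(\Diff(B^3 \text{ rel } \partial)) = 1$ --- without that $B^3$ step you cannot even reduce $f|_{V_i}$ to powers of a twist, and with it there is no twist left at all. Indeed the meridional collar twist is already isotopic to the identity rel $\partial V_i$ inside the solid torus: writing it as $(\theta, re^{i\phi}) \mapsto (\theta, re^{i(\phi + 2\pi\chi(r))})$ with $\chi \equiv 0$ near $r=0$ and $\chi(1)=1$, the family $\chi_s = s\chi + (1-s)$ unwinds it through rel-boundary diffeomorphisms, using only that the full rotation of the $D^2$-factor is a loop in $\Diff(V_i)$ --- no Seifert fibration of $L(p,q)$ is needed. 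So your circle-action step is a correct mechanism (the $T^2$-action on $L(p,q)$ does preserve $T$ and does kill collar twists through $\Diff_+(L(p,q);T)$) but it is solving a problem that does not exist, and the residual $\mathbb{Z}$ you worry about dissolves for purely local reasons: $\ker\rho = 1$ already rel boundary. With that correction your proof is complete and agrees with the quoted theorem.
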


Bonahon \cite{Bonahon:lens} also studied the natural inclusion $i:\Diff_+(L(p,q);T)) \hookrightarrow \Diff_+(L(p,q))$ at the $\pi_0$ level.

\begin{theorem}[Bonahon \cite{Bonahon:lens}]\label{thm:mcg-lens-kernel}
  The induced map from the natural inclusion
  \[
    i_*\colon \pi_0(\Diff_+(L(p,q);T)) \to \pi_0(\Diff_+(L(p,q))) 
  \]
  is surjective and the kernel is
  \[
    \ker i_* = \begin{cases} 
      \mathbb{Z}_2 \oplus \mathbb{Z}_2 \cong \langle \sigma, \tau \rangle \,& p = 2,\\
      \mathbb{Z}_2 \cong \langle \sigma\circ\tau \rangle \,& p \neq 2\, \text{ and }\, q \equiv -1 \modp p,\\
      \mathbb{Z}_2 \cong \langle \sigma \rangle \,& p \neq 2\, \text{ and }\, q \equiv 1 \modp p,\\
      1 \,& q \not\equiv \pm1 \modp p.\end{cases}
  \]
\end{theorem}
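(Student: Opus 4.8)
The plan is to derive this purely from the two descriptions of the mapping class groups already recorded in Theorems~\ref{thm:mcg-lens} and~\ref{thm:mcg-lens-torus}, by tracking how the distinguished generators $\sigma$ and $\tau$ behave under the inclusion. The first observation is geometric: taking the standard Heegaard torus to be $T = \{(z_1,z_2)\in S^3 : |z_1| = |z_2|\}/\mathbb{Z}_p$, both $\sigma\colon(z_1,z_2)\mapsto(z_2,z_1)$ and $\tau\colon(z_1,z_2)\mapsto(\bar z_1,\bar z_2)$ preserve $T$ setwise. Hence each represents a class in $\pi_0(\Diff_+(L(p,q);T))$, and $i_*$ sends that class to the corresponding class in $\pi_0(\Diff_+(L(p,q)))$. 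By Theorem~\ref{thm:mcg-lens-torus} these classes—with $\sigma$ present exactly when $q^2\equiv1\pmod p$—generate the entire relative group.

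For surjectivity I would note that in every case of Theorem~\ref{thm:mcg-lens} the full mapping class group is generated by the classes of $\tau$ and, when $q^2\equiv1\pmod p$, of $\sigma$; since these lie in the image $i_*(\pi_0(\Diff_+(L(p,q);T)))$, the map $i_*$ is onto. The kernel is then computed by a direct comparison of generators, case by case, using that the relative and full groups have known orders so that surjectivity pins down $|\ker i_*|$ via the first isomorphism theorem. When $p = 2$ the full group is trivial, so all of $\langle\sigma,\tau\rangle\cong\mathbb{Z}_2\oplus\mathbb{Z}_2$ is the kernel. When $p\neq2$ and $q\equiv-1\pmod p$, Theorem~\ref{thm:mcg-lens} records $\langle\sigma\rangle\cong\langle\tau\rangle$, i.e. $i_*(\sigma)=i_*(\tau)$; therefore $i_*(\sigma\circ\tau)=1$, and since the relative group is $\mathbb{Z}_2\oplus\mathbb{Z}_2$ while the target is $\mathbb{Z}_2$, the kernel is exactly $\langle\sigma\circ\tau\rangle\cong\mathbb{Z}_2$. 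When $p\neq2$ and $q\equiv1\pmod p$, the full group is $\langle\tau\rangle$ with $\sigma$ absent from the generators, i.e. $i_*(\sigma)=1$, so $\ker i_* = \langle\sigma\rangle\cong\mathbb{Z}_2$. Finally, when $q\not\equiv\pm1\pmod p$ the relative and full groups coincide (both $\mathbb{Z}_2\oplus\mathbb{Z}_2$ if $q^2\equiv1$, both $\mathbb{Z}_2$ if $q^2\not\equiv1$) and $i_*$ is an isomorphism, so the kernel is trivial.

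The only genuinely nontrivial input is the set of relations among $[\sigma]$ and $[\tau]$ inside the full mapping class group—specifically that $\sigma$ becomes smoothly isotopic to $\tau$ when $q\equiv-1\pmod p$, that $\sigma$ becomes isotopic to the identity when $q\equiv1\pmod p$, and that both become trivial when $p=2$. In the present write-up these relations are already packaged into Theorem~\ref{thm:mcg-lens}, so the argument above is essentially bookkeeping. If one instead wanted a self-contained derivation, this is exactly where the work would go: one would have to exhibit the relevant isotopies explicitly, for instance out of the Farey-graph and gluing description of Section~\ref{sec:lens}, and the point is that these are precisely the isotopies that must move the Heegaard torus $T$ and are therefore invisible at the relative level—which is what makes $\ker i_*$ nontrivial in the first three cases.
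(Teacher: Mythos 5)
Your order-counting and case bookkeeping are internally consistent, and the surjectivity argument (the explicit maps $\sigma$ and $\tau$ preserve the standard Heegaard torus $\{|z_1|=|z_2|\}/\mathbb{Z}_p$, which is the unique one up to isotopy) is fine. But the argument has two related problems. First, it is circular within this paper: the statement is quoted from Bonahon with no proof given here, and the paper's logical order is the reverse of yours --- it explicitly notes that Theorem~\ref{thm:mcg-lens} \emph{follows from} Theorems~\ref{thm:mcg-lens-torus} and~\ref{thm:mcg-lens-kernel}. Deriving Theorem~\ref{thm:mcg-lens-kernel} from Theorem~\ref{thm:mcg-lens} inverts the intended deduction, so at best you have a consistency check among Bonahon's results, not a proof; the actual content (constructing or obstructing isotopies that sweep the Heegaard torus) lives in Bonahon's work.

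Second, even granting Theorem~\ref{thm:mcg-lens} as an independent black box, the identification of the kernel's generators does not follow formally from its statement. Surjectivity and the \emph{order} of the kernel do follow from your first-isomorphism-theorem count, but in the case $p\neq2$, $q\equiv1\pmod p$ you need $i_*[\sigma]=1$ rather than $i_*[\sigma]=i_*[\tau]$, and the assertion ``$\pi_0(\Diff_+(L(p,q)))\cong\mathbb{Z}_2\cong\langle\tau\rangle$'' is equally true in both scenarios, since $\tau$ generates either way; it does not assert that $[\sigma]$ is trivial. (Contrast the $q\equiv-1$ case, where the deliberate notation $\langle\sigma\rangle\cong\langle\tau\rangle$ genuinely encodes $[\sigma]=[\tau]$, so there your deduction that $\sigma\circ\tau$ dies is legitimate.) Whether $\sigma$ is smoothly isotopic to the identity when $q\equiv1$ --- equivalently, whether the oriented cores $K_1$ and $K_2$ are isotopic --- is precisely the geometric content of the statement being proved; note the paper obtains exactly this fact (in Lemma~\ref{lem:unknots}) \emph{from} Theorem~\ref{thm:mcg-lens-kernel}, not the other way around. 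You flag this issue honestly in your final paragraph, but then claim the needed relations are ``packaged into Theorem~\ref{thm:mcg-lens}''; for $q\equiv1$ they are not. A genuine proof would have to exhibit the isotopy of $\sigma$ to the identity (for $q\equiv1$) and of $\sigma\circ\tau$ to the identity (for $q\equiv-1$), and show no further collapsing occurs rel $T$, which is exactly the work your write-up defers.
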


Now Theorem~\ref{thm:mcg-lens} easily follows from Theorem~\ref{thm:mcg-lens-torus} and~\ref{thm:mcg-lens-kernel}.

A knot $K$ in a $3$--manifold is a \dfn{rational unknot} if it is rationally null-homologous and its minimal rational Seifert genus is $0$. Baker and Etnyre \cite{BE:rational} showed that rational unknots in lens spaces are cores of the Heegaard torus $T$, which is unique up to smooth isotopy by Bonahon \cite{Bonahon:lens}. Since $T$ bounds two solid tori, we can define two oriented rational unknots $K_1$ and $K_2$ as follows:  
\begin{align*}
  K_1 &= \{(e^{i\theta},0) : 0 \leq \theta \leq \frac{2\pi}{p}\},\\
  K_2 &= \{(0,e^{i\theta}) : 0 \leq \theta \leq \frac{2\pi q}{p}\}.
\end{align*}
Also we define $-K_i$ to be the orientation reversal of $K_i$ for $i=1,2$. From this, it is clear that $\sigma(\pm K_i) = \pm K_{3-i}$ and $\tau(\pm K_i) = \mp K_i$ for $i = 1,2$. By Theorem~\ref{thm:mcg-lens} and~\ref{thm:mcg-lens-kernel}, we can determine when $\pm K_1$ and $\pm K_2$ become smoothly isotopic.

\begin{lemma}\label{lem:unknots}  
  The oriented rational unknots in $L(p,q)$ are given by 
  \[
  \begin{cases}
    K_1 \,& p = 2,  \\
    K_1,-K_1 \,& p \neq 2\, \text{ and }\, q \equiv \pm1 \modp p, \\
    K_1,-K_1,K_2,-K_2  & \text{otherwise}.
  \end{cases}
  \]
  up to smooth isotopy.
\end{lemma}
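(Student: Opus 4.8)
The plan is to identify the smooth isotopy classes among the four oriented cores $K_1,-K_1,K_2,-K_2$ with the orbits of the set $\{\pm K_1, \pm K_2\}$ under the action of $\ker i_*$, where
\[
  i_*\colon \pi_0(\Diff_+(L(p,q);T)) \to \pi_0(\Diff_+(L(p,q)))
\]
is the map of Theorem~\ref{thm:mcg-lens-kernel}, and then to read off the answer from the explicit description of $\ker i_*$ together with the formulas $\sigma(\pm K_i) = \pm K_{3-i}$ and $\tau(\pm K_i) = \mp K_i$. As a first reduction I would invoke Baker--Etnyre \cite{BE:rational} to say that every rational unknot is a core of the Heegaard torus $T$, and Bonahon \cite{Bonahon:lens} to say that $T$ is unique up to smooth isotopy, so it suffices to decide which of the four distinguished cores are isotopic.

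The central step is the equivalence: \emph{two cores $K$ and $K'$ are smoothly isotopic in $L(p,q)$ if and only if there is $g \in \ker i_*$ with $g\cdot K = K'$}, where the action is the literal permutation of $\{\pm K_1,\pm K_2\}$ generated by $\sigma$ and $\tau$. The direction ($\Leftarrow$) is immediate: a representative $\Phi \in \Diff_+(L(p,q);T)$ of $g \in \ker i_*$ lies in $\Diff_0(L(p,q))$, so the isotopy from $\mathrm{id}$ to $\Phi$ carries $K$ to $\Phi(K)=K'$. For ($\Rightarrow$), suppose $\phi\in\Diff_0(L(p,q))$ satisfies $\phi(K)=K'$. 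Writing $V$ for the solid torus with core $K$, the image $\phi(V)$ is a solid torus with core $\phi(K)=K'$, hence a regular neighborhood of $K'$; but the solid torus on the appropriate side of $T$ is also a regular neighborhood of $K'$. By uniqueness of tubular neighborhoods rel the knot, there is an ambient isotopy $\psi_t$ fixing $K'$ with $\psi_0=\mathrm{id}$ and $\psi_1(\phi(T))=T$. Then $\psi_1\circ\phi \in \Diff_+(L(p,q);T)\cap\Diff_0(L(p,q))$ represents an element of $\ker i_*$ carrying $K$ to $K'$.

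With this equivalence established, I would finish by computing the $\ker i_*$-orbits case by case from Theorem~\ref{thm:mcg-lens-kernel}. When $p=2$ we have $\ker i_*=\langle\sigma,\tau\rangle$, which acts transitively on $\{\pm K_1,\pm K_2\}$, leaving a single class $K_1$. When $p\neq 2$ and $q\equiv -1 \modp p$ we have $\ker i_*=\langle \sigma\circ\tau\rangle$, and since $\sigma\circ\tau$ sends $K_1\mapsto -K_2$ and $-K_1 \mapsto K_2$, the orbits are $\{K_1,-K_2\}$ and $\{-K_1,K_2\}$, giving classes $K_1,-K_1$. When $p\neq 2$ and $q\equiv 1 \modp p$ we have $\ker i_*=\langle\sigma\rangle$ with orbits $\{K_1,K_2\}$ and $\{-K_1,-K_2\}$, again giving $K_1,-K_1$. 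Finally when $q\not\equiv\pm1\modp p$ we have $\ker i_*=1$, so all four cores are distinct. Combining the two middle cases under $q\equiv\pm1\modp p$ yields exactly the stated list.

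The main obstacle is the direction ($\Rightarrow$) of the equivalence: upgrading a free ambient isotopy between two cores to a $T$-preserving diffeomorphism that is still isotopic to the identity. This is where the uniqueness of the Heegaard torus \cite{Bonahon:lens} and the uniqueness of regular neighborhoods of a knot rel the knot both enter, and where one must be careful to track that the moving core bounds a solid torus that is genuinely a regular neighborhood of the target core so that the two relevant tori can be matched rel $K'$.
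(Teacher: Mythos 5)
Your proposal is correct in substance, but it takes a genuinely different route from the paper: where you re-derive the key rigidity statement from scratch, the paper simply quotes it. The paper's proof invokes Bonahon's result that any orientation-preserving diffeomorphism carrying $K_1$ to $K_2$ (up to isotopy) is smoothly isotopic to $\sigma$, and any carrying $K_1$ to $-K_1$ is isotopic to $\tau$; the lemma then reduces immediately to deciding which of $\sigma$, $\tau$, $\sigma\circ\tau$ are isotopic to the identity, which is read off from Theorem~\ref{thm:mcg-lens-kernel} exactly as in your case analysis (your orbit computations agree verbatim with the paper's four cases). Your version instead proves the reduction directly: the tubular-neighborhood argument upgrading $\phi\in\Diff_0(L(p,q))$ with $\phi(K)=K'$ to a $T$-preserving representative of a class in $\ker i_*$ is essentially a proof of the statement the paper attributes to Bonahon with ``essentially showed,'' so your write-up is more self-contained, at the cost of length. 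One loose end you should patch: your argument produces $F=\psi_1\circ\phi\in\Diff_+(L(p,q);T)\cap\Diff_0(L(p,q))$ with $F(K)=K'$ \emph{on the nose}, but identifying $[F]\in\ker i_*$ with a word $r$ in $\sigma,\tau$ only gives an isotopy from $F$ to $r$ \emph{through $T$-preserving diffeomorphisms}, so a priori you conclude $K'$ is isotopic to $r(K)$, not $K'=r(K)$ as elements of the literal four-element set --- and ``isotopic to an element of the orbit'' is not yet ``in the orbit,'' which threatens a regress. The fix is short: along the $T$-preserving isotopy $F_t$ from $F$ to $r$, the knots $F_t(K)$ remain cores of a \emph{fixed} side $W$ of $T$, and the class of an oriented core in $H_1(W)\cong\mathbb{Z}$ is an invariant of isotopy through cores of $W$, so $K'$ and $r(K)$ are cores of the same solid torus with the same orientation, hence literally equal. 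With that sentence added, your equivalence (isotopy classes $=$ $\ker i_*$-orbits) is complete, and the rest of your proof matches the paper's concluding computation.
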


\begin{proof}
  Bonahon \cite{Bonahon:lens} essentially showed that if an orientation preserving diffeomorphism $f\colon L(p,q)\to L(p,q)$ sends $K_1$ to $K_2$ (up to isotopy), then it is smoothly isotopic to $\sigma$. He also showed that if $f$ sends $K_1$ to $-K_1$, then it is smoothly isotopic to $\tau$. Due to this fact, we only need to figure out when those diffeomorphisms are smoothly isotopic to the identity, which can be found in Theorem~\ref{thm:mcg-lens-kernel}. 

  If $p=2$, both $\sigma$ and $\tau$ are smoothly isotopic to the identity. Thus all $\pm K_1$ and $\pm K_2$ are smoothly isotopic.

  If $q \equiv -1$, $\sigma \circ \tau$ is smoothly isotopic to the identity. Thus $K_1$ is smoothly isotopic to $-K_2$, and $-K_1$ is smoothly isotopic to $K_2$.
  
  If $q \equiv 1$, $\sigma$ is smoothly isotopic to the identity, but $\tau$ is not. Thus $K_1$ is smoothly isotopic to $K_2$, and $-K_1$ is smoothly isotopic to $-K_2$.
  
  If $q \not\equiv \pm1$, none of $\sigma$ and $\tau$ is smoothly isotopic to the identity. Thus none of $\pm K_1$ and $\pm K_2$ is smoothly isotopic to each other.
\end{proof}

Let $(p,q)$ be a pair of coprime integers satisfying $p > q > 0$. Geiges and Onaran \cite{GO:unknots} depicted surgery presentations for the rational unknots $K_1$ and $K_2$ in $L(p,q)$, see Figure~\ref{fig:unknots} and \ref{fig:unknots-smooth}. Here, $p'/q'$ is the largest rational number satisfying $pq' - p'q = -1$.

\begin{figure}[htbp]
  \vspace{0.2cm}
  \begin{overpic}[tics=20]{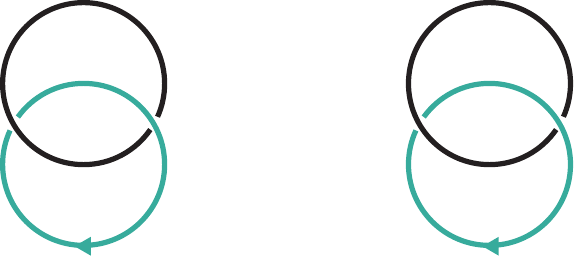}
    \put(85,100){$-\dfrac pq$}
    \put(88,35){$K_1$}

    \put(283,100){$-\dfrac{p}{p'}$}
    \put(286,35){$K_2$}
  \end{overpic}
  \vspace{0.1cm}
  \caption{Surgery presentations for the rational unknots $K_1$ and $K_2$.}
  \label{fig:unknots-smooth}
\end{figure}

We can write the negative continued fraction of $-p/q$ as follows: 
\[
  -\frac pq = [r_0, \dots, r_n]
\] 
where $r_i \leq -2$ for $0 \leq i \leq n$. Then we have 
\[ 
  \begin{pmatrix}
    p & p'\\
    -q & -q'
  \end{pmatrix} 
  =
  \begin{pmatrix}
    -r_0 & 1\\
    -1 & 0
  \end{pmatrix} 
  \begin{pmatrix}
    -r_1 & 1\\
    -1 & 0
  \end{pmatrix} 
  \cdots 
  \begin{pmatrix}
    -r_n & 1\\
    -1 & 0
  \end{pmatrix} 
\]
(see \cite{Rose:book} for example). After taking the inverse of these matrices, we obtain
\begin{equation}\label{eq:fraction}
  -\frac{p}{p'} = [r_n, \ldots, r_0].  
\end{equation}  
Notice that the two surgery presentations in Figure~\ref{fig:unknots-smooth} are not the same. However, by the equality (\ref{eq:fraction}), we can naturally identify one surgery presentation with the other as shown in Figure~\ref{fig:unknots-expansion}. Moreover, if we fix the signs of stabilization, then we can also identify one contact surgery presentation with the other in Figure~\ref{fig:unknots}.

\begin{figure}[htbp]
  \vspace{0.35cm}
  \begin{overpic}[tics=20]{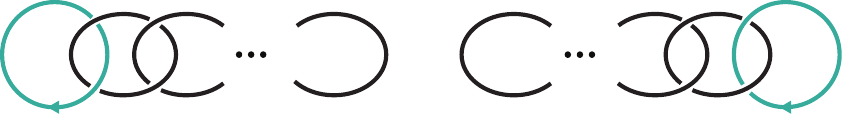}
    \put(-15,40){$K_1$}
    \put(58,53){$r_0$}
    \put(90,53){$r_1$}
    \put(160,53){$r_n$}

    \put(245,53){$r_0$}
    \put(305,53){$r_{n-1}$}
    \put(343,53){$r_n$}
    \put(410,40){$K_2$}
  \end{overpic}
  \vspace{0.05cm}
  \caption{Surgery presentations for the rational unknots $K_1$ and $K_2$.}
  \label{fig:unknots-expansion}
\end{figure}

We end this section by reviewing the mapping class group of $S^1 \times S^2$ and contactomorphisms of the standard contact structure on $S^1 \times S^2$. Consider $S^1 \times S^2 \subset S^1 \times \mathbb{R}^3$, where $S^2$ is a unit sphere in $\mathbb{R}^3$. Then the standard contact structure $\xi_{std}$ on $S^1 \times S^2$ is the kernel of 
\[
  \alpha = (z\,d\theta + x\,dy - y\,dx)|_{S^1 \times S^2}.
\]
There exists an orientation preserving diffeomorphism $\eta$ of $S^1 \times S^2$ which is defined by
\begin{align*}
  \eta\colon S^1 \times S^2 &\to S^1 \times S^2,\\ 
  (\theta,\mathbf{x}) &\mapsto (-\theta, -\mathbf{x}).\\
\end{align*}  
Consider a rotation matrix $r_\theta$ of $\mathbb{R}^3$ about the $z$-axis:
\[
  r_\theta = 
  \begin{pmatrix}
    \cos\theta & -\sin\theta & 0\\
    \sin\theta & \cos\theta & 0\\
    0 & 0 & 1
  \end{pmatrix}.
\]
There is another orientation preserving diffeomorphism $\delta$ of $S^1 \times S^2$, which is the Dehn twist about an essential sphere, defined by
\begin{align*}
  \delta\colon S^1 \times S^2 &\to S^1 \times S^2,\\ 
  (\theta,\mathbf{x}) &\mapsto (\theta, r_\theta(\mathbf{x})).
\end{align*}
We can check $\eta$ is a coorientation preserving contactomorphism immediately:
\begin{align*}
  \eta^*(\alpha) &= \eta^*(z\,d\theta + x\,dy - y\,dx)\\
  &= (-z)\,d(-\theta) - x\,d(-y) + y\,d(-x)\\
  &= \alpha.
\end{align*}
Although $\delta$ is not a contactomorphism of $\xi_{std}$, since there exists a unique tight contact structure on $S^1 \times S^2$ up to isotopy, two contact structures $\xi_{std}$ and $\delta_*(\xi_{std})$ are isotopic. We can apply the Moser's trick again and obtain an isotopy $\psi_t$ such that $(\psi_1)_*(\xi_{std}) = \delta_*(\xi_{std})$. Then clearly $\psi_1^{-1} \circ \delta$ is a contactomorphism of $\xi_{std}$ which is smoothly isotopic to $\delta$. We just relabel $\psi_1^{-1} \circ \delta$ as $\delta$.  

The mapping class group of $S^1 \times S^2$ was determined by Gluck \cite{Gluck:S1xS2}. After that, Hatcher \cite{Hatcher:S1xS2} determined the homotopy type of $\Diff(S^1 \times S^2)$.

\begin{theorem}[Gluck \cite{Gluck:S1xS2}, see also Hatcher \cite{Hatcher:S1xS2}]\label{thm:mcg-s1s2}
  The mapping class group of $S^1 \times S^2$ is 
  \[
    \pi_0(\Diff_+(S^1 \times S^2)) = \mathbb{Z}_2 \oplus \mathbb{Z}_2 \cong \langle \delta, \eta \rangle.
  \]
\end{theorem}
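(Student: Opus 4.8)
The plan is to analyze $\Diff_+(S^1\times S^2)$ through its action on homology together with the $S^2$-fibration over $S^1$. An orientation-preserving $f$ acts on $H_1(S^1\times S^2)\cong\Z$ by $\pm1$; since the orientation of the $3$-manifold is the product of the orientations of the $S^1$ and $S^2$ factors, the sign on $H_1$ must agree with the sign on $H_2(S^1\times S^2)\cong\Z$ (compare the cup product $H^1\otimes H^2\to H^3$, which $f^*$ preserves). This gives a homomorphism
\[
  \rho\colon \pi_0(\Diff_+(S^1\times S^2)) \to \Z_2
\]
recording the action on $H_1$. A direct check shows $\eta$ reverses both factors (the antipodal map on $S^2$ has degree $(-1)^3=-1$), so $\rho(\eta)$ is the nontrivial element and $\rho$ is surjective. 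I would set $K=\ker\rho$ and first pin down $K$.

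To compute $K$, I would use that the non-separating $2$-sphere $\{\mathrm{pt}\}\times S^2$ is unique up to isotopy in $S^1\times S^2$ (Laudenbach). An element of $K$ fixes the isotopy class of this sphere and acts trivially on $H_1$, so after an isotopy it can be arranged to preserve the $S^2$-fibration over $S^1$, covering the identity on the base and restricting on each fiber to an orientation-preserving diffeomorphism of $S^2$. Such a map is recorded by its monodromy loop in $\Diff_+(S^2)$. By Smale's theorem $\Diff_+(S^2)\simeq SO(3)$, so $\pi_0(\Diff_+(S^2))=1$ and $\pi_1(\Diff_+(S^2))=\pi_1(SO(3))=\Z_2$; the first fact lets us trivialize the map on one fiber and the second identifies the remaining freedom. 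This yields $K\cong\pi_1(SO(3))=\Z_2$, with generator exactly the twist $\delta$, since $\theta\mapsto r_\theta$ traces the nontrivial loop in $SO(3)$. In particular $\delta^2\simeq\mathrm{id}$, because the doubled loop is null-homotopic in $SO(3)$.

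It then remains to assemble the extension $1\to K\to \pi_0(\Diff_+(S^1\times S^2))\xrightarrow{\rho}\Z_2\to1$. A direct computation gives $\eta^2=\mathrm{id}$, so $\eta$ splits $\rho$; and conjugating the twist by $\eta$ gives $\eta\,\delta\,\eta^{-1}(\theta,\mathbf{x})=(\theta,r_{-\theta}\mathbf{x})$, which is the twist by the inverse loop and hence equals $\delta^{-1}$ in $\pi_0$. Since $\delta^{-1}\simeq\delta$, the classes of $\delta$ and $\eta$ commute, so the group is abelian and the split extension is the direct sum $\Z_2\oplus\Z_2\cong\langle\delta,\eta\rangle$. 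Finally $\delta$ and $\eta$ are nontrivial and distinct in $\pi_0$ because $\rho(\delta)=0\neq\rho(\eta)$ while $\delta$ is the nonzero class in $K$, so the four classes $\mathrm{id},\delta,\eta,\delta\eta$ are genuinely distinct.

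The main obstacle is the geometric input in the second paragraph: showing that every element of $K$ can be isotoped to a fiber-preserving map. This is where three-dimensional topology enters essentially, requiring the uniqueness of the non-separating sphere up to isotopy together with enough control on isotopies of embedded spheres to straighten the entire fibration. This is the part of the argument that goes beyond the formal homological and loop-space bookkeeping and relies on Hatcher-type results on diffeomorphisms of reducible $3$-manifolds.
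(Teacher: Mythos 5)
First, a point of reference: the paper does not prove this theorem at all --- Theorem~\ref{thm:mcg-s1s2} is quoted as background with citations to Gluck and Hatcher --- so your proposal can only be compared with the literature argument, which is indeed the route you sketch. Most of your outline is sound and standard: the homomorphism $\rho$ recording the action on $H_1$ (with the cup-product argument correctly forcing the signs on $H_1$ and $H_2$ to agree), the reduction of $\ker\rho$ to fiber-preserving diffeomorphisms via Laudenbach's uniqueness of the non-separating sphere together with control of diffeomorphisms of $S^2\times I$ rel boundary (which you rightly flag as the essential three-dimensional input), Smale's theorem $\Diff_+(S^2)\simeq SO(3)$, and the relations $\eta^2=\mathrm{id}$ and $\eta\,\delta\,\eta^{-1}=\delta^{-1}\simeq\delta$ assembling the split abelian extension.

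There is, however, one genuine gap beyond the step you flagged: the claimed isomorphism $K\cong\pi_1(SO(3))$. Your monodromy invariant is well defined only up to fiber-preserving isotopy; an arbitrary smooth isotopy from $\delta$ to the identity need not respect the fibration, and re-straightening it consumes exactly the choices (sphere uniqueness, trivialization over one fiber) you used to define the invariant. As written, your argument yields only a surjection $\mathbb{Z}_2 \to K$, so it leaves open the possibility $K=1$, i.e.\ $\delta\simeq\mathrm{id}$, which would give $\pi_0(\Diff_+(S^1\times S^2))=\mathbb{Z}_2$; the assertion that ``$\delta$ is the nonzero class in $K$'' is precisely the heart of Gluck's theorem and needs its own proof. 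The classical fix is homotopy-theoretic: one shows $\delta$ is not even homotopic to the identity. Since the target is a product, $[S^1\times S^2,\, S^1\times S^2]\cong [S^1\times S^2,\, S^1]\times[S^1\times S^2,\, S^2]$, so $\delta\simeq\mathrm{id}$ if and only if $(\theta,\mathbf{x})\mapsto r_\theta(\mathbf{x})$ is homotopic to the projection, which by adjunction holds if and only if the loop $\theta\mapsto r_\theta$ is null-homotopic in $\mathrm{Map}_1(S^2,S^2)$, the space of degree-one self-maps. The evaluation fibration gives $\pi_1(\mathrm{Map}_1(S^2,S^2))\cong \pi_3(S^2)/\langle[\iota,\iota]\rangle\cong\mathbb{Z}_2$ (the boundary map hits the Whitehead square, which is twice a generator), and the rotation loop represents the nontrivial class (a classical computation, e.g.\ Hansen). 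With this supplement, together with the Laudenbach/Hatcher input you already identified, your proof is complete and coincides with the argument of the cited sources.
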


We define a \dfn{positively oriented core} of $S^1 \times S^2$ to be 
\[
  K := \{(\theta,p) : 0 \leq \theta \leq 2\pi \text{ and } p \in S^2\}, 
\]
and $-K$ to be its orientation reversal and call it a \dfn{negatively oriented core}. Notice that $K$ and $-K$ are not smoothly isotopic to each other. Chen, Ding and Li \cite{CDL:knots1s2} classified Legendrian representatives of the oriented core in the standard contact structure $\xi_{std}$ on $S^1 \times S^2$ up to Legendrian isotopy. Notice that the core is not (rationally) null-homologous, so the (rational) Thurston--Bennequin invariant is not well-defined. However, since $e(\xi_{std})=0$, the contact structure is trivial as a plane field, so the rotation number is well-defined once we fix a trivialization of $\xi_{std}$. The following theorem (and all other statements considering $S^1 \times S^2$) does not depend on the choice of the trivialization. 

\begin{theorem}[Chen--Ding--Li \cite{CDL:knots1s2}]\label{thm:unknot-s1s2}
  The oriented core in the standard contact structure $\xi_{std}$ on $S^1 \times S^2$ is Legendrian simple: any two Legendrian representatives are Legendrian isotopic if they have the same orientation and the same rotation number.
\end{theorem}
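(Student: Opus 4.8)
The plan is to follow the now-standard strategy of classifying a Legendrian knot through its complement, which here is especially favorable because the complement of a core is a solid torus. First I would fix an oriented Legendrian representative $L$ of the core and take a standard neighborhood $N(L)$; its complement $V_L = (S^1\times S^2)\setminus \interior N(L)$ is a solid torus with convex boundary carrying the tight contact structure $\xi_{std}|_{V_L}$. Choosing coordinates on $\bd V_L$ so that the product longitude $S^1\times\{p\}$ is $0$ and the common meridian of the two Heegaard solid tori is $\infty$ (recall that in $S^1\times S^2$ the two meridians coincide), the meridian of $V_L$ is $\infty$ and the dividing set of $\bd V_L$ consists of two curves whose slope is the integer $t$ recording the contact framing of $L$ relative to the product framing. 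The key structural input is then Theorem~\ref{thm:classification-s1d2}(3): since the dividing slope $t$ is an integer, $V_L$ admits a \emph{unique} tight contact structure, and it is universally tight with extremal relative Euler class. Thus the contact geometry of the complement is completely rigid and is recorded by the single integer $t$.

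Next I would compare two oriented Legendrian cores $L_0$ and $L_1$ with the same rotation number. The plan is to run a discretized isotopy argument on the complementary tori. Starting from a smooth isotopy carrying $\bd V_{L_0}$ to $\bd V_{L_1}$, Colin's isotopy discretization (Theorem~\ref{thm:discretization}), together with the parametric convex surface theorem (Theorem~\ref{thm:parametric}), replaces the isotopy by a finite sequence of bypass attachments. Each attachment is either trivial---hence gives only an $I$-invariant collar by Lemma~\ref{lem:trivial}---or changes the dividing slope of the boundary torus according to Theorem~\ref{thm:bypass-torus}; the latter I would interpret, via the imbalance principle (Theorem~\ref{thm:imba}), as a stabilization or destabilization of the underlying Legendrian core. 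Since the only tight structure on each intermediate integer-slope complement is the universally tight one, no nontrivial bypass can be absorbed without altering $t$, so the sequence is bookkept entirely by how the twisting changes. Once $L_0$ and $L_1$ are arranged to have complements with the same dividing slope, the uniqueness from Theorem~\ref{thm:classification-s1d2}(3) makes the two complements contactomorphic; gluing this contactomorphism to the (unique) standard neighborhoods and applying Lemma~\ref{lem:fix} then upgrades it to an ambient contactomorphism fixing a neighborhood of the core, which I would finally promote to a genuine Legendrian isotopy.

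The hard part will be the rotation-number bookkeeping, precisely because the core is not null-homologous and so has no rational Thurston--Bennequin invariant to pair with the rotation number. Concretely, I must show that the rotation number alone pins down the twisting integer $t$ (equivalently, that no independent framing invariant survives), so that ``same rotation number'' really does force the two complements to carry matching boundary data. This amounts to computing the rotation number against the global trivialization coming from $e(\xi_{std})=0$ and matching it with the extremal relative Euler class of the universally tight complement supplied by Theorem~\ref{thm:classification-s1d2}(5); the subtlety is that the ambient mapping class group of $S^1\times S^2$ contains the Dehn twist $\delta$ about the essential sphere, which acts on framings, so I have to be careful to work only with contactomorphisms that are contact isotopic to the identity when converting ``contactomorphic complements'' into ``Legendrian isotopic knots.'' Controlling this interaction---ensuring the reconstructed contactomorphism lies in the identity component and that the stabilizations accumulated along the discretized isotopy genuinely cancel---is where the real work lies; the convex-surface and bypass machinery above is essentially bookkeeping once this rigidity of the rotation number is established.
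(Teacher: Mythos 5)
This statement is background quoted from Chen--Ding--Li \cite{CDL:knots1s2}; the paper itself gives no proof of Theorem~\ref{thm:unknot-s1s2}, so your proposal can only be measured against the paper's analogous arguments for lens spaces (Proposition~\ref{prop:core-univ}, \ref{prop:core-any} and~\ref{prop:unknot}), whose toolkit you correctly identify. Against that standard, there are two genuine gaps.

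First, your pivotal claim --- that ``the rotation number alone pins down the twisting integer $t$,'' so that equal rotation numbers force the two complements to have matching boundary dividing slopes --- is false, and its failure is precisely the content of the theorem. The core meets $\{pt\} \times S^2$ in a single point, so by the lightbulb trick an ambient isotopy of the core to itself can change its framing by any even integer; the twisting relative to the product framing is therefore not a Legendrian isotopy invariant (it is well defined at best mod $2$). On the Legendrian level this is the statement that $S_+S_-L$ is Legendrian isotopic to $L$ for the core --- exactly what ``classified by $\rot$ alone'' encodes: the representatives form a line under stabilization, not a mountain range --- yet $S_+S_-L$ has a standard neighborhood whose boundary dividing slope is $t-2$. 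So two cores with equal rotation number can have complements that are genuinely different solid tori in your bookkeeping, and your plan contains no mechanism for the essential step of cancelling a pair of opposite stabilizations. The paper's lens-space arguments never face this issue, because there $\tb_{\mathbb{Q}}$ is defined and the maximal representatives have the fixed neighborhood $S(-1,0;u)$; in $S^1\times S^2$ that crutch is unavailable, which is why the theorem needs a separate proof in the first place.

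Second, your endgame --- glue a contactomorphism of complements (supplied by uniqueness of the tight structure, Theorem~\ref{thm:classification-s1d2}(3)) to the standard neighborhoods and then ``ensure the reconstructed contactomorphism lies in the identity component'' --- is insufficient and circular. In $S^1 \times S^2$, smoothly isotopic to the identity does not imply contact isotopic to the identity: Ding--Geiges \cite{DG:S1xS2} show $\pi_0(\Cont_0(S^1\times S^2,\xi_{std})) \cong \mathbb{Z}$, generated by a power of $\delta$, which shifts the rotation number of cores by Lemma~\ref{lem:rot-s1s2}. Worse, the paper proves this contact mapping class group statement (Theorem~\ref{thm:cmcg-s1s2}) \emph{using} the Chen--Ding--Li classification, so invoking control of $\pi_0(\Cont)$ to prove the classification runs the logic backwards. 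The viable route, and the one the paper itself uses in the analogous Proposition~\ref{prop:unknot}, is to discretize the smooth isotopy between the two knots' neighborhood tori (Theorem~\ref{thm:discretization}) and construct the Legendrian isotopy bypass-by-bypass, tracking the cores at each stage via a uniqueness result of the type of Proposition~\ref{prop:core-univ} together with Theorem~\ref{thm:cmcg-s1d2}, never quoting a global contactomorphism --- with the additional $S^1\times S^2$-specific work of realizing the stabilization cancellation along the way.
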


Ding and Geiges \cite{DG:S1xS2} showed that $\delta$ increases or decreases the rotation number of Legendrian representatives of the oriented cores in $S^1 \times S^2$. It is also not hard to see that $\eta$ changes the sign of the rotation number. 

\begin{lemma}[Ding--Geiges \cite{DG:S1xS2}]\label{lem:rot-s1s2}
  Let $L$ be a Legendrian representative of the positively oriented core in the standard contact structure $\xi_{std}$ on $S^1 \times S^2$ and $-L$ be its orientation reversal. Then we have 
  \[
    \rot(\delta(\pm L)) = \rot(\pm L) \pm 1.
  \]
  Also, we have
  \[
    \rot(\eta(\pm L)) = -\rot(\pm L).
  \]
\end{lemma}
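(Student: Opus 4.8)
The plan is to compute both transformation rules directly from the definition of the rotation number as a winding number. Since $e(\xi_{std}) = 0$, I would fix a global nonvanishing section $\Phi$ of $\xi_{std}$ and set $\rot(L)$ to be the signed number of turns the positively oriented tangent line of $L$ makes relative to $\Phi$ as $L$ is traversed once. Two general facts drive everything: reversing the parametrization shows $\rot(-L) = -\rot(L)$, and for any coorientation preserving contactomorphism $\phi$ the tangent winding of $\phi(L)$ in $\Phi$ equals the tangent winding of $L$ measured in the pulled back frame $\phi^*\Phi$, because $d\phi$ restricts to an orientation preserving bundle map of $\xi_{std}$. Thus each formula reduces to understanding how $\phi$ moves the core and how $\phi^*\Phi$ compares with $\Phi$ along it.

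For $\delta$ this is transparent. The rotation $r_\theta$ fixes the $z$-axis, so $\delta$ fixes the core pointwise and preserves its orientation; along the core the contact planes are the tangent planes to $S^2$ at the poles, and as $\theta$ runs once around $S^1$ the differential of $\delta$ rotates these planes through a full turn via $r_\theta$. Hence $\delta^*\Phi$ winds exactly once relative to $\Phi$ around the core, with sign $+1$ for the positive $\theta$-direction and $-1$ for the negative one. As $\delta$ does not change the orientation of the knot, this yields $\rot(\delta(\pm L)) = \rot(\pm L)\pm 1$; I would nail down the sign by a one line computation of the winding of $r_\theta$ on a fixed planar frame once the orientations of $S^2$ and of $\xi_{std}$ (via $d\alpha$) are fixed.

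For $\eta$ I would instead show that $\eta(L)$ is Legendrian isotopic to $-L$, after which the formula follows from $\rot(-L) = -\rot(L)$. The map $\eta$ is a coorientation preserving contactomorphism that reverses the orientation of the core (it combines the reflection $\theta\mapsto-\theta$ with the antipodal map of $S^2$), so it carries the positive core to the negative core and, being coorientation preserving while orientation reversing on the knot, it interchanges positive and negative stabilizations. I would first exhibit a symmetric representative $L_0$ with $\rot(L_0) = 0$ for which $\eta(L_0) = -L_0$, and then, using the Legendrian simplicity of the core from Theorem~\ref{thm:unknot-s1s2} to write every representative as an iterated stabilization $S_\pm^k L_0$, propagate the relation by $\eta(S_\pm L) = S_\mp(\eta L)$ together with $S_\pm(-L) = -(S_\mp L)$ to conclude $\eta(L)\simeq -L$ in general. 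The involution check $\eta^2 = \mathrm{id}$, which is compatible with $\rot\mapsto-\rot$, guards against sign errors throughout.

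The main obstacle is the $\eta$ case. In contrast to $\delta$, which fixes the core and admits a direct ``twist once'' count, $\eta$ simultaneously reverses the base circle, applies the antipodal map to the $S^2$ fibre, and swaps the two oriented cores, all while preserving the coorientation and the $d\alpha$-orientation of $\xi_{std}$; the delicate point is to verify that $\eta$ really does swap the two stabilization signs and fixes a $\rot = 0$ representative, so that the sign in $\rot(\eta(\pm L)) = -\rot(\pm L)$ is pinned down by the orientation reversal of the knot rather than by a stray winding contribution of the trivialization. Getting every orientation convention consistent---on $S^2$, on $\xi_{std}$, on the core, and on the chosen trivialization---is where I expect the real work to lie.
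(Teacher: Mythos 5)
The paper does not actually prove this lemma---it is quoted from Ding--Geiges \cite{DG:S1xS2}, whose argument runs through their explicit surgery/front presentation of Legendrian circles in $S^1\times S^2$---so your proposal must stand on its own, and the $\eta$ half does not. Your $\delta$ half is essentially right, with one caveat you gloss over: the map $\delta(\theta,\mathbf{x})=(\theta,r_\theta(\mathbf{x}))$ is \emph{not} a contactomorphism of $\xi_{std}$ (one computes $\delta^*\alpha=\alpha+(1-z^2)\,d\theta$); the paper's $\delta$ is the Moser-corrected map $\psi_1^{-1}\circ\delta$, so your ``winding of $\delta^*\Phi$ against $\Phi$ along the core'' count must be routed through the corrected map, e.g.\ by observing that the relevant difference class in $H^1(S^1\times S^2;\mathbb{Z})$ depends only on the homotopy class of the bundle map and evaluates to $\pm1$ on $\pm[K]$. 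That is repairable.

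The $\eta$ half contains a genuine gap, and in fact your two main tools contradict each other. First, the swap rule $\eta(S_\pm L)\simeq S_\mp(\eta L)$ is false for a coorientation-preserving contactomorphism: since $\eta^*\alpha=\alpha$, the map preserves the ambient orientation ($\phi^*(\alpha\wedge d\alpha)=\lambda^2\,\alpha\wedge d\alpha$) and the $d\alpha$-orientation of the contact planes, and the sign of a stabilization is determined locally by the knot's orientation together with that plane-field orientation; hence $\eta(S_\pm L)\simeq S_\pm(\eta L)$ with image orientations. The orientation reversal of the core is already fully bookkept by the labeling identity $S_\pm(-L)=-S_\mp(L)$, so positing an additional swap double-counts it---and since the swap is exactly equivalent to the formula you are trying to prove, assuming it begs the question. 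Second, your own reduction principle exposes the problem: it gives $\rot(\eta(L))=\rot_{\eta^*\Phi}(L)=\rot(L)+\langle c,[L]\rangle$, where $c\in H^1(S^1\times S^2;\mathbb{Z})$ is the difference class of $\eta^*\Phi$ and $\Phi$; thus $\rot(\eta(L))-\rot(L)$ is one fixed integer for \emph{all} representatives $L$ of the positive core, whereas the asserted formula requires it to equal $-2\rot(L)$, which varies under stabilization. So within the reading you adopt (one fixed global trivialization, image orientations) the $\eta$-formula is not derivable at all; the minus sign is a feature of the convention Ding--Geiges use to assign rotation numbers to the two oriented cores in their diagrams, where $\eta$ is realized by a front reflection interchanging up- and down-cusps. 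Your plan never engages with that convention, and the symmetric representative $L_0$ with $\eta(L_0)=-L_0$ is also left unconstructed (note $\eta$ is fixed-point free, so this needs an argument), so the propagation step collapses.
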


%-------------------------------------------------------------------------------------------------------
\subsection{Invariants of rationally null-homologous Legendrian and transverse knots} \label{sec:tbq}
%-------------------------------------------------------------------------------------------------------
The classical invariants for null-homologous Legendrian and transverse knots were extended to rationally null-homologous knots by Baker and Etnyre \cite{BE:rational}. Let $L$ be a Legendrian representative of a rationally null-homologous knot in a contact $3$--manifold $(M,\xi)$. Suppose the order of $L$ in $H_1(M)$ is $r$ and a rational Seifert surface for $L$ is $\Sigma$. Let $L'$ be a push-off of $L$ along the contact framing. Then the rational Thurston--Bennequin invariant of $L$ is defined by 
\[
  \tb_{\mathbb{Q}}(L) := \frac 1r (L' \bigcdot \Sigma).
\]
Notice that there is an inclusion map $i\colon \Sigma \to M$ which is an embedding in the interior of $\Sigma$, and an $r$-fold cover of $L$ on $\partial \Sigma$. Since the pullback contact structure $i^*(\xi)$ is trivial as a plane field, the pullback of a non-vanishing tangent vector field $i^*(v)$ of $L$ gives a section on $\mathbb{R}^2 \times \partial\Sigma$ after fixing a trivialization. This induces a Gauss map $f \colon S^1 \to S^1$. We define the rational rotation number of $L$ as follows:
\[
  \rot_{\mathbb{Q}}(L) := \frac 1r \deg f. 
\]
Let $T$ be a transverse representative of a rationally null-homologous knot in a contact $3$--manifold $(M,\xi)$, and $L$ be a Legendrian representative such that $T$ is a (positive) transverse push-off of $L$. Then the rational self-linking number of $T$ is defined by
\[
  \self_{\mathbb{Q}}(T) := \tb_{\mathbb{Q}}(L) - \rot_{\mathbb{Q}}(L).
\]
We also denote the maximum rational Thurston--Bennequin invariant among the Legendrian representatives of $K$ by $\tbb_{\mathbb{Q}}(K)$. Similarly, we can define the maximum rational self-linking number $\selfb_{\mathbb{Q}}(K)$. 

Baker and Etnyre \cite{BE:rational} also showed that the stabilization has the same effect on the invariants as in the null-homologous case:
\begin{align*}
  \tb_{\mathbb{Q}}(S_\pm(L)) &= \tb_{\mathbb{Q}}(L) - 1,\\
  \rot_{\mathbb{Q}}(S_\pm(L)) &= \rot_{\mathbb{Q}}(L) \pm 1,\\
  \self_{\mathbb{Q}}(S(T)) &= \self_{\mathbb{Q}}(T) - 2.
\end{align*}

There are two ways to calculate the rational rotation number of a Legendrian rational unknot: using contact surgery presentations by Geiges and Onaran \cite{GO:unknots}, or using the Farey graph. We introduce both methods. 

It is well known that we can calculate the classical invariants from a contact surgery presentation for a given Legendrian knot in an integral homology sphere (see \cite{DK:invariants} for example). Geiges and Onaran \cite{GO:unknots} showed that the same formula works for contact surgery presentations for rationally null-homologous Legendrian knots in a homology sphere.  

Consider a contact surgery presentation for a rationally null-homologous Legendrian knot $L$ in a homology sphere. Convert the contact surgery presentation into a $(\pm1)$--surgery presentation. Let $L_1, \ldots, L_n$ be the surgery components of the $(\pm1)$--surgery presentation, $M$ be the linking matrix of $L_1,\ldots,L_n$ where the $i$-th diagonal entry is the smooth surgery coefficient of $L_i$, 
\[
  \mathbf{rot} := (\rot(L_1), \ldots, \rot(L_n))^\intercal
\]
where $\rot(L_i)$ is the rotation number of $L_i$ in $(S^3,\xi_{std})$,
\[
  \mathbf{lk} := \left(\mathrm{lk}(L,L_1),\ldots,\mathrm{lk}(L,L_n)\right)^\intercal
\]  
where $\mathrm{lk}(L,L_i)$ is the linking number between $L$ and $L_i$ and $\rot_0$ be the rotation number of $L$ in $(S^3,\xi_{std})$. 

\begin{lemma}[Geiges--Onaran \cite{GO:unknots}]\label{lem:rotq-surgery}
  With the notations defined above, we have
  \[
    \rot_{\mathbb{Q}}(L) = \rot_0 - \mathbf{rot}^\intercal \cdot M^{-1} \cdot \mathbf{lk}.
  \]
\end{lemma}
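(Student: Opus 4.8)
The plan is to reduce the statement to the relative Euler class computation that underlies the classical integral surgery formula for rotation numbers, and then to run the same linear algebra with a rational Seifert surface in place of an honest one. Let $Y$ denote the surgered manifold, let $r$ be the order of $[L]$ in $H_1(Y)$, and let $\Sigma$ be a rational Seifert surface with $\partial \Sigma$ an $r$--fold cover of $L$. By the definition of $\rot_{\mathbb{Q}}$ in Section~\ref{sec:tbq}, the rational rotation number is the relative Euler number
\[
  \rot_{\mathbb{Q}}(L) = \frac 1r\,\langle e(\xi, TL), [\Sigma]\rangle,
\]
where $\xi$ is framed along $\partial\Sigma$ by the tangent field $TL$. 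Since $\xi_{std}$ is trivial as a plane field on $S^3$, I would fix once and for all a nonvanishing section $\mathbf{v}$ of $\xi_{std}$ and restrict it to the surgery complement $M_0 := S^3 \setminus \bigcup_i N(L_i)$, which sits inside both $S^3$ and $Y$. Measured against $\mathbf{v}$, the rotation numbers of $L$ and of each $L_i$ in $(S^3,\xi_{std})$ are $\rot_0$ and $\rot(L_i)$.

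First I would put $\Sigma$ in standard position with respect to the surgery, so that it meets each surgery solid torus $V_i$ transversely in $a_i$ meridian disks $D_i$, where $a_i$ is the algebraic intersection of $[\Sigma]$ with the core of $V_i$. This writes $\Sigma = \Sigma_0 \cup \bigsqcup_i a_i D_i$ with $\Sigma_0 \subset M_0$ and $\partial \Sigma_0 = rL \sqcup \bigsqcup_i a_i\gamma_i$, where $\gamma_i = \partial D_i \subset \partial N(L_i)$ is the surgery slope. The requirement that $\partial\Sigma_0$ be null-homologous in $M_0$, whose $H_1$ is freely generated by the meridians $\mu_1,\dots,\mu_n$, reads
\[
  r\,\mathbf{lk} + M\mathbf{a} = 0,
\]
once one writes $[L] = \sum_i \mathrm{lk}(L,L_i)\,\mu_i$ and $[\gamma_i] = \sum_j M_{ij}\,\mu_j$ (the latter encoding the smooth framing on the diagonal of $M$ and the pairwise linking numbers off the diagonal). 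Since $M$ is invertible, $Y$ is a rational homology sphere and $M^{-1}\mathbf{lk}$ makes sense; the relation determines $\mathbf{a} = -r\,M^{-1}\mathbf{lk}$.

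Finally I would compute the relative Euler number additively over $\Sigma = \Sigma_0 \cup \bigsqcup_i a_i D_i$, taking $\mathbf{v}$ as the reference section over $\Sigma_0 \subset M_0$. As $\mathbf{v}$ is nonvanishing over $\Sigma_0$, the $\Sigma_0$--contribution is $r\rot_0$ (the winding of $TL$ against $\mathbf{v}$ over the $r$--fold cover $rL$, with the $\gamma_i$--boundary measured against $\mathbf{v}$ itself and hence contributing nothing), giving
\[
  \langle e(\xi, TL), [\Sigma]\rangle = r\rot_0 + \sum_i a_i\,\langle e(\xi, \mathbf{v}), [D_i]\rangle.
\]
The crux is to identify the per-disk defect $\langle e(\xi,\mathbf{v}),[D_i]\rangle$ — the obstruction to extending $\mathbf{v}$ across a meridian disk of $V_i$ — with $\rot(L_i)$, and I expect this to be the main obstacle: it requires the explicit contact model of a $(\pm1)$--surgery neighborhood together with careful bookkeeping of orientations and of the passage between the contact framing and the smooth framings recorded on the diagonal of $M$. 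Alternatively, one can read this contribution off from Gompf's computation of $c_1(W,J)$ on the trace $W$ of the Weinstein $2$--handle attachments, where $\langle c_1(W,J),[\Sigma_i]\rangle = \rot(L_i)$. Granting this, substitution of $\mathbf{a} = -r\,M^{-1}\mathbf{lk}$ yields
\[
  \rot_{\mathbb{Q}}(L) = \frac 1r\Bigl(r\rot_0 + \mathbf{rot}^\intercal\mathbf{a}\Bigr) = \rot_0 - \mathbf{rot}^\intercal M^{-1}\mathbf{lk},
\]
which is the claimed formula and is exactly the rational analogue of the classical integral surgery formula for rotation numbers.
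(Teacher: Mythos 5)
The paper itself contains no proof of this lemma: it is quoted from Geiges--Onaran \cite{GO:unknots}, with only the surrounding notation set up, so your proposal has to be measured against the cited source rather than against anything in this text. In substance your argument is the same as theirs, just packaged three-dimensionally instead of four-dimensionally. Both proofs hinge on the identical linear algebra: the existence of the rational Seifert class forces $\mathbf{a} = -r\,M^{-1}\mathbf{lk}$ (you extract this from $H_1$ of the link exterior, where indeed $[\gamma_i] = \sum_j M_{ij}\mu_j$ and the relation $r\,\mathbf{lk} + M^{\intercal}\mathbf{a} = 0$ uses the symmetry of $M$; Geiges--Onaran extract it from $H_2$ of the surgery trace $X$), and both identify the per-handle correction with $\rot(L_i)$. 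Where they evaluate $c_1(X,J)$ on the rational capped class $r[\widehat{\Sigma}_L] + \sum_i a_i[\widehat{\Sigma}_i]$ using $\langle c_1(X,J),[\widehat{\Sigma}_i]\rangle = \rot(L_i)$, you compute the relative Euler class over $\Sigma_0 \cup \bigsqcup_i a_i D_i$ against the global trivialization $\mathbf{v}$; the two computations correspond under capping off in $X$, so nothing essential is different.

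Two caveats on the step you correctly flagged as the crux. First, Gompf's computation of $c_1$ of the trace applies as stated only to Weinstein handles, i.e.\ to contact $(-1)$-surgeries; a general contact surgery presentation, once converted to a $(\pm1)$-presentation, contains $(+1)$-surgeries, and for those you need the extension of the almost complex structure over such handles established by Ding--Geiges--Stipsicz, where the evaluation $\langle c_1(X,J),[\widehat{\Sigma}_i]\rangle = \rot(L_i)$ is proved for both signs. Quoting Gompf alone leaves the $(+1)$-handles uncovered. Second, your orientation bookkeeping (whether $\partial\Sigma_0 = rL \sqcup \bigsqcup_i a_i\gamma_i$ or $rL - \sum_i a_i\gamma_i$, and the orientation of the meridian disks $D_i$, which must first be arranged to be coherently oriented by cancelling oppositely signed intersection disks inside each $V_i$) must be fixed consistently with the sign of the per-disk defect; as you acknowledge, these choices only permute signs that cancel in the final formula. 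With the Ding--Geiges--Stipsicz input supplementing the Gompf citation, your proof is complete and agrees with the argument in \cite{GO:unknots}.
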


Notice that if we change the orientation of $L$, then $\rot_0$ changes the sign and every component in $\mathbf{lk}$ also changes the sign while $\mathbf{rot}$ and $M$ do not change. Thus we have $\rot_{\mathbb{Q}}(-L) = -\rot_{\mathbb{Q}}(L)$.

Now we introduce the second method. Recall from Section~\ref{sec:lens} that a decorated path $P$ for a tight contact structure on lens space $L(p,q)$ is the shortest path in the Farey graph from $-q/p$ to $0$, where all edges are decorated with $+$ or $-$ except for the first and the last ones. Let $-q/p = s_0, s_1, \ldots, s_n=-1$ be the vertices in $P$. Also, recall that $a/b \ominus c/d = (a-c)/(b-d)$. If $q \not\equiv -1 \pmod p$, we define  
\[
  r_1 = \sum_{i=1}^{n-1} \epsilon_i \left((s_{i} \ominus s_{i+1}) \bigcdot \frac {-p}q\right)
\]
and
\[
  r_2 = \sum_{i=1}^{n-1} \epsilon_i \left((s_{i+1} \ominus s_{i}) \bigcdot \frac 01\right)
\]
where $\epsilon_i$ is the sign of the edge from $s_i$ to $s_{i+1}$. Here, we assume the numerator of $s_i$ is negative and the denominator of $s_i$ is positive. If $q \equiv -1 \pmod p$, then we define both $r_1$ and $r_2$ to be $0$.

\begin{lemma}\label{lem:rotq-Farey}
  The Legendrian knots $L_1$ and $L_2$ in Figure~\ref{fig:unknots} have the rotation numbers 
  \[
    \rot_{\mathbb{Q}}(L_1) = \frac {r_1}p \quad \text{and} \quad \rot_{\mathbb{Q}}(L_2) = \frac {r_2}p.
  \]
\end{lemma}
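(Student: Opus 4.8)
The plan is to realize $\rot_{\mathbb{Q}}(L_1)$ as a relative Euler number and to use that this number is additive over the basic-slice decomposition of $S(-1,-p/q;l)$ recorded by the decorated path. Recall from Section~\ref{sec:tbq} that $p\cdot\rot_{\mathbb{Q}}(L_1)=\deg f$ is the obstruction to extending, over a rational Seifert surface for $L_1$, the section of $\xi$ coming from a tangent field to $L_1$; concretely, if $D$ is a convex rational Seifert surface with Legendrian boundary, then this obstruction is $\chi(D_+)-\chi(D_-)$. I would take $D$ to be the meridian disk of the complementary solid torus $V'=S(-1,-p/q;l)$, whose boundary is the $-p/q$ curve on the Heegaard torus and represents $p[L_1]$, so that $\rot_{\mathbb{Q}}(L_1)=\tfrac1p(\chi(D_+)-\chi(D_-))$.

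First I would make $D$ convex and transverse to the sweep-out $V'=S(s_1,-p/q;l)\cup T(s_1,s_2)\cup\cdots\cup T(s_{n-1},-1)$, so that $D$ meets each interior convex torus in a $-p/q$ curve. This cuts $D$ into the meridian disk $D_0$ of the innermost piece $S(s_1,-p/q;l)$ together with one annulus $A_i=D\cap T(s_i,s_{i+1})$ of slope $-p/q$ in each basic slice, and since Euler characteristic is additive across the cutting circles, $\chi(D_+)-\chi(D_-)=\bigl(\chi((D_0)_+)-\chi((D_0)_-)\bigr)+\sum_i\bigl(\chi((A_i)_+)-\chi((A_i)_-)\bigr)$. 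Because $s_1$ and $-p/q$ are Farey neighbors, $\partial D_0$ meets the dividing set of $\partial S(s_1,-p/q;l)$ in exactly two points, so $D_0$ carries a single boundary-parallel dividing arc and has balanced regions; hence $D_0$ contributes $0$, which is exactly why the sum in the lemma runs only over $i=1,\dots,n-1$.

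The heart of the argument is the contribution of one signed basic slice. I would show that the relative Euler class of $T(s_i,s_{i+1})$ with sign $\epsilon_i$ is Poincar\'e dual to $\epsilon_i\,[\,s_i\ominus s_{i+1}\,]\in H_1(T^2)$, the Farey difference being the primitive class that completes the Farey triangle on the edge $(s_i,s_{i+1})$; evaluating on the slope-$(-p/q)$ annulus $A_i$ then yields the oriented intersection number $\chi((A_i)_+)-\chi((A_i)_-)=\epsilon_i\bigl((s_i\ominus s_{i+1})\bigcdot\tfrac{-p}{q}\bigr)$. In practice this is the computation of $\chi_+-\chi_-$ for the convex annulus $A_i$ inside a standard model of a basic slice, counting how many times $A_i$ crosses the Legendrian divide carrying the sign $\epsilon_i$; that count is the geometric intersection of the slopes $s_i\ominus s_{i+1}$ and $-p/q$. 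Summing over $i$ and dividing by $p$ gives $\rot_{\mathbb{Q}}(L_1)=r_1/p$. When $q\equiv-1\pmod p$ the path from $-p/q$ to $-1$ is a single edge, so there are no signed basic slices and the empty sum reproduces the convention $r_1=r_2=0$. For $L_2$, the core of $V'$, I would run the identical computation with the meridian disk of $S(-1,0;u)$ (boundary slope $0$) in place of $D$; the reversal of the continued fraction in \eqref{eq:fraction} reverses the path and produces $(s_{i+1}\ominus s_i)\bigcdot\tfrac01$, giving $r_2/p$.

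I expect the main obstacle to be pinning down the signs and the precise Farey arithmetic in the single-basic-slice step: verifying that the Poincar\'e dual of the relative Euler class is exactly $\epsilon_i[s_i\ominus s_{i+1}]$ in our orientation conventions (slopes measured as $\frac{\mathrm{meridian}}{\mathrm{longitude}}$), and checking that the reference sections agree across the intermediate tori so that the relative Euler numbers add with no interface correction. As an independent check I would also match the result against Lemma~\ref{lem:rotq-surgery}: converting the contact surgery diagram of Figure~\ref{fig:unknots} to a $(\pm1)$-surgery chain makes $M$ the tridiagonal linking matrix of the continued fraction, $\mathbf{lk}$ a standard basis vector, and the entries of $\mathbf{rot}$ the stabilization signs encoding the $\epsilon_i$, after which $-\mathbf{rot}^\intercal M^{-1}\mathbf{lk}$ reduces to $r_1/p$ using the correspondence between the convergents of the continued fraction and the vertices of the Farey path.
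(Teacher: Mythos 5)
Your proposal is correct and takes essentially the same route as the paper's proof: both identify $p\,\rot_{\mathbb{Q}}(L_i)$ with the relative Euler class of $\xi$ evaluated on the meridian disk of the complementary solid torus (the paper cites Baker--Etnyre for exactly this), decompose that complement into $S(s_1,-p/q;l)$ together with the signed basic slices of the Farey path, use Honda's computation $e(\xi,t)[A_i]=\epsilon_i\bigl((s_i\ominus s_{i+1})\bigcdot\tfrac{-p}{q}\bigr)$ on the slope-$(-p/q)$ annuli, and conclude by additivity plus the vanishing contribution of the innermost solid torus. The only difference is cosmetic: where you propose re-deriving the rotation-number formula and the basic-slice Euler class via $\chi(\cdot_+)-\chi(\cdot_-)$ counts (including the interface/sign checks you rightly flag), the paper simply cites Baker--Etnyre and \cite[Section~4.2]{Honda:classification1} for those facts.
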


\begin{proof}
  Recall that $L_1$ has the order $p$ in $H_1(L(p,q))$ and its standard neighborhood is $S^0(-1)$. Let $T = \partial S^0(-1)$ and $C = S_{-p/q}(-1)$ that is the complement $S^0(-1)$. Baker and Etnyre \cite{BE:rational} showed that the rational rotation number of $L$ is equal to 
  \[
    \rot_{\mathbb{Q}}(L_1)=\frac 1p e(\xi|_C,s)[D]
  \]
  where $s$ is a non-vanishing section of $\xi|_T$ and $D$ is a meridian disk of $C$. Decompose $C$ into 
  \[
    S_{-p/q}(s_1) \cup T(s_1,s_2) \cup \ldots \cup T(s_{n-1},s_n),
  \]
  where $T(s_i,s_{i+1})$ is a basic slice with slopes $s_i$ and $s_{i+1}$. According to \cite[Section~4.2]{Honda:classification1}, we can calculate the relative Euler class of a basic slice $T(s_i,s_{i+1})$ evaluated on a properly embedded annulus with $-p/q$ slope boundary as follows:
  \[
    e(\xi,t)[A] = \epsilon_i \left((s_{i} \ominus s_{i+1}) \bigcdot \frac {-p}q\right)
  \]
  where $t$ is a non-vanishing section of $\xi$ restricted to $\partial T(s_i,s_{i+1})$. Also, the relative Euler class of $S_{-p/q}(s_1)$ evaluates to $0$ on a meridian disk by Theorem~\ref{thm:classification-s1d2}. Since the relative Euler class is additive under union, we obtain the formula in the statement by taking a summation. The same argument works for $L_2$.
\end{proof}

Using these invariants, Baker and Etnyre \cite{BE:rational} coarsely classified Legendrian rational unknots in any tight contact structure on lens spaces. We say a knot type $K$ is \dfn{coarsely Legendrian simple} if for any two Legendrian representatives of $K$, there is a coorientation preserving contactomorphism, which is smoothly isotopic to the identity, sending one representative to the other if and only if their (rational) Thurston--Bennequin invariants and rotation numbers coincide.   

\begin{theorem}[Baker--Etnyre \cite{BE:rational}]\label{thm:unknot-coarse}
  Suppose $p > q > 0$ and $\xi$ is a tight contact structure on $L(p,q)$. Rational unknots in $\xi$ are coarsely Legendrian simple: there are Legendrian representatives 
  \[
  \begin{cases}
    L_1 \,& p = 2,  \\
    L_1,-L_1 \,& p \neq 2\, \text{ and }\, q \equiv \pm1 \modp p, \\
    L_1,-L_1,L_2,-L_2  & \text{otherwise}.
  \end{cases}
  \]
  with 
  \[
    \tb_{\mathbb{Q}}(\pm L_1)= -\frac{p-q}{p} \;\; \text{and}\;\; \tb_{\mathbb{Q}}(\pm L_2) = -\frac{p-p'}{p},\\
  \]
  where $p'/q'$ is the largest rational number satisfying $pq' - p'q = -1$. Also the rational rotation numbers are determined by the formula in Lemma~\ref{lem:rotq-surgery} or~\ref{lem:rotq-Farey}. For any Legendrian representative $L$ of rational unknots in $\xi$, there is a contactomorphism $f$ of $\xi$ which is smoothly isotopic to the identity such that $f(L)$ is one of the Legendrian representatives above, or their stabilization.
\end{theorem}

%%%%%%%%%%%%%%%%%%%%%%%%%%%%%%%%%%%%%%%%%%%%%%%%%%%%%%%%%%%%%%%%%%%%%%%%%%%%%%%%%%%%%%%%%%%%%%%%%%%
\section{Legendrian and transverse rational unknots in lens spaces} \label{sec:unknot-lens}
%%%%%%%%%%%%%%%%%%%%%%%%%%%%%%%%%%%%%%%%%%%%%%%%%%%%%%%%%%%%%%%%%%%%%%%%%%%%%%%%%%%%%%%%%%%%%%%%%%%

In this section, we classify Legendrian and transverse rational unknots in any tight contact structure on lens spaces and prove the theorems in Section~\ref{sec:unknot-intro}. To do so, we first determine the contact mapping class group of universally tight contact structures on a solid torus with two dividing curves. Then using this, we classify Legendrian representatives of the core in a tight contact structure on a solid torus with two dividing curves.

Before we start, we first extend the definitions. When $(M,\xi)$ is a contact manifold with convex boundary, we define 
\[
  \Cont(M,\xi) = \text{the group of contactomorphisms of $(M,\xi)$ that are the identity on $\partial M$}.
\]
Also we define the contact mapping class group of $(M,\xi)$ to be
\[
  \pi_0(\Cont(M,\xi)) = \Cont(M,\xi) / \sim
\]
where $f \sim g$ if $f$ is contact isotopic to $g$ relative to the boundary.

We start with determining the contact mapping class group of universally tight contact structures on $S^1 \times D^2$ with two dividing curves. If the dividing curves are longitudinal, it was already determined by Giroux~\cite{Giroux:cmcg} and Vogel~\cite{Vogel:pi1}. 

\begin{theorem}\label{thm:cmcg-s1d2}
  Let $\xi$ be a universally tight contact structure on a solid torus $V = S^1 \times D^2$ such that $\partial V$ is convex and the dividing set $\Gamma$ on $\partial V$ consists of two closed curves. Then we have 
  \[
    \pi_0(\Cont(V, \xi)) = 1.
  \]
\end{theorem}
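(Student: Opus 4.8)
The plan is to show that any contactomorphism $f$ of $(V,\xi)$ fixing $\partial V$ is contact isotopic, relative to the boundary, to the identity. The key tool is the isotopy discretization of Colin (Theorem~\ref{thm:discretization}) together with the triviality of trivial bypasses (Lemma~\ref{lem:trivial}). Since $\xi$ is universally tight with two dividing curves of some slope $s$ on $\partial V$, I would first fix a convex meridian disk $D$ for $V$ whose Legendrian boundary intersects $\Gamma$ minimally. By Theorem~\ref{thm:classification-s1d2}(5), $\xi$ being universally tight forces all dividing curves on $D$ to be boundary-parallel, and the relative Euler class is extremal. Because $f$ fixes $\partial V$ pointwise, in particular $f(\bd D) = \bd D$, so $f(D)$ is another convex meridian disk with the same Legendrian boundary.

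The central step is to isotope $f(D)$ back to $D$ relative to $\bd D$ through a contact isotopy, and then to conclude that $f$ itself is isotopic to the identity. First I would apply Theorem~\ref{thm:discretization} to the smooth isotopy from $f(D)$ to $D$ (both convex, sharing the Legendrian boundary $\bd D$), obtaining a finite sequence of convex disks each related to the next by a single bypass attachment. The crucial observation is that in a universally tight contact structure realizing the \emph{unique} extremal configuration of dividing curves on $D$, every such bypass must be trivial: an effective bypass, or a non-effective bypass of type (2) or (3) in Section~\ref{sec:bypass}, would change the dividing set of $D$ and either violate the extremal relative Euler class or create a contractible dividing curve, contradicting Giroux's criterion for tightness. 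Hence each step is a trivial bypass, and by Lemma~\ref{lem:trivial} each bypass attachment only produces an $I$-invariant neighborhood, so the discretized isotopy can be upgraded to a genuine contact isotopy relative to $\bd D$ carrying $f(D)$ to $D$.

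Having arranged (after composing $f$ with this contact isotopy, which does not change the contact isotopy class) that $f$ fixes $D$ setwise, I would next arrange that $f$ fixes a neighborhood of $D$ pointwise. Cutting $V$ along $D$ yields a contact $3$--ball $B = V \setminus N(D)$ with convex boundary, on which $f$ restricts to a contactomorphism. Applying Lemma~\ref{lem:fix}(1) to the convex surface $D$, I may assume $f$ is the identity on an $I$-invariant neighborhood of $D$; after cutting, $f|_B$ is a contactomorphism of the tight contact ball fixing its boundary. The final step invokes Eliashberg's uniqueness of tight contact structures on $B^3$ relative to the boundary, which gives that $\pi_0$ of the contactomorphism group of a tight $3$--ball fixing the boundary is trivial. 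Reassembling, $f$ is contact isotopic to the identity relative to $\partial V$, proving $\pi_0(\Cont(V,\xi)) = 1$.

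The hard part will be the central step: verifying rigorously that \emph{every} bypass arising in the discretization of the isotopy between the two convex meridian disks is forced to be trivial. This requires combining the extremality of the relative Euler class from Theorem~\ref{thm:classification-s1d2}(5) with the classification of non-effective bypasses and Giroux's tightness criterion to rule out any change in the dividing set of $D$; keeping careful track of the boundary condition (that $\bd D$ is fixed throughout) is essential, since only relative-to-boundary isotopies are allowed.
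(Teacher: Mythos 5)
Your proposal is correct and follows essentially the same route as the paper: the extremal (well-groomed) dividing set on the convex meridian disk forces every bypass arising from Colin's discretization to be trivial, after which Lemma~\ref{lem:trivial} upgrades the discretized isotopy to a contact isotopy, and Lemma~\ref{lem:fix} together with Eliashberg's uniqueness and $\pi_0(\Cont(B^3,\xi_{std}))=1$ finishes the argument on the complementary ball. The one step you take for granted --- that $f(D)$ is smoothly isotopic to $D$ relative to $\partial D$, which is the hypothesis needed to invoke Theorem~\ref{thm:discretization} --- is established in the paper by a standard innermost-circle argument using irreducibility of $V$, and should be spelled out for completeness.
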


\begin{proof}
  Let $D$ be a meridian disk of $V$. After isotopy, we can assume that $D$ is convex and $\partial D$ is Legendrian intersecting $\Gamma$ minimally. According to Theorem~\ref{thm:classification-s1d2}, the relative Euler class of $\xi$ is extremal, which implies that every dividing curve on $D$ is boundary parallel (it is called a \dfn{well-groomed} dividing set). See Figure~\ref{fig:univ} for example. Consider a bypass whose attaching arc lies on $D$. Notice that this bypass cannot be effective. Also, the attaching arc is one of the two configurations in Figure~\ref{fig:non-effective}, and the bypass is trivial or yields a contractible dividing curve. Since $\xi$ is tight, the bypass must be trivial. 
 
  \begin{figure}[htbp]
    \vspace{0.2cm}
    \begin{overpic}[tics=20]{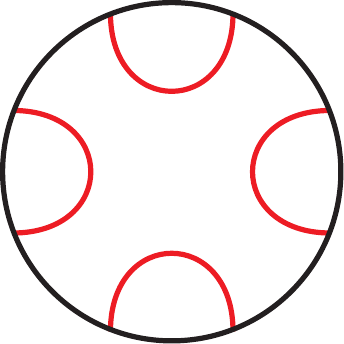}
    \end{overpic}
    \vspace{0.1cm}
    \caption{The dividing curves on a meridian disk in a universally tight $S^1 \times D^2$ with two dividing curves.}
    \label{fig:univ}
  \end{figure}

  Let $f \in \Cont(V,\xi)$. After a small perturbation, we can assume that $D$ and $f(D)$ intersect transversely in a finite set of circles. Choose an innermost circle $c$ among them. Then a disk in $D$ bounded by $c$ and a disk in $f(D)$ bounded by $c$ form a sphere. Since $V$ is irreducible, this sphere bounds a ball, so using this we can isotope the disk in $f(D)$ bounded by $c$ and reduce the number of intersection circles. See Figure~\ref{fig:intersection} for a schematic picture. Repeat this until $D$ and $f(D)$ intersect only in $\partial D$. Again, $D$ and $f(D)$ form a sphere and by irreducibility, this sphere bounds a ball. Thus $D$ and $f(D)$ are smoothly isotopic relative to the boundary. By Theorem~\ref{thm:discretization}, there exists a sequence of convex disks $D_1, \ldots, D_n$ with the identical boundary where $D_1 = D$, $D_n = f(D)$ and $D_{i+1}$ is obtained by attaching a bypass to $D_i$. As we observed above, the only allowable bypasses for $D$ are trivial bypasses, so inductively all $D_i$ has the same dividing set and $D_i$ and $D_{i+1}$ co-bound an $I$-invariant neighborhood by Lemma~\ref{lem:trivial}. Thus $D_i$ and $D_{i+1}$ are contact isotopic for $1 \leq i \leq n-1$ and this implies that $f$ is contact isotopic to a contactomorphism fixing $D$. 

  By Lemma~\ref{lem:fix}, we can further assume that $f$ fixes a small neighborhood $N$ of $\partial V \cup D$. Now pick a sphere $S$ contained in $N$ and parallel to a sphere $\partial N \setminus \partial V$. Perturb $S$ to be convex and let $B$ be the ball in $V$ bounded by $S$. By Eliashberg \cite[Thoerem~2.1.3]{Eliashberg:S3}, there exists a unique tight contact structure on $B$ up to isotopy fixing the characteristic foliation on $S$. Also, according to Eliashberg \cite[Theorem~2.4.2]{Eliashberg:S3}, we have $\pi_0(\Cont(B,\xi|_B)) = 1$. This implies that $f|_B$ is contact isotopic to the identity relative to the boundary. Since $f|_N$ is the identity, $f$ is contact isotopic to the identity relative to the boundary and this completes the proof.
\end{proof}

\begin{figure}[htbp]{\small
  \vspace{0.2cm}
  \begin{overpic}[tics=20]{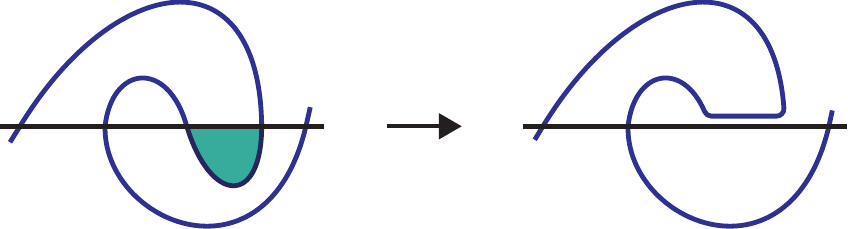}
    \put(-15,47){$D$}
    \put(20,100){$f(D)$}

    \put(237,47){$D$}
    \put(273,100){$f(D)$}
  \end{overpic}}
  \vspace{0.1cm}
  \caption{A schematic picture for $D$ and $f(D)$. A shaded region represents a ball bounded by two disks in $D$ and $f(D)$.}
  \label{fig:intersection}
\end{figure}

We need several steps to classify Legendrian and transverse rational unknots in tight contact structures on lens spaces. The first step is to classify Legendrian representatives of the core in a universally tight contact structure on a solid torus with two dividing curves. Legendrian knots in a solid torus with longitudinal dividing curves were already studied by Etnyre and V\'ertesi \cite{EV:pi1}. Recall that for a Legendrian knot $L$ and a given framing $F$ of $L$, the \dfn{twisting number} $\tw_F(L)$ is the difference between the framing induced by the contact structure and $F$. We note that $\tb(L) = \tw_{F}(L)$ when $F$ is the Seifert framing. Also, for a smooth knot type $K$,the \dfn{maximal twisting number} $\twb_F(K)$ is the maximal value of the twisting numbers with respect to $F$ among all Legendrian representatives of $K$.   

\begin{proposition}\label{prop:core-univ}
  Let $\xi$ be a universally tight contact structure on a solid torus $V = S^1 \times D^2$ with two dividing curves of slope $s$. Then the core of $(V,\xi)$ is Legendrian simple: there exists a unique Legendrian representative $L$ with the maximum twisting number $\twb_F = \lfloor s \rfloor$ where $F$ is the product framing of $V$. Any Legendrian representative of the core is Legendrian isotopic to $L$ or its stabilization. 
\end{proposition}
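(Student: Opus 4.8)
The plan is to translate the problem into convex surface theory by passing between a Legendrian representative $L$ of the core and a standard neighborhood $N(L)$ — a convex solid torus with two dividing curves whose slope records the twisting number $\tw_F(L)$ — and then to reduce the classification to the triviality of the contact mapping class group of the complement established in Theorem~\ref{thm:cmcg-s1d2}. First I would set up the dictionary between $L$ and $N(L)$: for a Legendrian core with $\tw_F(L) = n$, the complement $V \setminus N(L)$ is a toric annulus $T^2 \times I$ whose inner dividing slope is determined by $n$ and whose outer slope is $s$. Tightness of $\xi$ forces this layer to be minimally twisting, and the same Farey-graph argument used in the proof of Theorem~\ref{thm:unknot-coarse} (no non-minimally twisting layer can embed) bounds the twisting above by $\lfloor s\rfloor$. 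Conversely, thickening a neighborhood of an arbitrary Legendrian core across the basic slices of its complement (Theorem~\ref{thm:basic-slice}) produces a representative with $\tw_F = \lfloor s\rfloor$, so $\twb_F = \lfloor s\rfloor$ and a maximal representative exists. When $s\in\mathbb{Z}$ one may take $N(L)=V$ and the complement is empty; the substantive case is $s\notin\mathbb{Z}$.

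Next I would prove uniqueness at maximal twisting. Given two representatives $L_0,L_1$ with $\tw_F = \lfloor s\rfloor$, their standard neighborhoods are contactomorphic because a standard neighborhood is determined by the twisting number, and their complements are toric annuli with identical boundary data (slopes $\lfloor s\rfloor$ and $s$, two dividing curves on each boundary). Since $\xi$ is fixed and universally tight on all of $V$ — equivalently, it has extremal relative Euler class and a well-groomed meridian disk by Theorem~\ref{thm:classification-s1d2} — the tight structure on the complementary annulus is determined by $\xi$ and is the same for $L_0$ and $L_1$, hence the complements are contact isotopic rel boundary. Gluing a contactomorphism of the complements that is the identity on $\partial V$ to one of the neighborhoods yields $\phi \in \Cont(V,\xi)$ with $\phi|_{\partial V} = \mathrm{id}$ and $\phi(L_0) = L_1$. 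By Theorem~\ref{thm:cmcg-s1d2}, $\phi$ is contact isotopic to the identity relative to the boundary, and that isotopy carries $L_0$ to $L_1$; thus the maximal representative $L$ is unique.

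Finally I would handle destabilization. For a representative $L'$ with $\tw_F(L') = n < \lfloor s\rfloor$, the complement $V\setminus N(L')$ is genuinely rotative, so it contains a basic slice and hence a bypass for $\partial N(L')$ attached from the complement side. Attaching it thickens $N(L')$, moving its dividing slope one Farey step toward $\lfloor s\rfloor$ (Theorem~\ref{thm:bypass-torus}), and this thickening realizes $L'$ as a stabilization of a representative of strictly larger twisting. Inducting upward reduces every representative to the unique maximal $L$ up to stabilization, which together with the uniqueness above yields Legendrian simplicity as stated. Throughout, Theorem~\ref{thm:discretization} and Lemma~\ref{lem:fix} guarantee that the smooth isotopies and contactomorphisms encountered can be replaced by bypass sequences and by maps fixing the relevant neighborhoods.

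The hard part will be the uniqueness of the complementary rotative toric annulus rel boundary. A priori a $T^2\times I$ from slope $\lfloor s\rfloor$ to $s$ carries several tight structures differing by the signs of its basic slices, so one must use crucially that the ambient $\xi$ is universally tight — forcing the complement to be the well-groomed layer independent of the chosen representative — in order to build $\phi$ cleanly. Managing the sign bookkeeping precisely when $s\notin\mathbb{Z}$, where Theorem~\ref{thm:classification-s1d2} gives two universally tight structures on $V$, is the delicate point on which the whole argument turns.
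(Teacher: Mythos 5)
Your proposal is correct and follows essentially the same route as the paper: pass to standard neighborhoods, argue the complementary $T^2\times I$ layers of two maximally twisting cores carry the same tight structure, glue to a boundary-fixing contactomorphism of $(V,\xi)$ taking one core to the other, kill it with Theorem~\ref{thm:cmcg-s1d2}, and destabilize lower-twisting representatives by thickening across the basic slice $T(n,n+1)$. The only (harmless) variation is at the step you flag as delicate: the paper settles it directly by the one-to-one correspondence between tight structures on $V$ and on $T(\lfloor s\rfloor,s)$ from Theorem~\ref{thm:classification-s1d2}(2), rather than your extremal relative Euler class argument via Theorem~\ref{thm:classification-s1d2}(5), which also works.
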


\begin{proof}
  We only consider the case $s \in [-1,0)$ since we can realize any dividing slope by the Dehn twists about a meridian disk. 
  
  We first show that there exists a unique Legendrian representative of the core of $(V,\xi)$ with the maximum twisting number $\twb_F=-1$ up to Legendrian isotopy. Suppose $L$ is a Legendrian representative of the core of $(V,\xi)$. The dividing slope of a standard neighborhood $N$ of $L$ is an integer, so $\lfloor s \rfloor = -1$ is the maximum twisting number. Let $L_1$ and $L_2$ be Legendrian representatives of the core with $\tw_F=-1$. Suppose $N_1$ and $N_2$ are standard neighborhoods of $L_1$ and $L_2$, respectively. Then we have $T_i(-1,s) = V \setminus N_i$ for $i=1,2$, which are minimally twisting $T^2 \times I$ layers with dividing slopes $-1$ and $s$. According to Theorem~\ref{thm:classification-s1d2}, a tight contact structure on $V$ is completely determined by the tight contact structure on $T_i(-1,s)$. Thus there exists a coorientation preserving contactomorphism $f \colon T_1(-1,s) \to T_2(-1,s)$ fixing $\partial V$. Since there exists a unique tight contact structure on a standard neighborhood of a Legendrian knot, we can extend $f$ to entire $(V,\xi)$ so that $f(L_1)=L_2$. By Theorem~\ref{thm:cmcg-s1d2}, $f$ is contact isotopic to the identity. Since $f$ sends $L_1$ to $L_2$, they are Legendrian isotopic.
  
  Next, we will show that if $L$ is a Legendrian representative of the core of $(V,\xi)$ with $n = \tw_F(L) < -1$, then $L$ destabilizes. Suppose $N$ is a standard neighborhood of $L$. Then we have $T(n,s) = V \setminus N$, which is a minimally twisting $T^2 \times I$ layer with the dividing slopes $n$ and $s$. Since $n<-1$, we can decompose $T(n,s)$ into $T(n,n+1) \cup T(n+1,s)$. Notice that $T(n,n+1)$ is a basic slice and we can thicken $N$ by attaching $T(n,n+1)$. This corresponds to a destabilization of $L$.
\end{proof}

Next, we improve the result by classifying the Legendrian representatives of the core in any tight contact structure on a solid torus with two dividing curves.  

\begin{proposition}\label{prop:core-any}
  Let $\xi$ be a tight contact structure on a solid torus $V = S^1 \times D^2$ with two dividing curves of slope $s$. Then the core of $(V,\xi)$ is Legendrian simple: there exists a unique Legendrian representative $L$ with the maximum twisting number $\twb_F = \lfloor s \rfloor$ where $F$ is the product framing of $V$. Any Legendrian representative of the core is Legendrian isotopic to $L$ or its stabilization. 
\end{proposition}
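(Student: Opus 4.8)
The plan is to follow the proof of Proposition~\ref{prop:core-univ} as far as it will go and to isolate the one place where universal tightness was essential. As before I may assume $s \in [-1,0)$, so that $\lfloor s \rfloor = -1$, after applying Dehn twists about a meridian disk. The destabilization half is then verbatim: if $L$ is a Legendrian core with $n = \tw_F(L) < -1$ and standard neighborhood $N$, its complement $V \setminus N$ is a minimally twisting $T(n,s)$, which decomposes as $T(n,n+1) \cup T(n+1,s)$; the basic slice $T(n,n+1)$ thickens $N$ and exhibits a destabilization of $L$. So the real work is to prove that there is a unique Legendrian core with $\twb_F = -1$ up to Legendrian isotopy.

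Let $L_1, L_2$ be two such maximal representatives, with standard neighborhoods $N_i$ of dividing slope $-1$ and complements $C_i = V \setminus N_i \cong T(-1,s)$, each a minimally twisting tight $T^2 \times I$. The construction of a comparison contactomorphism does not use universal tightness: the correspondence of Theorem~\ref{thm:classification-s1d2}(2) between tight structures on $V$ and on $T(\lfloor s\rfloor, s)$, together with the uniqueness of the tight structure on a standard neighborhood, forces $C_1$ and $C_2$ to be contactomorphic relative to $\partial V$, and this contactomorphism extends to $f\colon(V,\xi)\to(V,\xi)$ with $f|_{\partial V} = \mathrm{id}$ and $f(L_1) = L_2$. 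The single step of Proposition~\ref{prop:core-univ} that is now unavailable is the appeal to $\pi_0(\Cont(V,\xi)) = 1$ (Theorem~\ref{thm:cmcg-s1d2}), and replacing it is the heart of the matter.

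To sidestep the contact mapping class group of a general tight $V$, I would pass to a universally tight solid torus around each core. Let $-1 = t_0, t_1, \dots, t_m = s$ be the shortest Farey path. Since $t_1$ is adjacent to $-1$, the slice $T(-1,t_1)$ in $C_i$ is a single basic slice, so $W_i := N_i \cup T(-1,t_1)$ is a universally tight solid torus with two dividing curves of slope $t_1 \in (-1,0)$, whose core is $L_i$ realized with maximal twisting $\lfloor t_1\rfloor = -1$. By Proposition~\ref{prop:core-univ}, $L_i$ is the unique maximal core of $W_i$. It therefore suffices to show that $W_1$ and $W_2$ are contact isotopic in $(V,\xi)$: a contact isotopy $\Phi_t$ with $\Phi_1(W_1) = W_2$ carries $L_1$ to a maximal core of $W_2$, which is Legendrian isotopic to $L_2$ by Proposition~\ref{prop:core-univ}, and concatenating gives the desired isotopy. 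Equivalently, I must show that the convex torus $\partial W_i$ of slope $t_1$, parallel to $\partial V$, is unique up to contact isotopy relative to $\partial V$.

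This convex-torus uniqueness is the main obstacle, and it is exactly the phenomenon that forced the universally tight hypothesis in Proposition~\ref{prop:core-univ}. I would prove that two convex tori parallel to $\partial V$ with two dividing curves of a common slope are contact isotopic rel $\partial V$ by running isotopy discretization (Theorem~\ref{thm:discretization}) on a smooth parallel isotopy between them, obtaining a finite chain of convex tori joined by single bypass attachments (possibly organized as an induction that peels off the outer basic slice $T(t_{m-1},s)$ by cutting along a slope-$t_{m-1}$ torus into a smaller tight solid torus). Minimal twisting confines every intermediate dividing slope to the arc between $-1$ and $s$, and Theorem~\ref{thm:bypass-torus} together with the attach=dig principle (Theorem~\ref{thm:attach=dig}) should let me cancel any effective, slope-changing bypass against a later one, leaving only trivial bypasses, to which Lemma~\ref{lem:trivial} applies. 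The delicate case is the mixed-sign (virtually overtwisted) structure, where the complement-matching contactomorphism $f$ need not itself be contact isotopic to the identity; controlling the bypasses there, so that the intervening basic-slice signs are respected, is where the argument genuinely goes beyond the universally tight setting.
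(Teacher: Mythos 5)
Your setup is fine as far as it goes: the reduction to $s\in[-1,0)$, the destabilization argument, and the construction of the tubes $W_i=N_i\cup T(-1,t_1)$ are all sound (indeed any solid torus with two dividing curves of slope $t_1$ Farey-adjacent to $-1$ is automatically universally tight, since by Theorem~\ref{thm:classification-s1d2} it carries exactly two tight structures, both basic slices). But the step you yourself identify as the heart of the matter --- that the convex torus of slope $t_1$ parallel to $\partial V$ is unique up to contact isotopy rel $\partial V$ --- is both unproven and, as stated, false in the virtually overtwisted case. If the continued-fraction block of the Farey path from $-1$ to $s$ containing the edge from $-1$ to $t_1$ has basic slices of both signs, then sign-shuffling within the block realizes \emph{both} signs on the innermost slice $T(-1,t_1)$; the resulting parallel convex tori of slope $t_1$ with two dividing curves are not contact isotopic, since the relative Euler class of the inner solid torus is a contact-isotopy invariant. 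So ``it suffices to show $W_1$ and $W_2$ are contact isotopic'' requires at least a matching of these signs, and even after such a fix your proposed mechanism --- running Theorem~\ref{thm:discretization} and cancelling each effective bypass against a later one using Theorem~\ref{thm:bypass-torus} and Theorem~\ref{thm:attach=dig} --- has no actual engine behind it: nothing in that toolbox pairs up and cancels slope-changing bypasses in a discretized isotopy, and in a virtually overtwisted $V$ non-trivial bypasses genuinely occur (this is exactly why Theorem~\ref{thm:cmcg-s1d2} is stated only for universally tight structures, where every bypass on the meridian disk is forced to be trivial). Proving your torus-uniqueness claim, even in the sign-corrected form, is essentially as hard as the proposition itself.

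The paper avoids this entirely: it never compares parallel Heegaard-type tori, but instead works on a single convex meridian disk $D$ chosen to meet $L_2$ once, places ruling curves of tubes around $L_1$ and $L_2$ on $D$, and walks $L_1$ across the dividing curves of $D$ one at a time. Each elementary move happens inside a small solid torus $\overline N$ that is \emph{automatically} universally tight --- either because its meridian disk carries only boundary-parallel dividing curves (extremal relative Euler class, Theorem~\ref{thm:classification-s1d2}(5)) or because it has two dividing curves of integer slope $-1$ (Theorem~\ref{thm:classification-s1d2}(3)), with the attach=dig principle disposing of extra dividing curves --- so Proposition~\ref{prop:core-univ} applies locally at every step, and a final dig-out reduces to a universally tight torus containing both cores. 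Your proposal correctly isolates where universal tightness enters, and the local-universal-tightness idea is the right instinct, but to complete the argument you would need to replace the global torus-uniqueness claim by step-by-step moves of the knot itself, each confined to a region where universal tightness is forced, as the paper does.
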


\begin{proof}
  Again, we only consider the case $s \in [-1,0)$ since we can realize any dividing slope by the Dehn twists about a meridian disk. 
 
  We first show that there exists a unique Legendrian representative of the core of $(V,\xi)$ with the maximum twisting number $\twb_F=-1$ up to Legendrian isotopy. Suppose $L$ is a Legendrian representative of the core of $(V,\xi)$. The dividing slope of a standard neighborhood $N$ of $L$ is an integer, so $\lfloor s \rfloor = -1$ is the maximum twisting number. Let $L_1$ and $L_2$ be Legendrian representatives of the core with $\tw_F=-1$. Take a meridian disk $D$ of $V$ intersecting $L_2$ transversely once. Perturb $D$ to be convex with Legendrian boundary such that $\partial D$ intersects $\Gamma_{\partial V}$ minimally and $D$ intersects $L_1$ transversely. We will consider two cases according to the intersection number between $L_1$ and $D$.
 
  First, we consider the case  $|D \cap L_1| = 1$. Suppose $N_1$ and $N_2$ are standard neighborhoods of $L_1$ and $L_2$, respectively. After perturbing $D$ and $\partial N_i$, we can assume that the ruling slope of $N_i$ is $\infty$ and there exists a ruling curve $c_i$ that lies on $D$ and it is the only intersection between $\partial N_i$ and $D$ for $i = 1,2$. Since $\tb(c_1)=\tb(c_2)=-1$ by the equality (\ref{eq:tb}), each $c_1$ and $c_2$ intersects a dividing curve on $D$ at two points. See Figure~\ref{fig:disk1} for example. Choose components $d_1,\ldots,d_n$ of the dividing set of $D$ such that $c_1$ intersects $d_1$, $c_2$ intersects $d_n$, and $d_i$ and $d_{i+1}$ are adjacent. We claim that we can isotope $L_1$ through Legendrian knots so that $c_1$ intersects $d_2$ and does not intersect any other dividing curve on $D$. Take a solid torus $\overline{N}$ such that $\overline{N}$ contains $N_1$ and $\partial \overline{N}$ intersects $D$ in a closed curve $\overline{c}$ that contains $c_1$ and intersects $d_1$ and $d_2$ at two points each. See Figure~\ref{fig:disk2} for example. Perturb $\partial \overline{N}$ to be convex and $\overline{c}$ to be Legendrian. Let $\overline{s}$ be the dividing slope of $\overline{N}$. By the equality (\ref{eq:tb}), we have $\tb(\overline{c})=-2$. Due to this fact, there are only three cases we need to consider for the dividing curves on $\partial \overline{N}$.  

  The first case is $\overline{s} > -1$. Let $2n$ be the number of dividing curves on $\partial \overline{N}$ and $\overline{s} = p/q$ for $|p| > q \geq1$. Since the dividing set interleaves, $\left|\overline{c} \cap \Gamma_D\right| = \left|\overline{c} \cap \Gamma_{\partial \overline{N}}\right|$. Thus we have 
  \[
    \tb(\overline{c}) = -2 = -\frac12\left|\overline{c} \cap \Gamma_D\right| = -\frac12\left|\overline{c} \cap \Gamma_{\partial \overline{N}}\right| \leq -n\left|\frac pq \bigcdot \frac 10\right| = -nq. 
  \]
  The equality holds if and only if $\overline{c}$ intersects $\Gamma_{\partial \overline{N}}$ minimally. Since $-1 < \overline{s} \leq s \in (-1,0)$, we have $q > 1$ and this implies that $n = 1$. Thus there are two dividing curves on $\partial \overline{N}$. Notice that the disk $\overline{D} \subset D$, bounded by $\overline{c}$, contains two boundary-parallel dividing curves as shown in Figure~\ref{fig:disk2}. Notice that these two dividing curves are a part of $d_1$ and $d_2$, but we just relabel them as $d_1$ and $d_2$. According to Theorem~\ref{thm:imba}, we can take a bypass lying on $\overline{D}$ containing the dividing curve $d_1$. Remove a bypass attachment of this bypass from $\overline{N}$. Then by Theorem~\ref{thm:bypass-torus}, the resulting solid torus $\overline{N}_1$ has two dividing curves of slope $\overline{s}_1$ satisfying $-1 \leq \overline{s}_1 < \overline{s}$, and the resulting meridian disk $\overline{D}_1$ contains the single dividing curve $d_2$. Perturb $\overline{c}_1 =\partial \overline{D}_1$ to be Legendrian. Then by equality (\ref{eq:tb}), we have $\tb(\overline{c}_1) = -1$. Since the dividing set interleaves, we have 
  \[
    \tb(\overline{c}) = -1 \leq -\left|\overline{s}_1 \bigcdot \frac 10\right|,
  \]
  which implies that $\overline{s}_1$ is an integer. Thus $\overline{s}_1 = -1$ and $\overline{N}_1$ has two dividing curves of slope $-1$. Let $\overline{L}_1$ be a Legendrian representative of the core of $\overline{N}_1$ with $\tw_F=-1$. Then $\overline{N}_1$ is a standard neighborhood of $\overline{L}_1$. Since $\overline{D}$ only contains boundary-parallel dividing curves, the restricted contact structure $\xi|_{\overline{N}}$ is universally tight by Theorem~\ref{thm:classification-s1d2}. Since $\overline{N}$ contains both $L_1$ and $\overline{L}_1$, by Proposition~\ref{prop:core-univ}, $L_1$ is Legendrian isotopic to $\overline{L}_1$. Notice that $\overline{c}_1$ intersects $d_2$ and does not intersect any other dividing curve on $D$. 
 
  \begin{figure}[htbp]
    \vspace{0.2cm}
    \begin{overpic}[tics=20]{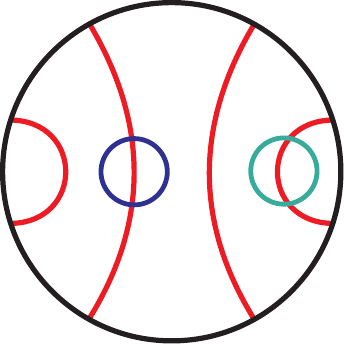}
    \end{overpic}
    \vspace{0.1cm}
    \caption{The red curves are the dividing curves on a convex disk $D$. The closed curves are Legendrian curves.}
    \label{fig:disk1}
  \end{figure}   

  The second case is $\overline{s} = -1$ and there are four dividing curves on $\partial \overline{N}$. Again, the disk $\overline{D} \subset D$, bounded by $\overline{c}$, contains two boundary parallel dividing curves as shown in Figure~\ref{fig:disk2}. According to Theorem~\ref{thm:imba}, we can take a bypass lying on $\overline{D}$ containing the dividing curve $d_1$. Remove a bypass attachment of this bypass from $\overline{N}$ and let $\overline{N}_1$ be the resulting solid torus and $\overline{D}_1$ be the resulting meridian disk. Perturb $\overline{c}_1 = \partial \overline{D}_1$ to be Legendrian. Since $\overline{D}_1$ contains the single dividing curve $d_2$, we have $\tb(\overline{c}_1) = -1$. Thus there are two dividing curves on $\partial \overline{N}_1$ as discussed in the first case. Since there are more than two dividing curves on $\partial \overline{N}$, the bypass attachment does not change the dividing slope. Thus $\overline{N}_1$ has two dividing curves of slope $-1$ and $\overline{N} \setminus \overline{N}_1$ is a non-rotative layer. Let $\overline{L}_1$ be a Legendrian representative of the core of $\overline{N}_1$ with $\tw_F = -1$. Then $\overline{N}_1$ is a standard neighborhood of $\overline{L}_1$. By the attach=dig principle (Theorem~\ref{thm:attach=dig}), there is a solid torus $\widetilde{N}$ containing $\overline{N}$ with two dividing curves of slope $-1$. By Theorem~\ref{thm:classification-s1d2}, the restricted contact structure $\xi|_{\widetilde{N}}$ is universally tight. Since $\widetilde{N}$ contains both $L_1$ and $\overline{L}_1$, by Proposition~\ref{prop:core-univ}, $L_1$ is Legendrian isotopic to $\overline{L}_1$. Notice that $\overline{c}_1$ intersects $d_2$ and does not intersect any other dividing curve on $D$. 
 
  \begin{figure}[htbp]
    \vspace{0.2cm}
    \begin{overpic}[tics=20]{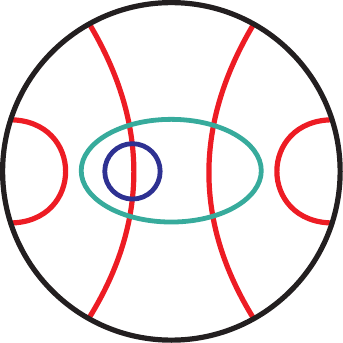}
    \end{overpic}
    \vspace{0.1cm}
    \caption{The red curves are the dividing curves on a convex disk $D$. The closed curves are Legendrian curves.}
    \label{fig:disk2}
  \end{figure}

  The third case is $\overline{s} = -1$ and there are two dividing curves on $\partial \overline{N}$. In this case, $\overline{c}$ does not intersect $\Gamma_{\partial \overline{N}}$ minimally. However, the disk $\overline{D} \subset D$, bounded by $\overline{c}$, still contains two boundary parallel dividing curves as shown in Figure~\ref{fig:disk2}. According to Theorem~\ref{thm:imba}, we can take a bypass lying on $\overline{D}$ containing the dividing curve $d_1$. Remove a bypass attachment of this bypass from $\overline{N}$ and let $\overline{N}_1$ be the resulting solid torus and $\overline{D}_1$ be the resulting meridian disk. Perturb $\overline{c}_1 = \partial \overline{D}_1$ to be Legendrian. Since $\overline{D}_1$ contains the single dividing curve $d_2$, we have $\tb(\overline{c}_1) = -1$. Thus there are two dividing curves on $\partial \overline{N}_1$ as discussed in the first case. Since $\overline{c}$ does not intersect $\Gamma_{\partial \overline{N}}$ minimally, the bypass is not effective and the bypass attachment does not change the dividing slope. Thus $\overline{N}_1$ has two dividing curves of slope $-1$. Let $\overline{L}_1$ be a Legendrian representative of the core of $\overline{N}_1$ with $\tw_F = -1$. Then $\overline{N}_1$ is a standard neighborhood of $\overline{L}_1$. By Theorem~\ref{thm:classification-s1d2}, the restricted contact structure $\xi|_{\overline{N}}$ is universally tight. Since $\overline{N}$ contains both $L_1$ and $\overline{L}_1$, by Proposition~\ref{prop:core-univ}, $L_1$ is Legendrian isotopic to $\overline{L}_1$. Notice that $\overline{c}_1$ intersects $d_2$ and does not intersect any other dividing curve on $D$.
  
  We just have proved the claim. By applying the claim inductively, we can isotope $L_1$ through Legendrian knots until $c_1$ intersects $d_n$ and does not intersect any other dividing curve on $D$. After that, take a bypass lying on $D$ which does not contain $d_n$, and remove a bypass attachment of the bypass from $V$. Repeat this until there is only one dividing curve, $d_n$, left. See Figure~\ref{fig:disk3} for example. Let $\overline{V}$ be the resulting solid torus and $\overline{D}$ be the resulting meridian disk. Perturb $\overline{c} = \partial \overline{D}$ to be Legendrian. Since $\overline{D}$ contains the single dividing curve $d_n$, we have $\tb(\overline{c}) = -1$. Let $\overline{s} = p/q$ be the dividing slope of $\partial \overline{V}$ and $2n$ be the number of dividing curves. Since the dividing set interleaves, we have
  \[
     \tb(\overline{c}) = -1 \leq -n\left|\frac pq \bigcdot \frac 10\right|.
  \] 
  The equality holds if and only if $\overline{c}$ intersects $\Gamma_{\partial V}$ minimally. From the inequality, we have $n=1$ and $q=1$. Thus there are two dividing curves on $\partial \overline{V}$ and $\overline{s}$ is an integer. Since none of the bypasses intersects both $N_1$ and $N_2$ and the bypass attachment is a local operation, $\overline{V}$ contains both $N_1$ and $N_2$. Thus we have $-1 \leq \overline{s} \leq s \in [-1,0)$ and this implies that $\overline{s}=-1$. Thus $\overline{V}$ has two dividing curves of slope $-1$. By Theorem~\ref{thm:classification-s1d2}, the restricted contact structure $\xi_{\overline{V}}$ is universally tight. Since $\overline{V}$ contains both $L_1$ and $L_2$, by Proposition~\ref{prop:core-univ}, $L_1$ and $L_2$ are Legendrian isotopic.

  \begin{figure}[htbp]
    \vspace{0.2cm}
    \begin{overpic}[tics=20]{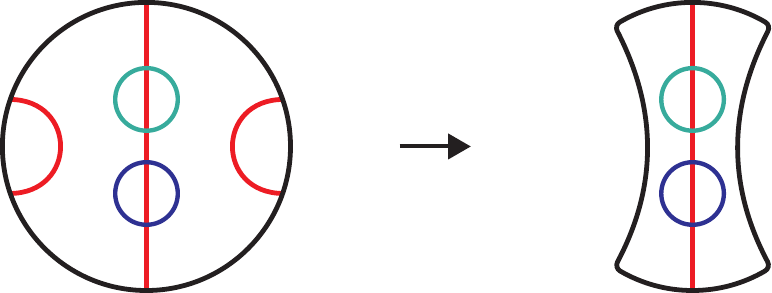}
    \end{overpic}
    \vspace{0.1cm}
    \caption{The red curves are the dividing curves on a convex disk $D$. The closed curves are Legendrian curves.}
    \label{fig:disk3}
  \end{figure}

  Next, we consider the case $m := |D \cap L_1| > 1$. In this case, we can perturb $\partial N_1$ so that the ruling slope is $\infty$ and there are $m$ ruling curves $c_1^1, \ldots, c_m^1$ lying on $D$ and each $c_i^1$ intersects a dividing curve on $D$ at two points. Similarly, we can also perturb $\partial N_2$ so that the ruling slope is $\infty$ and there is a ruling curve $c^2$ lying on $D$ intersecting a dividing curve on $D$ at two points. Choose components $d_1,\ldots,d_n$ of the dividing set of $D$ such that $c_1^1$ intersects $d_1$, $c^2$ intersects $d_n$, and $d_i$ and $d_{i+1}$ are adjacent. We claim that we can isotope $L_1$ through Legendrian knots so that $c_1^1$ intersects $d_2$, while fixing other $c_i^1$ for $2 \leq i \leq m$. After perturbing $D$, we can take a solid torus $\overline{N}$ such that $\overline{N}$ contains $N_1$ and there are $m$ ruling curves $\overline{c}_1,\ldots,\overline{c}_m$ of $\partial \overline{N}$ lying on $D$ such that $\overline{c}_i = c_i^1$ for $2 \leq i \leq m$, the disk bounded by $\overline{c}_1$ contains $c_1^1$ and $\overline{c}_1$ intersects $d_1$ and $d_2$ at four points. By the equality (\ref{eq:tb}), we have $\tb(\overline{c}_2) = -1$ and this implies that there are two dividing curves on $\partial \overline{N}$ and the dividing slope $\overline{s}$ is an integer as discussed above. Also, since $\overline{N}$ contains $N_1$, we have $-1 \leq \overline{s} \leq s \in [-1,0)$ and $\overline{s} = -1$. Thus $\overline{N}$ has two dividing curves of slope $-1$. This implies that $\overline{c}_1$ does not intersect $\Gamma_{\partial \overline{N}}$ minimally. Since there are two boundary-parallel dividing curves on the disk $\overline{D} \subset D$, bounded by $\overline{c}_1$, we can find a bypass lying on $\overline{D}$ that contains $d_1$ according to Theorem~\ref{thm:imba}. Remove a bypass attachment of this bypass from $\overline{N}$ and let $\overline{N}_1$ be the resulting solid torus and $\overline{D}_1$ be the resulting meridian disk. Since $\overline{D}_1$ contains the single dividing curve $d_2$, there are still two dividing curves on $\overline{N}_1$. Since $\overline{c}_1$ does not intersect $\Gamma_{\partial \overline{N}}$ minimally, the bypass is not effective and the bypass attachment does not change the dividing slope. Thus $\overline{N}_1$ has two dividing curves of slope $-1$. Let $\overline{L}_1$ be a Legendrian representative of the core of $\overline{N}_1$ with $\tw_F = -1$. Then $\overline{N}_1$ is a standard neighborhood of $\overline{L}_1$. By Theorem~\ref{thm:classification-s1d2}, the restricted contact structure $\xi|_{\overline{N}}$ is universally tight. Since $\overline{N}$ contains both $L_1$ and $\overline{L}_1$, by Proposition~\ref{prop:core-univ}, $L_1$ is Legendrian isotopic to $\overline{L}_1$. This completes the claim. 
  
  By applying the claim inductively, we can isotope $L_1$ through Legendrian knots until $c_1^1$ intersects $d_n$ while fixing other $c_i^1$ for $2 \leq i \leq m$. After that, apply the claim to $c_2^1$ and we can isotope $L_1$ through Legendrian knots until $c_2^1$ intersects $d_n$ while fixing other $c_i^1$. Repeat the argument until all $c_i^1$ for $1 \leq i \leq m$ intersect $d_n$. Now using Theorem~\ref{thm:imba}, take a bypass lying on $D$ that does not contain $d_n$ and remove a bypass attachment of this bypass from $V$. Repeat this until there is only one dividing curve, $d_n$, left. Let $\overline{V}$ be the resulting solid torus and $\overline{D}$ be the resulting meridian disk. Perturb $\overline{c} = \partial\overline{D}$ to be Legendrian. Since $\overline{D}$ contains the single dividing curve $d_n$, we have $\tb(\overline{c}) = -1$ and this implies that there are two dividing curves on $\partial \overline{V}$ and the dividing slope $\overline{s}$ is an integer as discussed above. Since none of the bypasses intersects both $N_1$ and $N_2$ and the bypass attachment is a local operation, $\overline{V}$ contains both $N_1$ and $N_2$. Thus we have $-1 \leq \overline{s} \leq s \in [-1,0)$ and this implies that $\overline{s}=-1$. Thus $\overline{V}$ has two dividing curves of slope $-1$. By Theorem~\ref{thm:classification-s1d2}, the restricted contact structure $\xi_{\overline{V}}$ is universally tight. Since $\overline{V}$ contains both $L_1$ and $L_2$, by Proposition~\ref{prop:core-univ}, $L_1$ and $L_2$ are Legendrian isotopic. 
  
  Lastly, we show that if $L$ is a Legendrian representative of the core with $n := \tw_F < -1$, then $L$ destabilizes. Suppose $N$ is a standard neighborhood of $L$. Then we have $V \setminus N = T(n,s)$, which is a minimally twisting $T^2 \times I$ layer with the dividing slopes $n$ and $s$. Since $n<-1$, we can decompose $T(n,s)$ into $T(n,n+1) \cup T(n+1,s)$. Notice that $T(n,n+1)$ is a basic slice and we can thicken $N$ by attaching $T(n,n+1)$. This corresponds to a destabilization of $L$.
\end{proof}

Before we classify Legendrian and transverse rational unknots, we need one additional lemma.

\begin{lemma} \label{lem:nbhd}
  Let $L$ be a Legendrian rational unknot in a tight contact structure on $L(p,q)$, which is smoothly isotopic to $K_1$ and $N(L)$ a standard neighborhood of $L$. If $\tb_{\mathbb{Q}}(L) = \tbb_{\mathbb{Q}}(K_1)$, then there is a decomposition $L(p,q) = S_{-p/q}(-1) \cup S^0(-1)$ such that $N(L)=S^0(-1)$.
\end{lemma}

\begin{proof}
  Recall from Section~\ref{sec:lens} and~\ref{sec:mcg-lens} that any tight contact structure on $L(p,q)$ can be decomposed into $S_{-p/q}(-1)$ and $S^0(-1)$, and $K_1$ is the core of $S^0(-1)$ and $K_2$ is the core $S_{-p/q}(-1)$. Also, we showed that there is a Legendrian representative $L_1$ of $K_1$, whose standard neighborhood is $S^0(-1)$, see Figure~\ref{fig:unknots}. By Theorem~\ref{thm:classification-lens}, this decomposition is unique, so there exists a unique $L_1$ up to contactomorphism. We will show $\tb_{\mathbb{Q}}(L_1) = \tbb_{\mathbb{Q}}(K_1)$. This will imply that $L_1$ is coarsely equivalent to $L$ and proves the lemma. Let $N$ be a standard neighborhood of a Legendrian representative of $K_1$. Notice that $N$ has longitudinal dividing curves of $S^0$, which implies there is an edge between the dividing slope $s$ and $0$ in the Farey graph, so $s=1/n$ for some $n \in \mathbb{Z}$. If $n \geq 0$, then a non-minimally twisting $T^2 \times I$ layer embeds in $L(p,q)$, which contradicts the tightness of $\xi$. Thus $n \leq -1$ and $N$ is $S^0(1/n)$ and the complement of $N$ is $S_{-p/q}(1/n)$. Clearly the largest solid torus among $S^0(1/n)$ is $S^0(-1)$ and it contains all other $S^0(1/n)$. This implies that $L_1$ has the maximal rational Thurston--Bennequin invariant.
\end{proof}

Now we are ready to classify Legendrian and transverse rational unknots in any tight contact structure on lens spaces. We first show the Legendrian simplicity. 

\begin{proposition}\label{prop:unknot}
  Let $\xi$ be a tight contact structure on a lens space $L(p,q)$ and $K$ be an oriented rational unknot in $L(p,q)$. Then there exists a unique Legendrian representative $L$ of $K$ in $\xi$ such that any Legendrian representative of $K$ is Legendrian isotopic to $L$ or its stabilization.
\end{proposition}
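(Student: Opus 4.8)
The plan is to reduce everything to Proposition~\ref{prop:core-any}. Recall from Section~\ref{sec:lens} that $(L(p,q),\xi)$ splits along a convex Heegaard torus $T$ as $V_1 \cup_T V_2$ with $V_1 = S(-1,0;u)$ and $V_2 = S(-1,-p/q;l)$, and that $K_1$ is the core of $V_1$. Running the identical argument with $V_1$ and $V_2$ interchanged treats $K_2$, and reversing orientations treats $-K_i$, so it suffices to treat $K = K_1$. Since $\xi|_{V_1}$ is a tight contact structure on a solid torus with two dividing curves, Proposition~\ref{prop:core-any} already classifies the cores of $(V_1,\xi|_{V_1})$: there is a unique one of maximal twisting, namely $L_1$, and every other is its stabilization. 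Hence the proposition will follow once I show that every Legendrian representative of $K_1$ can be contact-isotoped to lie in $V_1$ as a core.

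To do this I would push $L$ off the complementary solid torus $V_2$. The natural surface to use is a rational Seifert surface of $K_1$, i.e.\ a meridian disk $\Delta$ of $V_2$, perturbed convex with Legendrian boundary meeting $\Gamma_T$ minimally; as $\partial\Delta$ has slope $-p/q$ and $\Gamma_T$ has slope $-1$, the equality~(\ref{eq:tb}) gives $\tb(\partial\Delta) = -(p-q)$, so for $p-q>1$ the disk $\Delta$ carries several boundary-parallel dividing curves and sees the (possibly non-universally-tight) structure of $\xi|_{V_2}$. After arranging $L$ to meet $T$ and $\Delta$ transversely, I would use Theorem~\ref{thm:imba} to extract bypasses from the boundary-parallel dividing curves of $\Delta$, Theorem~\ref{thm:bypass-torus} to track the dividing slopes, and Colin's isotopy discretization (Theorem~\ref{thm:discretization}) to organize the simplification into single bypass attachments, in order to remove the intersections of $L$ with $\Delta$. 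Once $L$ is disjoint from $\Delta$, its intersection with $V_2$ lies in the ball $V_2 \setminus \Delta$; as $L$ is isotopic to the core of $V_1$ it cannot be contained in that ball, so $L$ can be pushed across $T$ entirely into $V_1$, where it becomes a core. Proposition~\ref{prop:core-any} then identifies $L$ with $L_1$ or a stabilization. The trivial bypasses encountered along the way produce $I$-invariant neighborhoods by Lemma~\ref{lem:trivial}, so each simplification is realized by an honest contact isotopy, and Lemma~\ref{lem:fix} lets me fix neighborhoods of $\Delta$ and of $L$ as I proceed.

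I expect the removal of the intersections of $L$ with $\Delta$ to be the main obstacle. A priori a bypass used to simplify $L$ relative to $\Delta$ could raise the number of dividing curves, create a contractible dividing curve, or change the dividing slope of a neighbouring convex torus, none of which manifestly reduces complexity. The crux is to combine the global tightness of $\xi$ with the classification of tight structures on $L(p,q)$ (Theorem~\ref{thm:classification-lens}) --- in particular with the fact that $\xi|_{V_2}$ is minimally twisting, with all parallel dividing slopes confined to the Farey arc from $-p/q$ to $-1$ --- to show that only trivial or complexity-reducing bypasses can appear, so that the procedure terminates with $L$ inside $V_1$. This is the closed-manifold counterpart of the three-case bypass bookkeeping carried out in Proposition~\ref{prop:core-any}, and I expect it to absorb essentially all of the technical work.
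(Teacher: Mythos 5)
There is a genuine gap, and it sits exactly where you placed your hopes. The central step of your plan---contact-isotoping an arbitrary Legendrian representative $L$ of $K_1$ into the fixed solid torus $V_1 = S(-1,0;u)$ by clearing its intersections with a convex meridian disk $\Delta$ of $V_2$---is essentially the full content of the proposition, and the mechanism you sketch does not deliver it. Bypasses extracted from $\Delta$ via Theorem~\ref{thm:imba} isotope \emph{surfaces}, not knots; to convert them into moves of $L$ you must thicken $L$ to a standard neighborhood $N$ and play $\partial N$ against $\Delta$, and here two problems arise. First, since $\Delta$ lies in the complementary torus, the intersections $L \cap \Delta$ are algebraically zero but geometrically present, and disjoining a Legendrian knot from a surface (and then pushing the arcs of $L$ in the ball $V_2 \setminus \Delta$ across the Heegaard torus) by an ambient contact isotopy can a priori cost stabilizations of $L$; your argument would then only show that \emph{some stabilization} of $L$ becomes a core of $V_1$, which destroys the uniqueness claim at $\tbb_{\mathbb{Q}}$. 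Second, the expectation that tightness plus Theorem~\ref{thm:classification-lens} forces every bypass in the discretization to be trivial or complexity-reducing is false: when one discretizes an isotopy between two standard neighborhoods of maximal-$\tb_{\mathbb{Q}}$ representatives, the intermediate convex tori genuinely wander---their dividing slopes move both clockwise and counterclockwise along the Farey arc between $-p/q$ and $0$, and the number of dividing curves can increase, controlled only a posteriori by the attach{=}dig principle (Theorem~\ref{thm:attach=dig}). Minimal twisting confines the slopes to an arc; it does not make the nontrivial bypasses go away.

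The paper's proof avoids your reduction entirely: it never isotopes $L$ into a fixed $V_1$. Instead it takes standard neighborhoods $N$, $N'$ of two maximal-$\tb_{\mathbb{Q}}$ representatives, applies Theorem~\ref{thm:discretization} to a smooth isotopy of the \emph{tori}, and associates to each intermediate solid torus $N_i$ (of dividing slope $s_i$) a maximal-twisting Legendrian core $L_i$, proving $L_i$ is Legendrian isotopic to $L_{i+1}$ by a two-sided case analysis: when $s_i, s_{i+1} \leq -1$, both cores lie in the larger of $N_i$, $N_{i+1}$ (after thickening by attach{=}dig to restore two dividing curves) and Proposition~\ref{prop:core-any} applies there; when $s_i, s_{i+1} \geq -1$, one flips to the complement, which contains $S(s_1,-p/q;l)$, compares auxiliary cores $\overline{L}_i$, $\overline{L}_{i+1}$ there via Proposition~\ref{prop:core-any}, and then applies Proposition~\ref{prop:core-any} once more to place $L_i$ and $L_{i+1}$ in a common solid torus. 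This device of tracking a core on whichever side of the Heegaard torus remains controlled is precisely what your sketch lacks, and it is where all the work lives; as written, your proposal defers that work rather than doing it. The one harmless part is destabilization below $\tbb_{\mathbb{Q}}$, which indeed follows from Theorem~\ref{thm:unknot-coarse} with no new argument.
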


\begin{proof}
  Recall from Section~\ref{sec:lens} that a tight contact structure on $\xi$ on $L(p,q)$ can be decomposed into $S_{-p/q}(-1) \cup S^0(-1)$. Also, recall that $s_0,\ldots,s_n$ are the vertices of the shortest path in the Farey graph where $s_0 = -p/q$ and $s_n = 0$. Thus $\xi$ can also be decomposed into $S_{-p/q}(s_1)$ and $S^0(s_1)$. Suppose $K = K_1$. We first show that there exists a unique Legendrian representative of $K$ with $\tbb_{\mathbb{Q}}(K)$ up to Legendrian isotopy.
  
  Let $L$ and $L'$ be Legendrian representatives of $K$ with $\tb_{\mathbb{Q}}(L) = \tb_{\mathbb{Q}}(L') = \tbb_{\mathbb{Q}}(K)$, and $N$ and $N'$ be standard neighborhoods of $L$ and $L'$, respectively. By Lemma~\ref{lem:nbhd}, both $N$ and $N'$ are $S^0(-1)$, i.e., they have two dividing curves of slope $-1$ with the upper meridional slope $0$. Since $L$ and $L'$ are smoothly isotopic, there exists a smooth isotopy from $N$ to $N'$. Then by Theorem~\ref{thm:discretization}, there exists a sequence of solid tori $N_1, \ldots, N_n$ where $N_1 = N$, $N_n = N'$ and $N_{i+1}$ is obtained by attaching or removing a bypass to $\partial N_i$. Let $s_i$ be the dividing slope of $N_i$. Here, we define Legendrian representatives $L_i$ associated to $N_i$ as follows. First, if $s_i \leq -1$, then $N_i$ contains a solid torus $S^0(-1)$. Define $L_i$ to be a Legendrian representative of the core of this $S^0(-1)$ with the maximum twisting number. Notice that this $S^0(-1)$ is a standard neighborhood of $L_i$ so $\tb_{\mathbb{Q}}(L_i) = \tbb_{\mathbb{Q}}(K)$. If $s_i > -1$, then $N_i$ is contained in some $S^0(-1)$. Define $L_i$ to be a Legendrian representative of the core of this $S^0(-1)$ with the maximum twisting number. Notice that this $S^0(-1)$ is a standard neighborhood of $L_i$ so $\tb_{\mathbb{Q}}(L_i) = \tbb_{\mathbb{Q}}(K)$. From the definition, we can choose $L_0$ to be $L$ and $L_n$ to be $L'$. We claim that $L_i$ and $L_{i+1}$ are Legendrian isotopic, and this implies that $L$ and $L'$ are Legendrian isotopic by induction. Observe that if $s_i > -1$, then $s_{i+1} \geq -1$ by Theorem~\ref{thm:bypass-torus}. Similarly, if $s_i < -1$, then $s_{i+1} \leq -1$. Due to this fact, there are only two cases we need to consider. 
  
  The first case is $s_i, s_{i+1} \leq -1$. Assume that $N_{i+1}$ is obtained by attaching a bypass to $\partial N_i$ which is contained in $N_i$. In this case, $N_i$ contains $N_{i+1}$ and this implies that $N_i$ contains both $L_i$ and $L_{i+1}$. If $\partial N_i$ has more than two dividing curves, then by the attach=dig principle (Theorem~\ref{thm:attach=dig}), we can thicken $N_i$ and reduce the number of dividing curves. Also notice that $L_i$ and $L_{i+1}$ still have the maximum twisting number in $N_i$. If not, they destabilize and it contradicts that they have $\tbb_{\mathbb{Q}}(K)$. Thus by Proposition~\ref{prop:core-any}, $L_i$ and $L_{i+1}$ are Legendrian isotopic. Now assume that $N_{i+1}$ is obtained by attaching a bypass to $\partial N_i$ which is not contained in $N_i$. Then $N_{i+1}$ contains $N_i$ and this implies that $N_{i+1}$ contains both $L_i$ and $L_{i+1}$. Thus we can apply the same argument above (by switching the role of $N_i$ and $N_{i+1}$) and conclude that $L_i$ and $L_{i+1}$ are Legendrian isotopic.
 
  The second case is $s_i, s_{i+1} \geq -1$. In this case, the complements of each $N_i$ and $N_{i+1}$ contains $S_{-p/q}(s_1)$ since $s_1 \leq -1$. Let $\overline{L}_i$ and $\overline{L}_{i+1}$ be the Legendrian representatives of the core of each $S_{-p/q}(s_1)$ contained in the complements of $N_i$ and $N_{i+1}$, respectively, with the maximum twisting number. Notice that standard neighborhoods of $\overline{L}_i$ and $\overline{L}_{i+1}$ are $S_{-p/q}(s_1)$. Assume that $N_{i+1}$ is obtained by attaching a bypass to $\partial N_i$ which is contained in $N_i$. Then $N_i$ contains $N_{i+1}$ and this implies that the complement of $N_{i+1}$ contains both $\overline{L}_i$ and $\overline{L}_{i+1}$. Also notice that $\overline{L}_i$ and $\overline{L}_{i+1}$ still have the maximum twisting number in the complement of $N_{i+1}$. If not, they destabilize and standard neighborhoods of them are $S_{-p/q}(s)$ where $s$ is clockwise of $s_1$ and there is an edge between $s$ and $-p/q$ in the Farey graph. This implies that $s < -p/q$ or $s = \infty$, and a non-minimally twisting $T^2 \times I$ layer embeds in $(L(p,q),\xi)$, which contradicts the tightness of $\xi$. Now $\overline{L}_i$ and $\overline{L}_{i+1}$ are Legendrian isotopic by Proposition~\ref{prop:core-any}. Thus after Legendrian isotopy, we can identify $\overline{L}_i$ with $\overline{L}_{i+1}$, and then both $L_i$ and $L_{i+1}$ are contained in the complement of a standard neighborhood of $\overline{L}_i$. By Proposition~\ref{prop:core-any} again, $L_i$ and $L_{i+1}$ are Legendrian isotopic. Now assume that $N_{i+1}$ is obtained by attaching a bypass to $\partial N_i$ which is not contained in $N_i$. Then $N_{i+1}$ contains $N_i$ and this implies that the complement of $N_i$ contains both $\overline{L}_i$ and $\overline{L}_{i+1}$. Thus we can apply the same argument above (by switching the role of $N_i$ and $N_{i+1}$) and conclude that $L_i$ and $L_{i+1}$ are Legendrian isotopic. This completes the claim.
  
  By Theorem~\ref{thm:unknot-coarse}, a Legendrian representative $L$ of $K$ with $\tb_{\mathbb{Q}}(L) < \tbb_{\mathbb{Q}}(K)$ destabilizes. 

  The identical argument works for $-K_1$ and $\pm K_2$. We leave them as exercises for the reader. 
\end{proof}

\begin{remark}
  Notice that in the proof of Proposition~\ref{prop:unknot}, we need Proposition~\ref{prop:core-any}, not just Proposition~\ref{prop:core-univ} even for the universally tight contact structures on $L(p,q)$. This is because there exist virtually overtwisted neighborhoods of $K$ even in the universally tight contact structures.
\end{remark}

\begin{proof}[Proof of Theorem~\ref{thm:unknot-Legendrian}]
  The theorem immediately follows from Theorem~\ref{thm:unknot-coarse} and Proposition~\ref{prop:unknot}.
\end{proof}

\begin{proof}[Proof of Theorem~\ref{thm:unknot-transverse}]
  In \cite[Proof of Theorem~2.10]{EH:transverse}, Etnyre and Honda showed that the classification of transverse knots is equivalent to the classification of Legendrian knots up to negative stabilization. Thus the theorem immediately follows from Theorem~\ref{thm:unknot-Legendrian}.
\end{proof}

%%%%%%%%%%%%%%%%%%%%%%%%%%%%%%%%%%%%%%%%%%%%%%%%%%%%%%%%%%%%%%%%%%%%%%%%%%%%%%%%%%%%%%%%%%%%%%%%%
\section{The contact mapping class group of the standard lens spaces} \label{sec:cmcg-lens}
%%%%%%%%%%%%%%%%%%%%%%%%%%%%%%%%%%%%%%%%%%%%%%%%%%%%%%%%%%%%%%%%%%%%%%%%%%%%%%%%%%%%%%%%%%%%%%%%%

In this section, we use the results from the previous sections to prove Theorem~\ref{thm:cmcg-lens}, and Corollary~\ref{cor:inclusion} and~\ref{cor:cont0}. We also prove Theorem~\ref{thm:cmcg-s1s2} using the results of Ding and Geiges \cite{DG:S1xS2}, see Section~\ref{sec:mcg-lens}.   

\begin{proof}[Proof of Theorem~\ref{thm:cmcg-lens}] 
  Recall from Section~\ref{sec:lens} that the standard contact structure $\xi_{std}$ on $L(p,q)$ can be decomposed into $S_{-p/q}(-1) \cup S^0(-1)$, and the contact structure $\xi_{std}$ restricted to $S^0(-1)$ is universally tight by Theorem~\ref{thm:classification-s1d2}. Also, the contact structure $\xi_{std}$ restricted to $S_{-p/q}(-1)$ is universally tight by Theorem~\ref{thm:classification-lens}. Let $L_1$ be a Legendrian representative of $K_1$ with $\tbb_{\mathbb{Q}}(K_1)$. By Lemma~\ref{lem:nbhd}, a standard neighborhood of $L_1$ is $S^0(-1)$. 
  
  Recall from Section~\ref{sec:lens} that if $q^2 \equiv 1 \pmod p$, then $\sigma$ is a contactomorphism of the standard contact structure $\xi_{std}$ on $L(p,q)$. Also, if $q \equiv -1 \pmod p$, then $\tau$ is smoothly isotopic to a contactomorphism $\overline{\tau}$ of $\xi_{std}$ on $L(p,q)$. If $q \not\equiv -1 \pmod p$, then there exist two standard contact structure $\xi_{std}^\pm$ on $L(p,q)$ and $\tau_*$ sends one to the other. 
  \begin{claim}\label{claim:contacto}
    Any contactomorphism $f \in \Cont(L(p,q),\xi_{std})$ is contact isotopic to either $\sigma$, $\overline{\tau}$, or the identity.
  \end{claim} 
  To prove the claim, it is enough to show that that the natural inclusion 
  \[
    i_*\colon \pi_0(\Cont(L(p,q), \xi_{std})) \to \pi_0(\Diff_+(L(p,q)))
  \] 
  is injective. Suppose $f$ is smoothly isotopic to the identity. Since $f$ is a contactomorphism, $f$ sends a Legendrian representative of $K_1$ with $\tbb_{\mathbb{Q}}(K_1)$ to the one with $\tbb_{\mathbb{Q}}(K_1)$. Then by Proposition~\ref{prop:unknot} (or Theorem~\ref{thm:unknot-Legendrian}), $f(L_1)$ is Legendrian isotopic to $L_1$. By the contact isotopy extension theorem \cite[Theorem~2.6.2]{Geiges:book}, we can assume $f$ fixes $L_1$. Moreover, by Lemma~\ref{lem:fix}, we can further assume that $f$ fixes a standard neighborhood $S^0(-1)$ of $L_1$. As discussed above, the contact structure $\xi_{std}$ restricted to the complement of $S^0(-1)$, which is $S_{-p/q}(-1)$, is universally tight. Thus by Theorem~\ref{thm:cmcg-s1d2}, $f|_{S_{-p/q}(-1)}$ is contact isotopic to the identity relative to the boundary. Since $f|_{S^0(-1)}$ is the identity, $f$ is contact isotopic to the identity. This completes the proof of the claim. 

  Now assume $p=2$. Then $\pi_0(\Diff_+(L(p,q)))$ is trivial by Theorem~\ref{thm:mcg-lens}. In this case, any contactomorphism $f$ is contact isotopic to the identity, so we have
  \[
    \pi_0(\Cont(L(p,q),\xi_{std})) = 1. 
  \] 
  
  If $p\neq2$ and $q\equiv-1 \pmod p$, then $\pi_0(\Diff_+(L(p,q)))$ is generated by $\sigma$ by Theorem~\ref{thm:mcg-lens}. In this case, any contactomorphism $f$ is contact isotopic to $\sigma$ or the identity, so we have
  \[
    \pi_0(\Cont(L(p,q),\xi_{std})) = \mathbb{Z}_2,
  \] 
  generated by $\sigma$. 
  
  If $p\neq2$ and $q\equiv1 \pmod p$, then $\pi_0(\Diff_+(L(p,q)))$ is generated by $\tau$ by Theorem~\ref{thm:mcg-lens}. By Claim~\ref{claim:contacto}, we have
  \[
    \pi_0(\Cont(L(p,q),\xi_{std}))=1.
  \] 
 
  If $p\neq2$, $q\not\equiv\pm1$ and $q^2 \equiv 1 \pmod p$, then $\pi_0(\Diff_+(L(p,q)))$ is generated by $\sigma$ and $\tau$ by Theorem~\ref{thm:mcg-lens}. Again, any diffeomorphism $f$ which is smoothly isotopic to $\tau$ cannot be a contactomorphism of $\xi_{std}$ by Claim~\ref{claim:contacto}. Thus we have 
  \[
    \pi_0(\Cont(L(p,q),\xi_{std}))=\mathbb{Z}_2,
  \] 
  generated by $\sigma$. 
  
  Finally, if $q^2 \not\equiv 1 \pmod p$, then $\pi_0(\Diff_+(L(p,q)))$ is generated by $\tau$ by Theorem~\ref{thm:mcg-lens}. Again, any diffeomorphism $f$ smoothly isotopic to $\tau$ cannot be a contactomorphism of $\xi_{std}$ by Claim~\ref{claim:contacto}. Thus we have 
  \[
    \pi_0(\Cont(L(p,q),\xi_{std}))=1.
  \]
  This completes the proof.
\end{proof}

\begin{proof}[Proof of Corollary~\ref{cor:inclusion}]
  In the proof of Theorem~\ref{thm:cmcg-lens}, we showed that any contactomorphism $f \in \Cont(L(p,q),\xi_{std})$ is contact isotopic to either $\sigma$, $\overline\tau$, or the identity. This proves the injectivity of $i_*$ since $\pi_0(\Diff_+(L(p,q)))$ is generated by $\sigma$ and $\tau$.

  Also, in the proof of Theorem~\ref{thm:cmcg-lens}, we showed that any diffeomorphism which is smoothly isotopic to $\tau$ cannot be a contactomorphism of $\xi_{std}$ when $q \not\equiv -1 \pmod p$. Moreover, $\tau$ is not isotopic to the identity if $p\neq2$ by Theorem~\ref{thm:mcg-lens-kernel}. This implies that $i_*$ is not surjective when $q \not\equiv -1 \pmod p$. 
  
  Finally, in the proof of Theorem~\ref{thm:cmcg-lens}, we showed that both $\pi_0(\Cont(L(p,q),\xi_{std}))$ and $\pi_0(\Diff_+(L(p,q)))$ are generated by $\sigma$ if $q\equiv-1$, so $i_*$ is surjective.
\end{proof}

\begin{proof}[Proof of Theorem~\ref{thm:cmcg-s1s2}]
  Suppose $L$ is a Legendrian representative of the positively oriented core of $S^1 \times S^2$ with $\rot(L)=0$. First, observe that $\delta^m$ is not contact isotopic to $\delta^n$ if $m \neq n$ since $\rot(\delta^m(L)) \neq \rot(\delta^n(L))$ by Lemma~\ref{lem:rot-s1s2}. Let $f$ be a contactomorphism of $\xi_{std}$ on $S^1 \times S^2$. We will show that $\delta$ and $\eta$ commute, and $f$ is contact isotopic to $\delta^m \circ \eta^i$ for some $m \in \mathbb{Z}$ and $i \in \mathbb{Z}_2$. Then a map 
  \begin{align*}
    \Phi: \mathbb{Z} \oplus \mathbb{Z}_2 &\to \pi_0(\Cont(S^1 \times S^2), \xi_{std}),\\
    (m,i) &\mapsto \delta^m\circ\eta^i  
  \end{align*} 
  is a well-defined homomorphism and clearly it is an isomorphism. Suppose $n := \rot(f(L))$. There are two cases we need to consider according to the orientation of $f(L)$.

  Suppose $f(L)$ is smoothly isotopic to $L$. According to Lemma~\ref{lem:rot-s1s2}, $\delta$ increases the rotation number of $f(L)$ by $1$, so we have 
  \[
    \rot((\delta^{-n} \circ f)(L)) = 0.
  \]
  Thus $(\delta^{-n} \circ f)(L)$ is Legendrian isotopic to $L$ by Theorem~\ref{thm:unknot-s1s2}. Moreover, by Lemma~\ref{lem:fix}, we can assume that $\delta^{-n} \circ f$ fixes a standard neighborhood $N$ of $L$. Ding and Geiges \cite{DG:S1xS2} showed that the complement of $N$ in the standard contact structure on $S^1 \times S^2$ is a solid torus with two longitudinal dividing curves. Now by Theorem~\ref{thm:cmcg-s1d2}, the restriction of $\delta^{-n} \circ f$ to the complement of $N$ is contact isotopic to the identity. Thus $\delta^{-n} \circ f$ is contact isotopic to the identity and hence $f$ is contact isotopic to $\delta^n$. 
  
  Now suppose $f(L)$ is smoothly isotopic to $-L$. Then $(\eta^{-1} \circ \delta^n \circ f)(L)$ is smoothly isotopic to $L$ and by Lemma~\ref{lem:rot-s1s2} we have
  \[
    \rot((\eta^{-1} \circ \delta^n \circ f)(L)) = 0.
  \] 
  Thus by Theorem~\ref{thm:unknot-s1s2}, $(\eta^{-1} \circ \delta^n \circ f)(L)$ is Legendrian isotopic to $L$ and by Lemma~\ref{lem:fix}, we can assume that $\eta^{-1} \circ \delta^n \circ f$ fixes a neighborhood of $L$. Again, $\eta^{-1} \circ \delta^n \circ f$ is contact isotopic to the identity by the same argument above. Thus $f$ is contact isotopic to $\delta^{-n} \circ \eta$. 
  
  Finally, consider a contactomorphism $\delta^{-1} \circ \eta^{-1} \circ \delta \circ \eta$. By Lemma~\ref{lem:rot-s1s2}, we have
  \[
    \rot((\delta^{-1} \circ \eta^{-1} \circ \delta \circ \eta)(L)) = 0
  \]
  and $(\delta^{-1} \circ \eta^{-1} \circ \delta \circ \eta)(L)$ is smoothly isotopic to $L$. By Theorem~\ref{thm:unknot-s1s2}, $(\delta^{-1} \circ \eta^{-1} \circ \delta \circ \eta)(L)$ is Legendrian isotopic to $L$. Applying the argument above, we can show $\delta \circ \eta$ is contact isotopic to $\eta \circ \delta$. This completes the proof.   
\end{proof}

\begin{proof}[Proof of Corollary~\ref{cor:cont0}]
  Notice that $\Cont_0(M,\xi) = \ker i$, where $i:\Cont(M,\xi) \to \Diff_+(M)$ is the natural inclusion. Now the corollary is immediate from the injectivity of $i_*$ from Corollary~\ref{cor:inclusion}.
\end{proof}

\bibliography{references}
\bibliographystyle{plain}
\end{document}